\documentclass[11pt,a4paper]{article}

\usepackage{amsfonts,amsgen,amsmath,amstext,amsbsy,amsopn,amsthm,amssymb
}
\usepackage{color}
\usepackage{multirow}
\usepackage[pdfencoding=auto, psdextra]{hyperref}
\usepackage[noabbrev,capitalise]{cleveref}
\usepackage[title]{appendix}

\addtolength{\topmargin}{-.6in} \addtolength{\textheight}{1.4in}
\addtolength{\oddsidemargin}{-.7in}
\addtolength{\evensidemargin}{-.7in} \addtolength{\textwidth}{1.6in}

\newtheorem{dfn}{Definition} [section]

\newtheorem{thm}[dfn]{Theorem}
\newtheorem{lem}[dfn]{Lemma}

\newtheorem{claim}[dfn]{Claim}

\renewcommand{\d}{{\rho}}
\renewcommand{\Im}{\operatorname{Im}}

\newcommand{\SK}{{S_{k,\ell}}}

\newcommand{\eps}{\varepsilon}
\newcommand{\E}{\mathbb{E}}

\newcommand{\fs}[2]{\left(\frac{#1}{#2}\right)}
\newcommand{\s}[1]{\left(#1\right)}
\newcommand{\mc}[1]{\mathcal{#1}}

\newcommand{\bb}[1]{\mathbb{#1}}

\newcommand{\brm}[1]{\operatorname{#1}}

\newcommand{\x}{\alpha} 
\newcommand{\y}{\beta} 
\renewcommand{\Im}{\mathrm{Im}}

\renewcommand{\l}{\ell}


\begin{document}

\title{\bf\Large The inducibility of oriented stars}

\author{Ping Hu\thanks{School of Mathematics, Sun Yat-sen University, Guangzhou, 510275, China. Email: {\tt huping9@mail.sysu.edu.cn.} Supported in part by National Natural Science Foundation of China grants 11801593 and 11931002.}
\and
Jie Ma\thanks{School of Mathematical Sciences, University of Science and Technology of China, Hefei 230026, China. Email:{\tt
	jiema@ustc.edu.cn.} Supported in part by National Natural Science Foundation of China grant 11622110 and Anhui Initiative in Quantum information Technologies grant AHY150200.}
\and
Sergey Norin\thanks{Department of Mathematics and Statistics, McGill University, Montr\'eal, Canada. Email: {\tt sergey.norin@mcgill.ca}. Supported by an NSERC Discovery grant.}
\and
Hehui Wu\thanks{Shanghai Center for Mathematical Sciences, Shanghai, China. E-mail:{\tt hhwu@fudan.edu.cn}. Supported in part by National Natural Science Foundation of China grant 11931006 and the Shanghai Shuguang Scholar Program grant 19SG01.}}

\maketitle

\begin{abstract}
	We consider the problem of  maximizing the number of induced copies of an oriented star $S_{k,\l}$ in digraphs of given size,
	where the center of the star has out-degree $k$ and in-degree $\ell$.
	The case $k\l=0$ was solved by Huang in \cite{Huang14}.
	Here, we asymptotically solve it for all other oriented stars with at least seven vertices. 
\end{abstract}



\section{Introduction}

A central problem in extremal graph theory is to determine the maximum number of induced copies of any given graph $H$ in graphs with fixed size.
This problem was first studied by Pippenger and Golumbic \cite{PG} and has been the subject of extensive research in recent years \cite{BHLP,BS,HHNBlowup,HT,KNV,Yuster19}.


Our focus in this paper is the analogous problem for digraphs.
To be precise, let $H$ be a digraph.
The {\it induced density} of $H$ in a digraph $G$, denoted by $i(H,G)$, is the number of induced copies of $H$ in $G$ divided by $\binom{|V(G)|}{|V(H)|}$.
For integers $n$, let $i(H,n)$ be the maximum of $i(H,G)$ over all $n$-vertex digraphs $G$.
The {\it inducibility} of $H$ is defined to be $i(H)=\lim_{n\to \infty} i(H,n)$. 
This limit exists as $i(H,n)$ is decreasing for $n \geq 2$.

There are very few digraphs for which the inducibility is known. One important class of  examples are directed stars. For nonnegative integers $k$ and $\ell$, let the \emph{oriented star} $S_{k,\ell}$ be  the digraph obtained by directing edges of a star with $k+\ell$ leaves so that center has out-degree $k$ and in-degree $\ell$. A \emph{directed star} is an oriented star in which all the edges have the same direction, i.e. the star $S_{k,\ell}$ such that $k=0$ or $\ell=0$.  The inducibility of $S_{2,0}$ and $S_{3,0}$ was determined by Falgas-Ravry and Vaughan~\cite{FV}. 
Resolving a conjecture made in \cite{FV}, Huang \cite{Huang14} extended their result and determined the inducibility of $S_{k,0}$ for all $k\geq 2$, showing that it is asymptotically attained by an unbalanced blow-up of an arc, 
iterated inside the part with in-degree $0$. Note that since the inducibility of any digraph equals the inducibility of the digraph obtained by reversing all arcs, it suffices to consider oriented stars $S_{k,\ell}$  such that $k \geq \ell$. In particular, Huang's result also determines inducibility of $S_{0,\ell}$ for all $\ell$.

The smallest oriented star not covered by the result of \cite{Huang14} is $S_{1,1}$, the directed path on three vertices. Thomass\'e~\cite[Conjecture 6.32]{CHSurvey} conjectured that $i(S_{1,1})=2/5$, which is attained by the iterated blowup of the directed cycle on four vertices.   

In this paper we determine the inducibility of the remaining oriented stars on at least seven vertices. The following is our main result, which reduces computations $i(\SK)$ to an optimization problem.

\begin{thm}\label{Thm:main}
	Let $k\geq \ell\geq 1$ be integers such that $k+\ell \geq 6$.
	Then when $k=\ell$,
	$$i(S_{k,\ell})=\frac{(2k+1)!}{2^{2k}(k!)^2}\cdot \max_{\alpha}\left\{\alpha(1-\alpha)^{2k}+(1-\alpha)\alpha^{2k}\right\};$$
	and when $k\geq \ell+1$, $i(S_{k,\ell})$ is equal to
	$$\frac{(k+\l+1)!}{k!\ell!}\max_{\alpha, d}\left\{\alpha(1-\alpha)^{k+\l}d^k(1-d)^\ell+\frac{(k-1)^{k-1}\l^\l}{(k+\ell-1)^{k+\l-1}}(1-\alpha)\alpha^{k+\l}(1-d)\right\},$$ 
	where the maximum is over all possible pairs $(\alpha,d)\in [0,\frac12]\times [0,\frac{k}{k+\ell}]$.
\end{thm}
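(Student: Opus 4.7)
The plan is to prove the theorem in two complementary stages: exhibit extremal constructions for the lower bound, and prove a matching upper bound via structural analysis of an extremal digraph.

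\emph{Lower bound.} The natural candidate for the extremal construction is a blow-up of the directed four-cycle $C_4$, with independent vertex classes $A,B,C,D$ in cyclic order. In the symmetric case $k=\ell$, taking $|A|=|C|=\alpha n/2$ and $|B|=|D|=(1-\alpha)n/2$ and counting the induced $S_{k,k}$ copies (the center in some class contributes $k$ out-leaves in the ``next'' class and $k$ in-leaves in the ``previous'' class) gives
\[
  \frac{(2k+1)!}{(k!)^2 \cdot 2^{2k}}\bigl(\alpha(1-\alpha)^{2k}+(1-\alpha)\alpha^{2k}\bigr),
\]
precisely matching the theorem. For $k>\ell$, the non-iterated $C_4$-blow-up with asymmetric sizes already produces the first term $\alpha(1-\alpha)^{k+\ell}d^k(1-d)^\ell$, but the non-standard second-term coefficient $C=(k-1)^{k-1}\ell^\ell/(k+\ell-1)^{k+\ell-1}$ (which equals $\max_p p^{k-1}(1-p)^\ell$) requires iterating the blow-up inside one of the parts (e.g., replacing $C$ by a recursive copy of the whole construction). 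Computing the inducibility of this fractal as a fixed point of a recursion yields the full two-parameter expression after identifying $\alpha$ and $d$ with the relative sizes of the outer parts.

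\emph{Upper bound.} Fix a digraph $G$ with $i(\SK,G)=i(\SK,n)$ and write the $\SK$-count as $\sum_v f(v)$, where $f(v)$ is the number of $(k+\ell)$-subsets $S\subseteq N^+(v)\cup N^-(v)$ that are pairwise non-adjacent in the underlying graph of $G$ with $|S\cap N^+(v)|=k$. A local edge-flipping argument should show that in an asymptotically extremal $G$, one may assume $N(v)$ is independent for every $v$, so that $f(v)=\binom{d^+(v)}{k}\binom{d^-(v)}{\ell}$ and the underlying graph is triangle-free. Within the class of triangle-free digraphs, a Zykov-type symmetrization combined with Jensen's inequality applied to $\binom{x}{k}\binom{y}{\ell}$ should show that the extremal $G$ is close to a blow-up of a bipartite digraph whose two sides refine further into the four parts of a $C_4$-blow-up, possibly with iteration inside one part. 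The final step collapses the configuration space to the two-parameter optimization over $(\alpha,d)$ in the stated ranges; the restriction $\alpha\in[0,\tfrac{1}{2}]$ reflects the $A\leftrightarrow C$ swap symmetry, while $d\in[0,\tfrac{k}{k+\ell}]$ reflects the location of the maximum of $d^k(1-d)^\ell$.

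\emph{Main obstacle.} The hardest point is matching the asymmetric coefficient $C$ on the upper-bound side: a naive uniform orientation between the two bipartite sides produces only $(1-d)^k d^\ell$ in place of $C(1-d)$, so a delicate stability / iteration analysis is required to detect the non-uniform orientation profile arising from the recursive inner structure. The hypothesis $k+\ell\geq 6$ ensures that the non-iterated $C_4$-blow-up contribution dominates the higher-order iterated corrections; for $k+\ell$ small, iteration beats the non-iterated construction by a larger margin (cf.\ Thomass\'e's conjecture for $S_{1,1}$ with $k+\ell=2$), and the techniques outlined above would not suffice in that regime.
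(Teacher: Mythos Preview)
Your proposal misidentifies both the extremal construction and the mechanism behind the coefficient $C=\frac{(k-1)^{k-1}\l^\l}{(k+\l-1)^{k+\l-1}}$, and the upper-bound strategy you sketch is not the one that works.

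\textbf{Lower bound.} For $k>\l$ the extremal digraphs are \emph{not} iterated blow-ups. They are simply orientations of a complete bipartite graph with parts $X$ and $Y=Y_1\cup Y_2$: all arcs between $X$ and $Y_2$ point toward $Y_2$, while each arc between $X$ and $Y_1$ is independently directed toward $Y_1$ with probability $\tfrac{\l}{k+\l-1}$. Then every $x\in X$ has out/in ratio $d/(1-d)$ toward $Y$ (giving the first term), and every $y\in Y_1$ has out/in ratio $\tfrac{k-1}{k+\l-1}:\tfrac{\l}{k+\l-1}$ toward $X$, which yields the second term. The constant $C$ is not a fractal fixed point: it is the slope of the tangent line from the origin to the curve $p\mapsto p^k(1-p)^\l$, touching at $p=\tfrac{k-1}{k+\l-1}$. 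The split $Y=Y_1\cup Y_2$ realizes the concave envelope of this function at the prescribed average $p=1-d$. Your iterated $C_4$-blow-up is a different (and more complicated) object, and the recursion you allude to does not produce the formula in the theorem.

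\textbf{Upper bound.} The argument in the paper does not proceed by making neighborhoods independent and then symmetrizing. The step ``a local edge-flipping argument should show that one may assume $N(v)$ is independent for every $v$'' is the crux, and it is exactly what fails to be routine: removing an edge inside $N(v)$ may create induced stars centered at \emph{other} vertices whose leaf sets previously hit that edge, so there is no monotonicity to exploit. The actual proof works in a probabilistic framework: one partitions $V(G)$ into high-degree vertices $X$ and low-degree vertices $Y$, splits the induced copies of $\SK$ into five types according to how the center and leaves distribute between $X$ and $Y$, and then shows analytically that the negative contribution of edges inside $X$ or $Y$ and of non-adjacencies between $X$ and $Y$ outweighs their positive contribution. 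Concavity of the envelope $f$ of $d\mapsto d^k(1-d)^\l$ and a collection of careful inequalities (this is where $k+\l\ge 6$ enters, purely as numerical slack) reduce everything to the two-parameter optimization. No Zykov symmetrization, and no assumption of triangle-freeness, is used or available.

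\textbf{On the role of $k+\l\ge 6$.} This hypothesis is not about iteration beating a flat construction; the extremal graphs are flat (non-iterated) in all cases covered by the theorem. The bound $k+\l\ge 6$ is needed only so that several explicit numerical inequalities in the upper-bound analysis hold. The cases $(k,\l)\in\{(2,2),(3,2)\}$ can be handled by tightening the same estimates, while $(k,1)$ for $k\le 4$ appears to require additional ideas (or flag algebras).
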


The lower bound on $i(S_{k,\ell})$ in \cref{Thm:main}  comes from considering  orientations of
large complete bipartite graph with bipartition $(X, Y)$ with $|X|/|Y|=\alpha/(1-\alpha)$ and $d|X||Y|$ arcs directed from $X$ to $Y$,
where  $\alpha = (1+o(1))\frac{1}{k+\ell+1}$, $d = (1+o(1))\frac{k}{k+\ell}$.\footnote{For a more detailed description of the extremal digraphs, we direct readers to \cref{l:lower}.} In fact, it follows from our proof  that for sufficiently large $n$ the graphs achieving $i(\SK, n)$ are the orientations of complete bipartite graphs with above properties. 

As discussed above, the optimal constructions for $\ell = 0$ and for $k = \ell = 1$ are different, and it is not hard to show that orientations of complete bipartite graphs can not achieve the inducibility in these cases.  However, we conjecture that \cref{Thm:main} holds in all the remaining cases. \footnote{Tightening some of our estimates, it is possible to extend \cref{Thm:main} to the cases $(k,\l)=(2,2)$ and $(k,\l)=(3,2)$, but we choose not to do so, as it involves no new ideas, and noticeably lengthens the (already extensive) calculations. 
On the other hand, solving the cases $(k,1)$ for $k\leq 4$ may require some new ideas (see \cref{s:remarks} for more discussion).}

The rest of the paper is structured as follows. 
In Section \ref{section:Pre}, we outline the proof of \cref{Thm:main}, introduce the necessary terminology, including the probabilistic notation that plays a crucial role in the proofs, and establish some basic inequalities.
In Section \ref{s:lower}, we prove the stated lower bound on $i(\SK)$ and reduce the proof of the upper bound to \cref{l:upper}.  We then prove \cref{l:upper} in \cref{s:upper}, while the proofs of some of the more technical estimates are relegated to Appendix \ref{sec:claim}. 
Finally, in
\cref{s:remarks}, we make some concluding remarks.

\section{Preliminaries}\label{section:Pre}

\subsection{Proof outline}

The techniques used to obtain majority of inducibility results can be loosely divided into three categories: 
\begin{itemize}
	\item {\bf Razborov's flag algebras} allow one to automate the search for the proof. Their application, however, frequently requires computer assistance, and, as such, is typically more successful when investigating small (di)graphs. Flag algebras, in particular, were used by Hirst~\cite{Hirst} to determine the inducibility of the graphs obtained from a triangle by adding a vertex of degree one or two, and by Falgas-Ravry and Vaughan~\cite{FV} to determine the inducibility of $S_{2,0}$ and $S_{3,0}$.
 \item {\bf Stability method} was introduced by Simonovits~\cite{Sim68}. In the context of inducibility operates as follows. First, one establishes that if the induced density of a  (di)graph $H$ in a (di)graph $G$ is close to the maximum then $G$ is close to the conjectured family of extremal examples $\mc{F}$.  Second, one locally modifies $G$, arguing that the induced density increases in the process, until reaching the graph that belongs to $\mc{F}$. This technique requires fine control on the error terms. As a result it is either paired with flag algebras ~\cite{BHLP,CLP20,PST}, or is applied to the cases when $H$ is very large~\cite{FSW,HHNBlowup}.
 \item{\bf Analytical methods} are applied by establishing a bound on the inducibility as a function of local parameters (e.g.\@ degrees) and optimizing this bound after relaxing the integrality requirement. Huang's result~\cite{Huang14} on the inducibility of directed stars is an example of applications of such methods, which were also used in~\cite{BS,KNV}. Success of analytic methods seems to depend on the presence symmetries in $H$, in particular in all of the above applications  the (di)graph  $H$ is edge- (or arc-)transitive. 
\end{itemize}

One of the goals of this paper is addressing the technical challenge of bridging the gap between small digraphs, which can be attacked using flag algebras, and large digraphs, which are susceptible to stability method, and doing so in the presence of only moderate symmetries.  

\medskip

The large scale structure of our proof of \cref{Thm:main} is motivated by the stability method. More precisely, we partition the vertices of $G$ into a set $X$ of large degree vertices and a set $Y$ of small degree vertices, and attempt to show that the induced density is maximized when $X$ and $Y$ are independent and complete to each other. 
However, rather than sequentially making local modifications, we employ analytical techniques and show that global contribution of edges in $X$ and $Y$ and non-adjacencies between $X$ and $Y$ is negative, while we simultaneously optimize the degrees of vertices in $X$ and $Y$ to establish the claimed upper bound. As a result we are able to reduce the size of the stars for which this approach is feasible to the point that the remaining cases are within reach of the flag algebras.

Our approach overcomes the following issue that makes the direct application of stability method difficult, which occurs when $\l=1$. Consider a digraph $G$ on $n$ vertices, obtained by ``blowing up'' the oriented star $S_{k,1}$ in the following iterated fashion. The vertex set is partitioned into three sets $X$, $Y$ and $Z$, where $|X|=|Z|=(1+o(1))\frac{n}{k+2}$ and $|Y|=(1+o(1))\frac{k}{k+2}n$, the edge set $E(G)$ includes all the arcs from $X$ to $Y$ and from $Y$ to $Z$, and this construction is iterated within $G[X]$ and $G[Y]$. The resulting digraph is not an orientation of a complete bipartite graph, as we would like it to be, yet the induced  density of $S_{k,1}$ in it is very close to the maximum, and  it is locally optimal, i.e., it is impossible to increase induced density by changing the arcs incident to a single vertex. The existence of this example makes analysis in the case $\l=1$ substantially more complicated, although we never explicitly reference it in the course of the proof.

We use the probabilistic notation introduced in the next subsection to make our calculations more intuitive. That is, instead of counting the copies of $\SK$ in $G$, we bound the probability that a random map from $V(\SK)$ to $V(G)$ induces such a copy.

\subsection{Probabilistic point of view}

For the remainder of the paper we fix integers $k \geq \l \geq 1$ such that $k + \l \geq 6$.

Let $G$ be an $n$-vertex digraph.
We consider the set $\Phi=\Phi_{k,\l}$ of all maps $\phi: V(S_{k,\l})\to V(G)$ with the uniform probability distribution on them. 
Let $\mathcal{S}$ be the set of maps $\phi \in \Phi$ such that $\phi$ is an isomorphism between $\SK$ and $G[
\operatorname{Image}(\phi)]$,
and let $$s(G)=s_{{k,\l}}(G)=\Pr[\phi \in \mathcal{S}].$$ Then we have \begin{equation}\label{e:s} s(G)=\frac{k!\l!}{n^{k+\l+1}}\binom{n}{k+\l+1}\cdot i(S_{k,\l},G).
\end{equation}Thus  if $G$  maximizes the induced density of $\SK$ among all $n$-vertex digraphs, then it also maximizes  $s(G)$. We find it convenient to take the probabilistic approach and estimate $s(G)$ rather than $i(S_{k,\l},G)$, which also allows us to largely ignore the dependence on $n$.

Let \begin{align*}\brm{OPT}(k,\l) &=  \frac{1}{2^{2k}}\cdot \max_{\alpha}\left\{\alpha(1-\alpha)^{2k}+(1-\alpha)\alpha^{2k}\right\}  \; \mbox{if} \; k=\l, \; \mbox{and}\\ 
\brm{OPT}(k,\l) &= \max_{\alpha, d}\left\{\alpha(1-\alpha)^{k+\l}d^k(1-d)^\ell+\frac{(k-1)^{k-1}\l^\l}{(k+\ell-1)^{k+\l-1}}(1-\alpha)\alpha^{k+\l}(1-d)\right\} \; \mbox{if} \; k>\l.\end{align*}

Thus by \eqref{e:s} \cref{Thm:main} is implied by the following.

\begin{thm}\label{t:main2} Let $k \geq \ell \geq 1$ be integers such that $k+\ell \geq 6$. Then
	$$ \sup_G s_{k,\l}(G) = \brm{OPT}(k,\l).\footnote{We remark that one can show $\sup_G s_{k,\l}(G) =\limsup_G s_{k,\l}(G)$.}$$ 
\end{thm}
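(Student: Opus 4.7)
My plan is to prove the two matching inequalities $\sup_G s_{k,\l}(G) \ge \brm{OPT}(k,\l)$ and $\sup_G s_{k,\l}(G) \le \brm{OPT}(k,\l)$ separately, following the architecture outlined in \cref{section:Pre}: a constructive lower bound, plus an analytic upper bound passing through a structural decomposition of a near-extremal digraph.

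\textbf{Lower bound.} For each admissible pair $(\alpha,d)$ I would exhibit a sequence of $n$-vertex digraphs $G_n$ with $s(G_n)$ tending to the value of the $\brm{OPT}(k,\l)$ objective at $(\alpha,d)$. The natural candidate is an orientation of a large complete bipartite graph with parts $X,Y$ of relative sizes $\alpha, 1-\alpha$, in which each $X\text{--}Y$ arc is directed $X\to Y$ with probability $d$, possibly iterated inside one part to produce the second summand of $\brm{OPT}(k,\l)$. Expanding $s(G_n)=\Pr[\phi\in\mathcal{S}]$ by conditioning on where the center $\phi(v_0)$ and each of the $k+\l$ leaves land, the two summands of $\brm{OPT}(k,\l)$ appear from two disjoint placements of $\phi$: the first from a uniform bipartite orientation, the second from the iterated piece with internal density equal to the maximizer $\tfrac{k-1}{k+\l-1}$ of $d'^{k-1}(1-d')^\l$ (whose maximum value is precisely $\tfrac{(k-1)^{k-1}\l^\l}{(k+\l-1)^{k+\l-1}}$); the $k=\l$ case collapses symmetrically with no iteration needed.

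\textbf{Upper bound.} For the upper bound I would fix a large digraph $G$ and partition $V(G)=X\cup Y$, where $X$ is the set of vertices of large total degree (close to $n$) and $Y$ the rest. Let $\alpha=|X|/n$ and let $d$ be the average $X\to Y$ density across the bipartition. I would then write $s(G)$ as a sum over the placements of $\phi$, isolating a \emph{clean} contribution --- $\phi$ sends the center to one part and all $k+\l$ leaves to the other, with arcs correctly oriented --- which is bounded term-by-term by the $\brm{OPT}$-style expression. Every other contribution involves at least one \emph{defect}: an arc inside $X$, an arc inside $Y$, a non-arc between $X$ and $Y$, or a misplaced center relative to its leaves. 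The key step, corresponding to \cref{l:upper}, is to show that the aggregate contribution of all defects is nonpositive. I would prove this by a vertex-by-vertex accounting that pits each defect against the many copies of $S_{k,\l}$ it destroys, and then by AM--GM or convexity reduce the bound to the two-variable deterministic optimization defining $\brm{OPT}(k,\l)$.

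\textbf{Main obstacle.} The principal difficulty, as the introduction warns, is the case $\l=1$: the iterated blow-up of $S_{k,1}$ described there is locally optimal but globally sub-extremal, so any argument proceeding by sequential one-vertex modifications is doomed. Consequently the defect estimate in \cref{l:upper} must be genuinely global, simultaneously absorbing intra-$X$ arcs, intra-$Y$ arcs, and $X\text{--}Y$ non-arcs while allowing the parameters $\alpha,d$ to drift toward their optimal values. The assumption $k+\l\ge 6$ should enter precisely here, providing enough multiplicative slack between the leading $\alpha(1-\alpha)^{k+\l}$ term and the error terms to survive the defect accounting; for smaller $k+\l$ either tighter estimates (as the authors remark for $(2,2)$ and $(3,2)$) or full flag-algebra input would be required.
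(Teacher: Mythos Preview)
Your plan matches the paper's architecture almost exactly: a bipartite-orientation construction for the lower bound, and for the upper bound a high-/low-degree split $X\cup Y$ followed by a decomposition of $\Pr[\phi\in\mc{S}]$ by the type of the map $\phi$ (center in $X$ with leaves in $Y$, center in $Y$ with leaves in $X$, maps hitting an $X$-edge, a $Y$-edge, or an $X\text{--}Y$ non-adjacency), with an analytic argument showing the defect types are dominated by the losses they cause in the clean types. Two points deserve correction. First, no iteration is needed for the lower bound: the paper's construction is a single complete bipartite digraph whose $Y$-side is split into $Y_1,Y_2$, with arcs into $Y_2$ all oriented $X\to Y_2$ and arcs into $Y_1$ oriented $X\to Y_1$ with probability $\tfrac{\l}{k+\l-1}$; the second summand of $\brm{OPT}(k,\l)$ then counts stars with center in $Y_1$ and all leaves in $X$, and the factor $\tfrac{(k-1)^{k-1}\l^\l}{(k+\l-1)^{k+\l-1}}$ arises because each $y\in Y_1$ has out/in-degree ratio $(k-1):\l$ into $X$, not from any iterated piece. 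Second, and more important, you are missing the standard but essential reduction that precedes the defect analysis: before partitioning, the paper passes (via a Zykov-style vertex-cloning argument) to a digraph in which \emph{every} vertex $v$ satisfies $(m+1)s(v)\ge (m+1)\brm{OPT}(k,\l)=:S$. This uniform lower bound $S$ is the input to all of the parameter estimates (on $D=\min_{x\in X}\rho(x)$, on $\alpha$, on $\beta=\max_{y\in Y}\rho_Y(y)$, on $\gamma$), and without it the defect accounting cannot get off the ground; ``$G$ is near-extremal'' alone does not supply it.
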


\subsection{Notation}

We write $xy$ for an arc in a digraph $G$ with the head $y$ and the tail $x$ (when the direction is not essential, we also call it an edge in $G$).

Let $A$ be a subset of $V(G)$. Let $\mu(A) = |A|/n$ denote the proportion of vertices in $G$ that lie in $A$.
We denote by $N^+_A(x)$ (respectively $N^-_A(x)$)  the set of all out-neighbors (respectively in-neighbors)
of $x$ in $G$ contained in $A$, and let $N_A(x)=N^+_A(x)\cup N^-_A(x)$. Let
\begin{align*} \d^+_A(x)=|N^+_A(x)|/n, ~~ \d_A(x)=|N_A(x)|/n, ~~ \d^-_A(x)=|N^-_A(x)|/n \mathrm{~~~and~~~   } \d^0_A(x)= \mu(A) - \d_A(x).\end{align*} 
Thus $\d^0_A(x)$ denotes the proportion of vertices which lie in $A$ and are non-adjacent to $x$.
In the case of $A=V(G)$ we  omit the subscript.

As we investigate maps in $\Phi$ it is convenient to fix notations for vertices of $\SK$. Let $c$ denote the center of $S_{k,\l}$, and let $I$ and $O$ be the sets of in-leaves and out-leaves of $S_{k,\l}$, respectively.
We fix particular choices of $i \in I$ and $o \in O$.

For a vertex $v\in V(G)$, and $z \in V(\SK)$, let $$s(z \to v) = \Pr\left[\phi \in \mathcal{S}|\phi(z)=v\right].\footnote{We will omit the dependence on $G$ in our notation, as the choice of $G$ will be fixed throughout the bulk of the proof.}$$ 
Given $U\subseteq V(G)$, let $$s(U)=\Pr[\phi \in \mathcal{S}|U\subseteq \textrm{Im}(\phi)].$$
Typically, we list vertices of $U$ and omit the brackets for brevity. For example, $s(v)=s(\{v\})$ and $s(u,v)=s(\{u,v\})$.

Note that by the law of total probability, we have
\begin{align}\label{eq:sgv}
s(v)=\frac{s(c\to v)+\l\cdot s(i\to v)+k\cdot s(o\to v)}{k+\l+1}
\end{align}
and
\begin{align}\label{eq:sG}
s(G)=\sum_{v\in V(G)}\frac{s(v)}{n}=\sum_{(u,v)\in V(G)}\frac{s(u,v)}{n^2} =\sum_{v\in V(G)}\frac{s(z\to v)}{n},
\end{align}
where the last equality holds for any choice of $z \in V(\SK)$.

\subsection{Basic inequalities}

In this subsection we collect the basic inequalities and analytic tools we use throughout the paper. 

\begin{lem}\label{l:xayb}
 For  reals $a, b>0, x, y \geq 0$, we have
	\begin{align}\label{e:xayb}
x^ay^b\leq \frac{a^ab^b}{(a+b)^{a+b}}(x+y)^{a+b}.
	\end{align}
\end{lem}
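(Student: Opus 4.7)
The statement is the standard weighted AM-GM inequality, so the plan is just to reduce to a single-variable optimization and verify.

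First I would dispose of the degenerate boundary. If $x=0$ or $y=0$, then since $a,b>0$ the left-hand side is $0$, while the right-hand side is nonnegative, so the inequality holds trivially. Thus I can assume $x,y>0$ and, in particular, $x+y>0$.

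Next I would homogenize: dividing both sides by $(x+y)^{a+b}$ and setting $t=x/(x+y)\in(0,1)$, the inequality becomes
\[
t^a(1-t)^b \;\le\; \frac{a^a b^b}{(a+b)^{a+b}}.
\]
So it suffices to maximize $f(t)=t^a(1-t)^b$ on $[0,1]$. A direct computation gives
\[
f'(t)=t^{a-1}(1-t)^{b-1}\bigl(a(1-t)-bt\bigr),
\]
which vanishes on the interior of $[0,1]$ only at $t^{\ast}=a/(a+b)$. Since $f(0)=f(1)=0$ and $f$ is positive on $(0,1)$, this unique critical point is the global maximum, and evaluating gives $f(t^{\ast})=a^a b^b/(a+b)^{a+b}$, as required. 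Tracing back the substitution yields \eqref{e:xayb}, and also shows that equality holds exactly when $x/y=a/b$ (together with the trivial cases on the boundary).

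There is no real obstacle here; the only mild nuisance is keeping track of the boundary cases where $x$ or $y$ is zero (so that one avoids writing $0^0$ when $a$ or $b$ is fractional). An equally short alternative would be to apply weighted AM-GM directly to $x^{a/(a+b)}y^{b/(a+b)}$ with weights $a/(a+b)$ and $b/(a+b)$ on the rescaled variables $x(a+b)/a$ and $y(a+b)/b$ and then raise to the power $a+b$; I would include whichever derivation is cleaner in context.
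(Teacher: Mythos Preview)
Your proof is correct. The paper's argument is a slight variant: after disposing of the boundary case $xy=0$ it takes logarithms and divides by $a+b$, reducing \eqref{e:xayb} to
\[
\frac{a}{a+b}\log\frac{x}{a}+\frac{b}{a+b}\log\frac{y}{b}\le \log\frac{x+y}{a+b},
\]
which is immediate from concavity of the logarithm. Your approach instead homogenizes to the single-variable maximization of $t^a(1-t)^b$ and locates the maximum by calculus. Both are standard; the paper's route is marginally shorter and avoids the derivative computation, while yours is slightly more elementary and also records the equality case.
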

\begin{proof}
	Clearly, \eqref{e:xayb} holds if $xy=0$, so we assume $x,y >0$.  Taking the logarithm and dividing by $a+b$, we transform \eqref{e:xayb} into
	$$\frac{a}{a+b}\log\fs{x}{a}+\frac{b}{a+b}\log\fs{y}{b} \leq \log\fs{x+y}{a+b},$$
	which holds by concavity of logarithm.
\end{proof}

\begin{lem}\label{l:fractions}
	Let $a,b >0$, $0 \leq x \leq a$, $0 \leq y \leq b$ be reals. Then
	\begin{equation}\label{e:subtract}
	(a-x)(b-y)  \leq ab \s{1 - \frac{x+y}{a+b}}.
	\end{equation}
\end{lem}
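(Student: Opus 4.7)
The plan is to reduce the inequality to a manifestly non-negative expression by direct algebraic manipulation, no fancy machinery required. After expanding $(a-x)(b-y) = ab - ay - bx + xy$ and rewriting the right-hand side as $ab - \frac{ab(x+y)}{a+b}$, the claim is equivalent to
\[
ay + bx - xy \;\geq\; \frac{ab(x+y)}{a+b}.
\]
Since $a+b>0$, we can clear the denominator by multiplying through, and the resulting polynomial inequality should simplify considerably: the term $ab(x+y)$ appears on both sides and cancels.

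What remains after this cancellation should be $a^2 y + b^2 x - (a+b)xy \geq 0$. The key observation is that this factors nicely as
\[
ay(a-x) + bx(b-y) \;\geq\; 0,
\]
and both summands are non-negative by the hypotheses $0 \leq x \leq a$ and $0 \leq y \leq b$, so we are done.

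I expect no real obstacle here. If one wanted a slicker presentation, the substitution $x = ap$, $y = bq$ with $p,q \in [0,1]$ normalizes the problem and yields the equivalent form $bp(1-q) + aq(1-p) \geq 0$, but the direct computation above is short enough that a change of variables adds little. Equality is attained precisely when $x=0$ and $y=0$, or when $x=a$ and $y=b$ (and trivially also when $x=0, y=b$ or $x=a, y=0$ forces $ay(a-x)+bx(b-y)=0$), which may be useful to note if sharp cases come up later in the paper but is not needed for the stated inequality.
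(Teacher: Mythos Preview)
Your proof is correct and is exactly the paper's approach: multiply through by $a+b$ and observe that the inequality reduces to $ay(a-x)+bx(b-y)\geq 0$, which is immediate from the hypotheses. One small slip in your parenthetical remark: the cases $x=0,\,y=b$ and $x=a,\,y=0$ do \emph{not} give equality (e.g.\ at $x=0,\,y=b$ the term $ay(a-x)=a^2b>0$); equality holds only at $(x,y)=(0,0)$ or $(a,b)$, though as you note this is irrelevant to the lemma itself.
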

\begin{proof}
	After multiplying by $a+b$, the inequality \eqref{e:subtract} reduces to $ay(a-x)+bx(b-y) \geq 0$, which holds by our assumptions.
\end{proof}

Let $$f(x)=\begin{cases} \frac{(k-1)^{k-1}\l^\l}{(k+\l-1)^{k+\l-1}}\cdot x, &\mbox{if} \qquad x \in \left[0,\frac{k-1}{k+\l-1}\right] \\
 x^k(1-x)^{\l}, &\mbox{if} \qquad x \in \left[\frac{k-1}{k+\l-1},\frac{k}{k+\l}\right]  \\
 \frac{k^k\l^\l}{(k+\l)^{k+\l}}, &\mbox{if} \qquad x \in \left[\frac{k}{k+\l},1\right]. \end{cases} $$

\begin{lem}\label{l:f}	The function $f$ is non-decreasing, concave and satisfies $f(x) \geq x^k(1-x)^{\l}$ on $[0,1]$.
\end{lem}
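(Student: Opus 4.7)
The plan is to verify each of the three claimed properties piece-by-piece, working with the two breakpoints $x_1 := (k-1)/(k+\ell-1)$ and $x_2 := k/(k+\ell)$. Writing $g(x) := x^k(1-x)^\ell$, I would first compute
$$g'(x) = x^{k-1}(1-x)^{\ell-1}\bigl[k - (k+\ell)x\bigr],$$
which immediately gives $g'(x_2) = 0$ and, by a short calculation using $1-x_1 = \ell/(k+\ell-1)$ and $k - (k+\ell)x_1 = \ell/(k+\ell-1)$, yields $g'(x_1) = \frac{(k-1)^{k-1}\ell^\ell}{(k+\ell-1)^{k+\ell-1}}$. Combined with the values $g(x_1) = \frac{(k-1)^k \ell^\ell}{(k+\ell-1)^{k+\ell}}$ and $g(x_2) = \frac{k^k \ell^\ell}{(k+\ell)^{k+\ell}}$, this shows that $f$ is continuous at both breakpoints and that its one-sided slopes agree there, so $f$ is in fact $C^1$ on $[0,1]$.

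For the inequality $f(x) \geq x^k(1-x)^\ell$, the middle piece is equality by definition, and on the right piece $f$ equals $\max_{[0,1]} g = g(x_2)$. On the left piece I would divide through by $x > 0$ and reduce the desired inequality to $x^{k-1}(1-x)^\ell \leq \frac{(k-1)^{k-1}\ell^\ell}{(k+\ell-1)^{k+\ell-1}}$, which in fact holds on all of $[0,1]$ because the left-hand side is a function of the same form as $g$ (with parameters $k-1$ and $\ell$) whose maximum over $[0,1]$ is attained precisely at $x_1$ with the claimed value. Monotonicity then follows from the positive slope of the linear piece, from the sign of $g'$ on $[0, x_2]$ as read off the factorization above, and from the right piece being constant.

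The only step requiring nontrivial computation is concavity of $g$ on $[x_1, x_2]$. I would use the logarithmic derivative to derive
$$g''(x) = \frac{g(x)}{x^2(1-x)^2}\,Q(x), \qquad Q(x) := (k+\ell)(k+\ell-1)x^2 - 2k(k+\ell-1)x + k(k-1),$$
and, since $Q$ is convex in $x$, it suffices to check $Q(x_1) \leq 0$ and $Q(x_2) \leq 0$. Direct substitution gives $Q(x_1) = -(k-1)\ell/(k+\ell-1)$ and $Q(x_2) = -k\ell/(k+\ell)$, both non-positive. Together with the $C^1$ matching of slopes at the breakpoints, this is enough: $f'$ is then continuous on $[0,1]$ and non-increasing on each of the three pieces, hence non-increasing globally, which gives concavity of $f$. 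The main (mild) obstacle is keeping the piecewise bookkeeping straight; once the quadratic $Q$ is identified, the rest is routine substitution.
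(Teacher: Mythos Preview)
Your proof is correct and follows essentially the same structure as the paper's: compute $g'$ to show $f$ is $C^1$, establish concavity of $g$ on $[x_1,x_2]$, and deduce the three claimed properties from there. The only minor technical differences are that you verify concavity by evaluating the quadratic $Q$ at the two endpoints (rather than locating its roots explicitly, as the paper does), and on the left piece you bound $x^{k-1}(1-x)^\ell$ directly by its maximum (the paper instead uses convexity of $g$ there); both variants are clean and arguably a bit more direct than the paper's versions.
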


\begin{proof} 
	Let $g(x)=x^k(1-x)^{\l}$, so $g'(x) = x^{k-1}(1-x)^{\l-1}(k-(k+\l)x)$.
	Since $g'(\frac{k-1}{k+\l-1}) =   \frac{(k-1)^{k-1}\l^\l}{(k+\l-1)^{k+\l-1}}$ and $g'(\frac{k}{k+\l}) = 0$, we know $f(x)$ is differentiable.
	As $g'(x)$ is decreasing on $[\frac{k}{k+\l} - \frac{\sqrt{k\l}}{k+\l}, \frac{k}{k+\l} + \frac{\sqrt{k\l}}{k+\l}]$, 
	with $\frac{k}{k+\l} - \frac{\sqrt{k\l}}{k+\l} \leq \frac{k-1}{k+\l-1}$ and $\frac{k}{k+\l} + \frac{\sqrt{k\l}}{k+\l} \geq \frac{k}{k+\l}$, 
	it follows that $f(x)$ is concave and non-decreasing. Then since $g(x)$ is maximized on $[0,1]$ at $x= \frac{k}{k+\l}$, we have
	$f(x)\geq g(x)$ for $x \in [\frac{k}{k+\l} - \frac{\sqrt{k\l}}{k+\l}, 1]$. As $f(0)=g(0)$ and $g(x)$ is convex  for $x \in [0, \frac{k}{k+\l} - \frac{\sqrt{k\l}}{k+\l}]$, it further follows that $f(x)\geq g(x)$  for  $x \in [0,\frac{k}{k+\l} - \frac{\sqrt{k\l}}{k+\l}]$.	 
\end{proof}

\begin{lem}\label{l:f2}	For $\x \leq 1/2$ the function $$ f_*(d) = \x(1-\x)^{k+\l} f(d) + \frac{(k-1)^{k-1}\l^\l}{(k+\l-1)^{k+\l-1}}(1-\x)\x^{k+\l}(1-d)$$
	achieves its maximum on $[0,1]$ for some $d \in  \left[\frac{k-1}{k+\l-1},\frac{k}{k+\l}\right]$.
\end{lem}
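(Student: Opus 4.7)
The plan is to exploit the piecewise definition of $f$: since $f$ is linear on $[0,\frac{k-1}{k+\l-1}]$, equals $d^k(1-d)^\l$ on the middle interval, and is constant on $[\frac{k}{k+\l},1]$, the function $f_*$ decomposes into three pieces that can each be analyzed separately. Setting $C = \frac{(k-1)^{k-1}\l^\l}{(k+\l-1)^{k+\l-1}}$ for brevity, I would show that on each of the two outer intervals the maximum of $f_*$ is attained at the endpoint adjacent to the middle interval; this immediately gives the claim.

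On $[0,\frac{k-1}{k+\l-1}]$ we have $f(d)=Cd$, so
\[
f_*(d) = C\x(1-\x)^{k+\l}\,d + C(1-\x)\x^{k+\l}(1-d)
\]
is linear in $d$ with slope $C\x(1-\x)\bigl[(1-\x)^{k+\l-1}-\x^{k+\l-1}\bigr]$. The hypothesis $\x\le 1/2$ gives $1-\x\ge \x$, so this slope is non-negative, $f_*$ is non-decreasing on the interval, and its maximum there is at $d=\frac{k-1}{k+\l-1}$.

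On $[\frac{k}{k+\l},1]$ the function $f$ is constant, so the only $d$-dependence of $f_*$ comes from the term $C(1-\x)\x^{k+\l}(1-d)$, which is linear with non-positive slope $-C(1-\x)\x^{k+\l}$. Hence the maximum on this interval is attained at $d=\frac{k}{k+\l}$. Both endpoints $\frac{k-1}{k+\l-1}$ and $\frac{k}{k+\l}$ lie in the middle interval $[\frac{k-1}{k+\l-1},\frac{k}{k+\l}]$, so the maximum of $f_*$ over $[0,1]$ must be attained somewhere in that interval. There is no real obstacle here; the lemma is a straightforward bookkeeping consequence of the piecewise structure of $f$ recorded in \cref{l:f} combined with the elementary inequality $(1-\x)^{k+\l-1}\ge \x^{k+\l-1}$ that uses the hypothesis $\x\le 1/2$.
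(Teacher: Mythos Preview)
Your proof is correct and essentially identical to the paper's: both write out $f_*$ explicitly on the two outer pieces, observe it is linear on each, and use $\x\le 1/2$ to get non-negative slope on $[0,\frac{k-1}{k+\l-1}]$ and non-positive slope on $[\frac{k}{k+\l},1]$. The only difference is that you are slightly more explicit about factoring the slope on the left piece as $C\x(1-\x)\bigl[(1-\x)^{k+\l-1}-\x^{k+\l-1}\bigr]$ before invoking $\x\le 1/2$.
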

\begin{proof}
	We have $$f_*(d)=\begin{cases} \frac{(k-1)^{k-1}\l^\l}{(k+\l-1)^{k+\l-1}}\left(\x(1-\x)^{k+\l} - (1-\x)\x^{k+\l}\right)\cdot d + \frac{(k-1)^{k-1}\l^\l}{(k+\l-1)^{k+\l-1}}(1-\x)\x^{k+\l}&\mbox{if}\; d \in \left[0,\frac{k-1}{k+\l-1}\right],  \\
	\frac{k^k\l^\l}{(k+\l)^{k+\l}}\x(1-\x)^{k+\l} + \frac{(k-1)^{k-1}\l^\l}{(k+\l-1)^{k+\l-1}}(1-\x)\x^{k+\l}(1-d) &\mbox{if}\; d \in \left[\frac{k}{k+\l},1\right]. \end{cases}$$
	Thus $f_*(d)$ increases on $[0,\frac{k-1}{k+\l-1}]$ and decreases on $[\frac{k}{k+\l},1]$, which implies the conclusion. 
\end{proof}

\section{Lower bound and the main lemma}\label{s:lower}

In this section we take the first steps in the proof of \cref{t:main2}, reducing it to \cref{l:upper}, the proof of which occupies the rest of the paper. First, we establish the lower bound.

\begin{lem}\label{l:lower} Let $k \geq \ell \geq 1$ be integers, then
	$$ \sup_G s_{k,\l}(G) \geq \brm{OPT}(k,\l).$$ 
\end{lem}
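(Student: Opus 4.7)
The strategy is to present, for each admissible parameter choice ($\alpha\in[0,1]$ when $k=\l$; $(\alpha,d)\in[0,\tfrac12]\times[0,\tfrac{k}{k+\l}]$ when $k>\l$), an explicit sequence of digraphs $G_n=G_n(\alpha,d)$ whose induced-star density $s_{k,\l}(G_n)$ converges to the corresponding value inside the $\max$ in $\brm{OPT}(k,\l)$; taking the supremum over the parameters then yields the claimed inequality.

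For $k=\l$, I would take $G_n$ to be a balanced orientation of the complete bipartite graph with parts $X,Y$ of sizes $\lfloor\alpha n\rfloor$ and $n-\lfloor\alpha n\rfloor$, arranged (e.g.\ via a cyclic-shift scheme) so that every $x\in X$ has exactly $\lfloor|Y|/2\rfloor$ out-neighbors in $Y$ and every $y\in Y$ has $(1+o(1))|X|/2$ out-neighbors in $X$. Since the only arcs lie between $X$ and $Y$, a uniformly random $\phi\in\Phi$ can lie in $\mc S$ only if $\phi(c)$ and all the leaves are on opposite sides of the bipartition, in which case the required pairwise non-adjacency of leaves is automatic. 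Splitting cases on whether $\phi(c)\in X$ or $\phi(c)\in Y$ and using near-regularity to factor leaf-indicators into independent Bernoullis with parameter $\tfrac12$, one obtains
\[
s_{k,k}(G_n)\to\tfrac{1}{2^{2k}}\bigl[\alpha(1-\alpha)^{2k}+(1-\alpha)\alpha^{2k}\bigr],
\]
which matches the quantity maximized in $\brm{OPT}(k,k)$.

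For $k>\l$, a pure bipartite orientation only delivers $\alpha(1-\alpha)^{k+\l}d^k(1-d)^\l+(1-\alpha)\alpha^{k+\l}(1-d)^kd^\l$, and the second summand is strictly smaller than the target $\frac{(k-1)^{k-1}\l^\l}{(k+\l-1)^{k+\l-1}}(1-\alpha)\alpha^{k+\l}(1-d)$, since by \cref{l:f} this coefficient equals $\max_{p\in[0,1]}p^{k-1}(1-p)^\l$, attained at $p^\ast=\tfrac{k-1}{k+\l-1}$. I would therefore augment the bipartite backbone on $X\cup Y$ (with $|X|=\alpha n$ and $d$-fraction of arcs from $X$ to $Y$) by an auxiliary structure inside $X$ tuned to the ratio $p^\ast$---essentially a blow-up of $S_{k-1,\l}$ aligned with the outer bipartition---so that copies of $\SK$ with $\phi(c)\in Y$ and all leaves in $X$ automatically pick up a factor $(p^\ast)^{k-1}(1-p^\ast)^\l$ from the inner structure together with a single $(1-d)$ factor from the outer bipartite edge accounting for the remaining out-leaf, while the ``$\phi(c)\in X$, all leaves in $Y$'' copies continue to contribute the unchanged $\alpha(1-\alpha)^{k+\l}d^k(1-d)^\l$.

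In both cases the density calculation is carried out by conditioning on the image of $\phi(c)$, decomposing over the positions of the leaves via \eqref{eq:sgv}--\eqref{eq:sG}, factoring the leaf-adjacency indicators into independent Bernoullis thanks to near-regularity, and letting $n\to\infty$ so that $o(1)$ corrections from image collisions and boundary effects vanish. By \cref{l:f2} the supremum is attained for some $d\in[\tfrac{k-1}{k+\l-1},\tfrac{k}{k+\l}]$, and applying the arc-reversal symmetry $\alpha\leftrightarrow1-\alpha$ explains the restriction $\alpha\in[0,\tfrac12]$. The main obstacle I anticipate is specifying the inner substructure inside $X$ concretely enough that (i) it does not perturb the clean ``$\phi(c)\in X$'' contribution, (ii) the leaves of the ``$\phi(c)\in Y$'' copies remain pairwise non-adjacent inside $X$ so that the copies are indeed induced, and (iii) the prefactor $(p^\ast)^{k-1}(1-p^\ast)^\l$ is produced exactly; once these three points are settled the rest is a direct product-expansion computation.
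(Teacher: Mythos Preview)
Your treatment of the case $k=\l$ is correct and matches the paper. The genuine gap is in the case $k>\l$.

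You correctly observe that a \emph{uniformly} oriented complete bipartite graph yields only $(1-\alpha)\alpha^{k+\l}(1-d)^k d^\l$ from the ``center in $Y$'' copies, which falls short of the target. But the remedy is not to add structure inside $X$; it is to drop the uniformity of the orientation. The paper's construction is still an orientation of a complete bipartite graph with no internal edges whatsoever: one splits $Y=Y_1\cup Y_2$, directs all $X$--$Y_2$ arcs toward $Y_2$, and directs each $X$--$Y_1$ arc toward $Y_1$ independently with probability $\l/(k+\l-1)$. With $|Y_1|=\frac{k+\l-1}{k-1}(1-d)(1-\alpha)n$, every $x\in X$ still has out/in ratio $d:(1-d)$ into $Y$ (so the ``center in $X$'' term is unchanged), while every $y\in Y_1$ has out/in ratio exactly $(k-1):\l$ into $X$, producing the factor $\frac{(k-1)^{k-1}\l^\l}{(k+\l-1)^{k+\l-1}}$ for each such center. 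The $(1-d)$ in the second term of $\brm{OPT}(k,\l)$ is the \emph{proportion} $|Y_1|/|Y|\cdot\frac{k-1}{k+\l-1}$, not the probability attached to any single center--leaf edge.

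Your proposed mechanism cannot work as described. If $\phi(c)\in Y$ and all leaves lie in $X$, then every center--leaf arc is a bipartite arc, so there is no ``one remaining out-leaf'' that picks up a solitary $(1-d)$; all $k+\l$ arcs are governed by the bipartite orientation. Moreover, any edges you insert inside $X$ can only \emph{decrease} the number of such induced copies, since the leaves must be pairwise non-adjacent in $X$; your concern (ii) is not an obstacle to be overcome but a symptom that the inner structure is pulling in the wrong direction. The missing idea is that the optimal $p^\ast=\frac{k-1}{k+\l-1}$ should be realised as the degree ratio \emph{into $X$} of a well-chosen subset of $Y$, not by any gadget inside $X$.
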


\begin{proof}
	Suppose first $k\geq \l+1$, and let $\alpha,d$ achieve the maximum in the definition of $\brm{OPT}(k,\l)$. Let $G=G(n)$ be a random digraph on $n$ vertices defined as follows. Let $(X,Y_1,Y_2)$ be a partition of $V(G)$ such that \begin{align*} &|X|=(1+o(1))\alpha n,\\ &|Y_1|=(1+o(1))\frac{k+\l-1}{k-1}(1-d)(1-\alpha)n,\\ &|Y_2|=(1+o(1))\s{1-\frac{k+\l-1}{k-1}(1-d)}(1-\alpha)n, \end{align*}	The digraph $G$ is the complete bipartite digraph with bipartition $(X, Y_1 \cup Y_2)$ such that all the arcs between $X$ and $Y_2$ are directed towards $Y_2$, and the direction of each arc between $X$ and $Y_1$ is chosen independently at random so that it is directed towards $Y_1$ with probability $\frac{\l}{k+\l-1}$. A straightforward calculation shows that almost surely every vertex in $X$ has outdegree $$(1+o(1))\s{|Y_2|+\frac{\l}{k+\l-1}|Y_1|} = (1+o(1))d(1-\alpha) n
	$$ and indegree $(1+o(1))(1-d)(1-\x)n$. Thus the probability that the map $\phi \in \Phi$ lies in $\mc{S}$ and satisfies $\phi(c) \in X$, $\phi(I \cup O) \in Y_1 \cup Y_2$ is $$(1-o(1))\alpha(1-\alpha)^{k+\l}d^k(1-d)^\ell.$$ 
	Meanwhile, almost surely every vertex in $Y_1$ has outdegree $(1+o(1))\frac{k-1}{k+\l-1}\x n$ and indegree $(1+o(1))\frac{\l}{k+\l-1}\x n$.
	Thus the probability that the map $\phi \in \Phi$ lies in $\mc{S}$ and satisfies $\phi(c) \in Y_1$, $\phi(I \cup O) \in X$ is
	$$(1-o(1)) \frac{(k-1)^{k}\l^\l}{(k+\ell-1)^{k+\l}}\alpha^{k+\l}|Y_1|=(1-o(1))\frac{(k-1)^{k-1}\l^\l}{(k+\ell-1)^{k+\l-1}}(1-\alpha)\alpha^{k+\l}(1-d).$$
	Adding these bounds, we conclude that almost surely $s(G) \geq (1-o(1))\brm{OPT}(k,\l)$, as desired.
	
	In the case $k=\l$, the construction is simpler. We define $G$ to be a complete bipartite digraph with bipartition $(X, Y)$ such that $|X|=(1+o(1))\x$  and the direction of each arc between $X$ and $Y$ is chosen independently at random with both directions having probability $1/2$. Analogously to the previous case we  have $s(G) \geq (1-o(1))\brm{OPT}(k,\l)$ almost surely.
\end{proof}

By \cref{l:lower} it remains to show that $s_{k,\l}(G)  \leq \brm{OPT}(k,\l)$ for every digraph $G$. 

The hard part of the proof of the upper bound consists of proving the following statement.

\begin{lem}\label{l:upper} Let $k, \ell$ be as in \cref{t:main2} and let $G$ be a digraph such that $s(v) \geq \brm{OPT}(k,\l)$ for every $v \in V(G)$. Then $s(G) \leq  \brm{OPT}(k,\l).$
\end{lem}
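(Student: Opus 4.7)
The plan is to use the identity $s(G) = \frac{1}{n}\sum_{v\in V(G)} s(c\to v)$ from \eqref{eq:sG} and bound each $s(c\to v)$ in terms of the local structure at $v$. I would partition $V(G)$ into a set $X$ of ``center-like'' vertices (those whose total degree $\d(v)$ exceeds a threshold close to the extremal $1-\x$) and $Y = V(G)\setminus X$, writing $\x := \mu(X)$. In the bipartite extremal construction, vertices of $X$ serve as centers whose leaves lie in $Y$, producing the first term of $\brm{OPT}(k,\l)$, while the vertices of the ``$Y_1$''-subset of $Y$ serve as reversed centers producing the linear-in-$(1-d)$ second term; an aggregate out-arc density $d$ from $X$ to $Y$ plays the role of the second parameter, so that the target bound takes exactly the shape appearing in $\brm{OPT}(k,\l)$.

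The next step is to derive pointwise inequalities roughly of the form
\[ s(c\to v) \;\leq\; \d(v)^{k+\l}\, f\!\left(\d^+(v)/\d(v)\right) \;-\; (\text{loss terms}), \]
where the loss terms penalize edges inside the neighborhood of $v$ (which block leaf-images from being pairwise non-adjacent), wrongly directed arcs, and missing $X$-$Y$ arcs. The main tools are \cref{l:xayb}, which converts $\d^+(v)^k\d^-(v)^\l$ into a clean single-variable expression, and \cref{l:f}, which is designed both to cover vertices of $Y$ (whose neighbor ratio sits in the linear regime $[0,\tfrac{k-1}{k+\l-1}]$) and to enable a Jensen-type aggregation via concavity. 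Summing over $v$ and applying Jensen reduces the leading term to an expression in $\x$ and $d$ alone, and \cref{l:f2} confirms its optimum lies in $[\tfrac{k-1}{k+\l-1},\tfrac{k}{k+\l}]$, matching the two-term formulation of $\brm{OPT}(k,\l)$ exactly.

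The hypothesis $s(v)\geq \brm{OPT}(k,\l)$ for every $v$ is what lets one absorb the loss terms. Via \eqref{eq:sgv} the pointwise bound forces the local structure around $v$ to resemble the bipartite template closely: otherwise $s(c\to v)$, $s(i\to v)$, and $s(o\to v)$ cannot jointly reach $\brm{OPT}(k,\l)$. With this rigidity in hand, \cref{l:fractions} lets one cancel complementary deficiencies — an edge inside $Y$ against the $X$-$Y$ arcs it effectively consumes, for example — so the negative error terms dominate the positive ones. The main obstacle will be making these cancellations uniform and correctly accounting for the contributions $s(i\to v)$ and $s(o\to v)$, which can cross between the two branches of $\brm{OPT}$. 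In particular, when $\l=1$ the iterated-blow-up construction flagged in the proof outline shows that $v$ can locally mimic the extremal digraph while $G$ globally diverges from a bipartite orientation; the linear branch of $f$ was inserted precisely to absorb this, but converting that flexibility into an actual global cancellation, rather than a new extremal example, is the delicate step and is where I expect the bulk of the (presumably lengthy, appendix-bound) casework to live.
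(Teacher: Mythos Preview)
Your plan matches the paper's approach: partition into high- and low-degree sets $X$ and $Y$, bound the leading contribution via the concave envelope $f$ and Jensen to obtain the $\alpha(1-\alpha)^{k+\ell}f(d)$ term, use the hypothesis $s(v)\geq\brm{OPT}(k,\ell)$ to pin down structural parameters ($\alpha$, $D$, $\beta$, $\gamma$, $S_1$) tightly enough that the loss terms absorb the stray contributions, and treat $\ell=1$ as a harder subcase. The paper organizes the argument through an explicit five-way decomposition of $\mc{S}$ (according to whether the center and the leaves land in $X$ or $Y$), and its key pointwise bound is $s_1(c\to x)\le(1-\alpha)^{k+\ell}f\bigl(1-\rho^-_Y(x)/(1-\alpha)\bigr)$ rather than your $\rho(v)^{k+\ell}f(\rho^+(v)/\rho(v))$ --- restricting leaves to $Y$ up front is what produces the $(1-\alpha)^{k+\ell}$ factor and lets Jensen apply cleanly --- but this is a refinement of your outline, not a different route.
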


In the remainder of this section we recall the standard argument that justifies making the assumption made in \cref{l:upper} that $s(v)$ is large for every $v \in V(G)$, and  thus deriving the upper bound in \cref{t:main2} from this lemma.

\begin{proof}[Proof of \cref{t:main2} modulo \cref{l:upper}.]
	As we already noted, by \cref{l:lower}, it suffices to show that $s(G)  \leq \brm{OPT}(k,\l)$ for every digraph $G$. Suppose not that there exists some $G$ such that $\delta = s(G)  - \brm{OPT}(k,\l) > 0$. We first show that we may assume that $|V(G)|$ is large.
	Indeed, for any integer $t \geq 1$ consider the blowup $G^{(t)}$ obtained by replacing every $v \in V(G)$ by an independent set $U_v$ with $|U_v|=t$, and for every arc  $vv' \in E(G)$ and all $u \in U_v$ and $u' \in U_{v'}$ we add an arc $uu'$ to $G^{(t)}$. It is easy to see that $s(G^{(t)}) \geq s(G)$. Therefore we may
	replace $G$ by $G^{(t)}$ for $t$ sufficiently large and assume that $n=|V(G)|> (k+\l)/\delta$. Moreover, we assume without loss of generality that $s(G) \geq s(G')$ for every $G'$ with $|V(G')| = n$.
	
	By \cref{l:upper}, there exists $v \in V(G)$ such that $s(v) < \brm{OPT}(k,\l)$.  Meanwhile 
	by \eqref{eq:sG} there exists $u \in V(G)$ such that $s(u) \geq s(G) > s(v)+\delta$. Let $G'$ be a digraph obtained from $G$ by deleting $v$ and adding a vertex $u'$ which has exactly the same in-neighbors and out-neighbors as $u$ in $G \setminus v$.
	Then
	\begin{align*}
	s(G)&\ge s(G') \ge s(G)+\frac{(k+\l+1)s(u)}{n}-\frac{(k+\l+1)s(v)}{n}-\frac{(k+\l+1)(k+\l)s(u,v)}{n^2}\\
	&\ge s(G)+\frac{k+\l+1}{n}\left(\delta - \frac{k+\l}{n}\right) > s(G),
	\end{align*}
	a contradiction. This proves \cref{t:main2} (assuming \cref{l:upper}).
\end{proof}

\section{Proof of \cref{l:upper}}\label{s:upper}

\subsection{Further notation and first estimates}

For brevity, let \begin{align*}m=k+\l,
\qquad\lambda_0 = \frac{k^k \l^\l}{m^{m}} \quad \mathrm{and} \quad\lambda_1 = \frac{k^k \l(\l-1)^{\l-1}}{(m-1)^{m-1}}.\end{align*}  

By \cref{l:xayb} we have 
$$x^ky^\l \leq \lambda_0(x+y)^m$$
for all $x,y \geq 0$, and we frequently employ \cref{l:xayb} in a similar manner.

We start by establishing a bound on $s(v)$ in terms of $\d(v)$. This will imply that $\d(v)$ is either close to one or to zero, allowing us to define the bipartition of $G$ accordingly.

\begin{claim}\label{c:degree} For every $v \in V(G)$ we have
	\begin{equation}\label{eq:sx}
	s(v) \le \frac{1}{m+1}\left(\lambda_0  \d^{m}(v) + \lambda_1 \d(v)(1 - \d(v))^{m-1}\right).
	\end{equation}
\end{claim}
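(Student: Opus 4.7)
The plan is to decompose $s(v)$ via \eqref{eq:sgv} and bound the three terms $s(c\to v)$, $s(i\to v)$, $s(o\to v)$ by crude enumeration, losing factors only through dropping the injectivity of $\phi$ and the pairwise non-adjacency of leaves, both of which are legitimate for an upper bound. For the first term, if $\phi(c)=v$ and $\phi\in\mathcal{S}$, then each out-leaf of $\SK$ must be mapped into the strict out-neighborhood $N^+(v)\setminus N^-(v)$ (otherwise the reverse arc would appear in the image and destroy the isomorphism), and each in-leaf into $N^-(v)\setminus N^+(v)$. Hence
\[
 s(c\to v) \le \frac{|N^+(v)\setminus N^-(v)|^{k}\,|N^-(v)\setminus N^+(v)|^{\ell}}{n^{m}}.
\]
Since the two sets are disjoint subsets of $N(v)$, \cref{l:xayb} with $(a,b)=(k,\ell)$ gives $s(c\to v)\le \lambda_0\,\rho(v)^{m}$.

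For the leaf terms I condition additionally on $w:=\phi(c)$. If $\phi(i)=v$ for an in-leaf $i$, then $w\in N^+(v)\setminus N^-(v)$, the remaining $\ell-1$ in-leaves land in $B_w:=(N^-(w)\setminus N^+(w))\cap(V(G)\setminus (N(v)\cup\{v\}))$, and the $k$ out-leaves in $A_w:=(N^+(w)\setminus N^-(w))\cap(V(G)\setminus (N(v)\cup\{v\}))$, and symmetrically for $\phi(o)=v$. With the same over-count,
\[
\ell\,s(i\to v)+k\,s(o\to v)\le \frac{1}{n^{m}}\sum_{w\in N^+(v)\setminus N^-(v)}\ell\,|A_w|^{k}|B_w|^{\ell-1}+\frac{1}{n^{m}}\sum_{w\in N^-(v)\setminus N^+(v)}k\,|A_w|^{k-1}|B_w|^{\ell}.
\]
Applying \cref{l:xayb} with $(a,b)=(k,\ell-1)$ and $(k-1,\ell)$ respectively bounds the summands by $\lambda_1(|A_w|+|B_w|)^{m-1}$ and $\tfrac{k(k-1)^{k-1}\ell^{\ell}}{(m-1)^{m-1}}(|A_w|+|B_w|)^{m-1}$. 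Using $|A_w|+|B_w|\le n(1-\rho(v))$ together with $|N^+(v)\setminus N^-(v)|+|N^-(v)\setminus N^+(v)|\le |N(v)|=n\rho(v)$ then yields $\ell\,s(i\to v)+k\,s(o\to v)\le \lambda_1\,\rho(v)(1-\rho(v))^{m-1}$, provided the second constant is also bounded by $\lambda_1$.

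The only delicate point is precisely this constant comparison $k(k-1)^{k-1}\ell^{\ell}\le k^{k}\ell(\ell-1)^{\ell-1}$, equivalent to $\bigl(k/(k-1)\bigr)^{k-1}\ge \bigl(\ell/(\ell-1)\bigr)^{\ell-1}$, which is the unique place where the hypothesis $k\ge\ell$ is used and which is ultimately responsible for the asymmetric shape of $\lambda_1$. It follows from the monotonicity of $x\mapsto (1+\tfrac{1}{x-1})^{x-1}$ on integers $x\ge 2$ (increasing from $2$ to $e$), with the boundary case $\ell=1$ being immediate since the left-hand side then equals $1$. Combining the two bounds through \eqref{eq:sgv} and dividing by $m+1$ completes the proof of \eqref{eq:sx}.
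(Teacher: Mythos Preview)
Your proof is correct and follows essentially the same approach as the paper: both decompose $s(v)$ via \eqref{eq:sgv}, bound $s(c\to v)\le\lambda_0\rho(v)^m$ by \cref{l:xayb}, bound the leaf contributions by conditioning on the image of the center and applying \cref{l:xayb} with exponents $(k,\ell-1)$ and $(k-1,\ell)$, and then invoke the inequality $\bigl(\tfrac{k-1}{k}\bigr)^{k-1}\le\bigl(\tfrac{\ell-1}{\ell}\bigr)^{\ell-1}$ (equivalently your $\bigl(\tfrac{k}{k-1}\bigr)^{k-1}\ge\bigl(\tfrac{\ell}{\ell-1}\bigr)^{\ell-1}$) to merge the two constants into $\lambda_1$. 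One small slip: in your last paragraph you write ``the left-hand side then equals $1$'' for the case $\ell=1$, but you mean the right-hand side (with the convention $0^0=1$); the argument itself is fine.
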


\begin{proof}
Any $S_{k,\ell}$ containing $v$ as its center has $\ell$ vertices from $N^-(v)$ and $k$ vertices from $N^+(v)$. So we have
\begin{align}\label{eq:sxcenter}
s(c\to v)\le \d^+(v)^k \d^-(v)^\l \le
\lambda_0\d^{m}(v).
\end{align}
If $\phi \in \mc{S}$ maps a leaf of $\SK$ to $v$ 
then the center of this $\SK$ is mapped to a neighbor of $v$ and other leaves are mapped to non-neighbors of $v$.
Thus using  \cref{l:xayb}, we have
\begin{align}\label{eq:sxo}
s(o \to v) &\le\d^-(v) \left[(1-\d(v))^{m-1}\left(\frac{\l}{m-1}\right)^{\l}\left(\frac{k-1}{m-1}\right)^{k-1}\right],
\end{align}
and symmetrically,
\begin{align}\label{eq:sxi}
s(i \to v) &\le \d^+(v) \left[(1-\d(v))^{m-1}\left(\frac{\l-1}{m-1}\right)^{\l-1}\left(\frac{k}{m-1}\right)^{k}\right].
\end{align}
From \eqref{eq:sxo} and \eqref{eq:sxi} we derive
\begin{align}\label{eq:sxio}
\l \cdot s(i \to v) + k\cdot s(o \to v)  &\le \frac{k^k \l^{\l}}{(m-1)^{m-1}}\left({1 - \d(v)}\right)^{m-1}  \left(\d^-(v)\left(\frac{k-1}{k} \right)^{k-1} +  \d^+(v)\left(\frac{\l-1}{\l} \right)^{\l-1}\right) \notag \\ & \le \lambda_1\d(v)(1 - \d(v))^{m-1}. 
\end{align}
Plugging \eqref{eq:sxcenter} and \eqref{eq:sxio} into \eqref{eq:sgv}  we obtain \eqref{eq:sx}.
\end{proof}

Let $X$ be the set of vertices $v \in V(G)$ such that $\rho(v) \geq 1/2$, and let $Y=V(G)\setminus X$. Our goal is to show that any deviation of $G$ from the family of examples described in \cref{l:lower} reduces  $s(G)$. To facilitate our analysis we now partition $\mc{S}$ into several subsets. We say that a map $\phi \in \mc{S}$ is \begin{itemize}
	\item \emph{Type $1$}, if $\phi (c) \in X$, $\phi(I \cup O) \subseteq Y$,
	\item \emph{Type $2$}, if $\phi (c) \in Y$, $\phi(I \cup O) \subseteq X$,
	\item \emph{Type $X$}, if $\Im(\phi) \subseteq X$,
	\item \emph{Type $Y$}, if $\Im(\phi) \subseteq Y$,
	\item \emph{Type $0$}, otherwise, i.e. when  $\phi(I \cup O) \cap X \neq \emptyset$ and $\phi(I \cup O) \cap Y \neq \emptyset$.
\end{itemize} 
For a type $T \in \{1,2,X,Y,0\}$, let $\mc{S}_T$ denote the set of maps in $\mc{S}$ of type $T$, let $s_T(G) = \Pr[\phi \in \mc{S}_T]$, and define $s_T(v), s_T(c \to v)$, etc., accordingly, e.g.
$s_T(v)=\Pr[\phi \in \mc{S}_T | v \in \Im(\phi)].$

Our estimates are given in terms of the following parameters of $G$: \begin{align*} &S= (m+1)\min_{v \in V(G)}s(v) \geq 
(m+1)\brm{OPT}(k,\l),  &D = \min_{x \in X}\d(x), \\ &\x = \mu(X)=|X|/|V(G)|, 
&\y=\max_{y \in Y}\d_Y(y), \\&S_1 = \min_{x \in X} (\d^{+}_Y(x))^k(\d^{-}_Y(x))^\l, &\gamma =\min_{x \in X}\rho^{-}_Y(x). \end{align*}

First, we upper bound the probability of a star with a leaf in $X$, given a fixed center. 
\begin{claim}\label{c:leafx}
	\begin{align}
	\Pr[\phi \in \mc{S} \wedge (\phi(I \cup O) \cap X  \neq \emptyset) | \phi(c) = v] \leq \lambda_1 \rho_X(v) (1-D)^{m-1}. \label{e:leafx}
	\end{align}   
\end{claim}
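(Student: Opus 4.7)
The plan is to decompose the event by which leaf of $S_{k,\ell}$ has image in $X$, apply a union bound over this leaf, and mirror the proof of \cref{c:degree}---but now using that every $w\in X$ has $|N(w)|\ge Dn$ to produce a factor of $(1-D)^{m-1}$.

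Fix $z\in O$ and $w\in N^+_X(v)$. If $\phi\in\mc{S}$ with $\phi(c)=v$ and $\phi(z)=w$, then since the leaves of $S_{k,\ell}$ form an independent set, the other $k-1$ out-leaves must map into $A:=N^+(v)\setminus N(w)$ and the $\ell$ in-leaves into $B:=N^-(v)\setminus N(w)$. Dropping injectivity (an upper bound),
$$\Pr\bigl[\phi\in\mc{S}\wedge\phi(z)=w\,\big|\,\phi(c)=v\bigr]\le \frac{1}{n}\cdot\frac{|A|^{k-1}|B|^{\ell}}{n^{m-1}}.$$
As $A$ and $B$ are disjoint subsets of $V(G)\setminus N(w)$ and $|N(w)|\ge Dn$, we have $|A|+|B|\le(1-D)n$, so \cref{l:xayb} with exponents $k-1$ and $\ell$ yields $|A|^{k-1}|B|^{\ell}\le\frac{(k-1)^{k-1}\ell^{\ell}}{(m-1)^{m-1}}(1-D)^{m-1}n^{m-1}$. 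Summing $1/n$ over $w\in N^+_X(v)$ contributes a factor $\rho^+_X(v)$, and summing over the $k$ out-leaves $z\in O$ gives a total out-leaf contribution of $\frac{k(k-1)^{k-1}\ell^{\ell}}{(m-1)^{m-1}}\rho^+_X(v)(1-D)^{m-1}$; the symmetric argument on in-leaves contributes $\frac{k^{k}\ell(\ell-1)^{\ell-1}}{(m-1)^{m-1}}\rho^-_X(v)(1-D)^{m-1}$.

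The in-leaf coefficient equals $\lambda_1$ exactly, while the out-leaf coefficient is at most $\lambda_1$ iff $((k-1)/k)^{k-1}\le((\ell-1)/\ell)^{\ell-1}$ (with the convention $0^0=1$ when $\ell=1$), which holds because $(1-1/n)^{n-1}$ is non-increasing in $n$ and $k\ge\ell$---the same monotonicity used to introduce $\lambda_1$ in \cref{c:degree}. Hence the combined bound is at most $\lambda_1(\rho^+_X(v)+\rho^-_X(v))(1-D)^{m-1}=\lambda_1\rho_X(v)(1-D)^{m-1}$, as desired. The only real subtlety is this final coefficient comparison; everything else is a direct invocation of \cref{l:xayb}, so I do not anticipate a substantive obstacle.
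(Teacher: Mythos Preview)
Your proof is correct and follows essentially the same approach as the paper. The paper's version is more condensed---it writes the left-hand side as $\alpha\,\mathbb{E}_{x\in X}[\ell\cdot s(i\to x,c\to v)+k\cdot s(o\to x,c\to v)]$ and then appeals to the estimates from the proof of \cref{c:degree}---but the underlying argument (union bound over which leaf lands in $X$, bounding the remaining leaves as non-neighbors of that vertex via \cref{l:xayb}, and the final coefficient comparison $((k-1)/k)^{k-1}\le((\ell-1)/\ell)^{\ell-1}$) is identical to yours.
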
	

\begin{proof}
	Using a variation of the estimates used in the proof of \cref{c:degree} we obtain
	\begin{align*}
	\Pr&[\phi \in \mc{S} \wedge (\phi(I \cup O) \cap X  \neq \emptyset) | \phi(c) = v]  \\
	&\leq \alpha \bb{E}_{x \in X}[\l \cdot s(i \to x, c \to v) + k\cdot s(o \to x, c \to v)] \\
	&\leq (1-D)^{m-1} \s{\l\frac{k^k(\l-1)^{\l-1}}{(m-1)^{m-1}}\rho^-_X(v) + k\frac{(k-1)^{k-1}\l^{\l}}{(m-1)^{m-1}}\rho^+_X(v) } \\ &\leq \lambda_1 \rho_X(v) (1-D)^{m-1}, 
	\end{align*}  
	where the last inequality holds because $\left(\frac{k-1}{k}\right)^{k-1}\leq \left(\frac{\l-1}{\l}\right)^{\l-1}$, as desired. 
\end{proof}

\begin{claim}\label{c:AM} 
\begin{align} 
	&\lambda_0(1-\x)^{m} \geq \gamma^\l(1-\x - \gamma)^k  \geq  S_1 \geq S -  \lambda_1(1+\x)(1-D)^{m-1}. \label{e:alpha} 
	\end{align}
\end{claim}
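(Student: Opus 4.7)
\textbf{Proof plan for \cref{c:AM}.} The three inequalities in \eqref{e:alpha} are of rather different nature, so I would attack them in reverse order, starting from the most substantive one.

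\emph{Rightmost inequality: $S_1 \geq S - \lambda_1(1+\x)(1-D)^{m-1}$.} Fix an arbitrary $x \in X$; I would bound $(m+1)s(x)$ from above and use $(m+1)s(x) \geq S$. By \eqref{eq:sgv} it suffices to bound $s(c\to x)$, $s(i \to x)$, and $s(o \to x)$. For $s(c \to x)$, split the probability according to whether $\phi(I \cup O) \subseteq Y$ or $\phi(I \cup O) \cap X \neq \emptyset$. The first event contributes exactly $(\d^+_Y(x))^k(\d^-_Y(x))^\l$, while the second is bounded by $\lambda_1\d_X(x)(1-D)^{m-1}$ via \cref{c:leafx}. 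For $\l \cdot s(i\to x) + k\cdot s(o\to x)$, reuse the computation \eqref{eq:sxio} from the proof of \cref{c:degree}, noting that $\d(x) \geq D$ gives $(1-\d(x))^{m-1} \leq (1-D)^{m-1}$, and trivially $\d(x) \leq 1$. Putting this together and using $\d_X(x) \leq \x$ yields
\[ (m+1)s(x) \leq (\d^+_Y(x))^k(\d^-_Y(x))^\l + \lambda_1(1+\x)(1-D)^{m-1}, \]
which rearranges to the desired bound after taking the minimum over $x \in X$.

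\emph{Middle inequality: $\gamma^\l(1-\x-\gamma)^k \geq S_1$.} Let $x_0 \in X$ attain $\d^-_Y(x_0) = \gamma$. Since $\d^+_Y(x_0) + \d^-_Y(x_0) \leq \mu(Y) = 1 - \x$, we have $\d^+_Y(x_0) \leq 1 - \x - \gamma$, so
\[ S_1 \leq (\d^+_Y(x_0))^k(\d^-_Y(x_0))^\l \leq (1-\x-\gamma)^k \gamma^\l. \]

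\emph{Leftmost inequality: $\lambda_0(1-\x)^m \geq \gamma^\l(1-\x-\gamma)^k$.} This is immediate from \cref{l:xayb} with $(a,b,x,y) = (\l, k, \gamma, 1-\x-\gamma)$, since $\frac{\l^\l k^k}{m^m} = \lambda_0$.

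The only real work is the rightmost inequality, whose only subtlety is keeping track of which probability estimates apply to $s(c\to x)$ versus $s(i\to x), s(o\to x)$, and ensuring that the contribution of stars with a leaf in $X$ is absorbed into the $\lambda_1(1+\x)(1-D)^{m-1}$ error term via \cref{c:leafx}; no new ideas beyond those already assembled in \cref{c:degree,c:leafx} are needed.
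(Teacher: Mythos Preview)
Your proposal is correct and follows essentially the same approach as the paper: the leftmost inequality via \cref{l:xayb}, the middle one by choosing the vertex attaining $\gamma$, and the rightmost by combining \eqref{eq:sxio} with \cref{c:leafx} at a vertex of $X$. Two cosmetic remarks: the ``contributes exactly $(\d^+_Y(x))^k(\d^-_Y(x))^\l$'' should be ``at most'' (you are ignoring the independence condition on leaves), and you should note the trivial case $X=\emptyset$ as the paper does.
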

\begin{proof} Note that if $X  =\emptyset$ the claim trivially holds, and so we assume $X \neq \emptyset$.
	
	We have $$\lambda_0(1-\x)^{m} \geq \gamma^\l(1-\x - \gamma)^k  \geq  S_1,$$ where the first inequality holds by \eqref{e:xayb}, and the second by definition of $S_1$.
	
	 To verify the last inequality in \eqref{e:alpha} choose $x \in X$ such that $S_1=(\d^{+}_Y(x))^k(\d^{-}_Y(x))^\l$. Then $s_1(c \to x)~\leq~S_1,$
	 and
	 $$\l \cdot s(i \to x) + k\cdot s(o \to x) \leq \lambda_1 (1-D)^{m-1},$$
	 by \eqref{eq:sxio}. Using  \eqref{e:leafx} to bound the remaining contribution to $s(x)$,  we obtain
	 \begin{align*}
	    S   \leq (m+1)s(x) &=s(c\to x) + \l \cdot s(i \to x) + k\cdot s(o \to x)\\
	    &\leq S_1+\lambda_1 \rho_X(v) (1-D)^{m-1} + \lambda_1 (1-D)^{m-1}\\
	    &\leq S_1 + \lambda_1(1+\alpha)(1-D)^{m-1}
	 \end{align*}
	 implying \eqref{e:alpha}. 
\end{proof}	

It is easy to derive from \cref{c:AM} that $\alpha \neq 1$, i.e. $Y \neq \emptyset$. (For a stronger estimate on $\alpha$, see \cref{c:alphaU} below.)

\begin{claim}\label{c:B} Either
	\begin{align} 
&\lambda_1  \alpha \left( (1-\alpha - \beta)^{m-1} + (1-D)^{m-1}\right) +(\lambda_0+\lambda_1) \beta^{m}  \geq S, \qquad \mathrm{or}  \notag\\
&(\lambda_0+\lambda_1) \beta^{m} +  \frac{m(m-1)}{\binom{m}{k}}\alpha  (\alpha + 
\beta)(1-D)^{m-2} \geq S. \label{e:beta} 
	\end{align}	
\end{claim}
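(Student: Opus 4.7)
The plan is to pick $y_0 \in Y$ with $\rho_Y(y_0) = \beta$ and upper-bound $(m+1)s(y_0) \ge S$, using the decomposition $(m+1)s(y_0) = s(c\to y_0) + \ell s(i\to y_0) + k s(o\to y_0)$ split further by the type of $\phi \in \mathcal{S}$. Three contributions admit direct estimates: the type-$Y$ part is at most $(\lambda_0 + \lambda_1)\beta^m$ by \cref{l:xayb}, using $\rho_Y(u) \le \beta$ for every $u \in Y$; the part with $y_0$ as centre and at least one leaf in $X$ (types $2$ and the centre case of type $0$) is at most $\lambda_1 \alpha (1-D)^{m-1}$ by \cref{c:leafx} applied to $v = y_0$ with $\rho_X(y_0) \le \alpha$; and the type-$1$ part is at most $\lambda_1 \alpha (1-\alpha-\beta)^{m-1}$, since the $m-1$ leaves other than $y_0$ lie in $Y \setminus N(y_0)$, a set of size at most $(1-\alpha-\beta)n$, after applying \cref{l:xayb} as in the derivation of~\eqref{eq:sxio}.

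If the remaining contribution---from type-$0$ maps with $y_0$ as a leaf---is zero, then summing the three bounds above already gives alternative~(i). Otherwise I would bound this remaining contribution together with the type-$1$ and centre-in-$X$ parts by a single double-counting estimate: for each pair of distinct positions $(z, z') \in V(S_{k,\ell})^2$ and each $v \in X$ with $\phi(z) = y_0$ and $\phi(z') = v$, the centre of $\phi$ lies in $N(y_0) \cap N(v)$ (size at most $|N(y_0)| \le (\alpha+\beta)n$, since $\rho(y_0) \le \alpha + \beta$), while the $m-2$ remaining vertices of $\phi$ lie in $V \setminus N(v)$ (size at most $(1-D)n$ as $v \in X$). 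Converting from the map-count to the induced-copy count extracts the combinatorial factor $k!\,\ell!/(m-2)! = m(m-1)/\binom{m}{k}$; summing over $v \in X$ then yields the bound $\tfrac{m(m-1)}{\binom{m}{k}}\alpha(\alpha+\beta)(1-D)^{m-2}$, giving alternative~(ii).

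The main obstacle is the clean extraction of the factor $m(m-1)/\binom{m}{k}$ in the double-counting argument: a naive union bound over ordered pairs of distinct leaf positions overcounts each induced copy of $S_{k,\ell}$ by exactly $k!\,\ell!/(m-2)!$, and recognising this overcount as the factor appearing in~\eqref{e:beta} is the key combinatorial point. A secondary care point is showing that the (centre, leaf)-pair contributions to the double-count (where one of $z, z'$ equals the centre of $S_{k,\ell}$) are already absorbed into the \cref{c:leafx} estimate for the ``$y_0$ as centre'' case, so that only leaf–leaf pairs need to contribute to the $\frac{m(m-1)}{\binom{m}{k}}\alpha(\alpha+\beta)(1-D)^{m-2}$ term in alternative~(ii).
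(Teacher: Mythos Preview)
Your decomposition and the individual estimates for type $Y$, type $1$, and the ``$y_0$ centre with an $X$-leaf'' contribution are all correct and match the paper's. The gap is in how you obtain the dichotomy. Your case split ``part 4 is zero vs.\ nonzero'' does not yield alternative (ii): when the type-$0$ leaf contribution is merely nonzero (but possibly tiny), you still need to account for the type-$1$ and ``$y_0$ centre'' contributions, and your proposal would then produce RHS(i) plus the cross term, not RHS(ii) alone. Your attempt to absorb those contributions into the double-counting estimate doesn't work either, since the double-count you describe (two fixed \emph{leaves} $y_0$ and $v$, centre in $N(y_0)$, remaining $m-2$ leaves in $V\setminus N(v)$) only captures the leaf--leaf case; the $(z' = c)$ case --- centre in $X$ with $y_0$ a leaf, which is exactly type $1$ --- is not covered by your \cref{c:leafx} step, which handles only $z = c$.

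The paper's device is to record $\alpha' := \rho_X(y_0)$ and sharpen each bound accordingly: the centre of a type-$1$ star and the $X$-leaf in the \cref{c:leafx} estimate must lie in $N_X(y_0)$, so those two contributions are jointly bounded by $\lambda_1 \alpha'\big((1-\alpha-\beta)^{m-1} + (1-D)^{m-1}\big)$; conversely, the $X$-leaf in the type-$0$ contribution must be a \emph{non}-neighbour of $y_0$, so that term is bounded by $\frac{m(m-1)}{\binom{m}{k}}(\alpha-\alpha')(\alpha+\beta)(1-D)^{m-2}$. The resulting combined upper bound on $(m+1)s(y_0)$ is affine in $\alpha' \in [0,\alpha]$, hence is at most its value at one of the endpoints $\alpha' = \alpha$ (giving alternative (i)) or $\alpha' = 0$ (giving alternative (ii)). This trade-off --- not a zero/nonzero split --- is what produces the disjunction.
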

\begin{proof}
    Let $y \in Y$ be chosen so that $\d_Y(y)=\beta$, and let $\alpha' = \rho_X(y)$. Then 
	$$s_1(y) \leq \frac{1}{m+1} \lambda_1  \alpha' (1-\alpha - \beta)^{m-1}$$
Meanwhile, repeating the estimates in the proof of \cref{c:degree} gives
$$\Pr[(\phi \in \mc{S}) \wedge (\Im(\phi) \subseteq Y) | y \in \Im(\phi)] \leq \frac{1}{m+1} (\lambda_0+\lambda_1) \beta^{m}.$$ Next we show that
\begin{equation}
\label{e:B2}\Pr[(\phi \in \mc{S}) \wedge  (\phi(I \cup O) \cap X \neq \emptyset) |  y \in \phi(I \cup O)] \leq \frac{1}{\binom{m}{k}}(m-1)(\alpha -\alpha') (\alpha' + 
\beta) (1-D)^{m-2}.
\end{equation}
To prove \eqref{e:B2} we first upper bound the probability that a map $\phi \in \Phi$ with $y \in \phi(I \cup O)$ satisfies  $\phi(I \cup O) \cap X \neq \emptyset$ and maps edges of $\SK$ to edges of $G$ and non-edges to non-edges, while not necessarily preserving edge directions. For each of $m-1$ vertices in $(I \cup O) - \{\phi^{-1}(y)\}$, the probability that this vertex is mapped to a non-neighbor of $y$ in $X$ is $\alpha - \alpha'$. The probability that $c$ is mapped to a neighbor of $y$ is $\rho(y)  \leq \alpha +\beta$. Finally, the remaining vertices in $I \cup O$ must be mapped to non-neighbors of the chosen leaf in $X$ which happens with probability at most $(1-D)^{m-2}$. This yields the upper bound
$(m-1)(\alpha-  \rho_X(y)) (\rho_X(y) + 
\beta) (1-D)^{m-2}$ on the probability that $\phi$ has the above properties.
As the probability that such a map $\phi$ preserves directions is $\frac{1}{\binom{m}{k}}$, \eqref{e:B2} follows.
Combining the above inequalities and \eqref{e:leafx} we obtain
	\begin{align*} &S \leq (m+1)s(y) \\ &\leq  \lambda_1  \alpha' \left( (1-\alpha - \beta)^{m-1} + (1-D)^{m-1}\right) +(\lambda_0+\lambda_1) \beta^{m} + \frac{m(m-1)}{\binom{m}{k}}(\alpha -\alpha') (\alpha + 
\beta)(1-D)^{m-2}. \end{align*}
As the last term is a linear function of $\alpha'$, the inequalities hold either for $\alpha' = \alpha$ or $\alpha' = 0$, implying \eqref{e:beta}.
\end{proof}

\begin{claim}\label{c:AL} 
	\begin{equation} \label{e:AL}
	\lambda_0  \x (1-\x)^{m} + \lambda_1 \x (1-D)^{m-1} + \lambda_0(1-\x)\y^m \geq \frac{S}{m+1}.
	\end{equation} 	
\end{claim}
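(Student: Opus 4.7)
The plan is to rearrange \eqref{e:AL} as an upper bound on $s(G)$ and then establish it by partitioning $\mc{S}$ according to its five types. The lower-bound side is immediate: since $s(G) = \frac{1}{n}\sum_{v} s(v)$ and $s(v) \geq S/(m+1)$ by hypothesis, we have $s(G) \geq S/(m+1)$. Thus it suffices to show
$$ s(G) \leq \lambda_0 \alpha(1-\alpha)^m + \lambda_1 \alpha(1-D)^{m-1} + \lambda_0(1-\alpha)\beta^m. $$

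First, observe that maps in $\mc{S}_1 \cup \mc{S}_Y$ send every leaf into $Y$, whereas every map in $\mc{S}_2 \cup \mc{S}_X \cup \mc{S}_0$ has at least one leaf in $X$. For the first group, use the total-probability decomposition $s_T(G) = \frac{1}{n}\sum_v s_T(c\to v)$. Conditioning on $\phi(c)=v$, the probability that $\phi \in \mc{S}_1 \cup \mc{S}_Y$ is at most $\rho^+_Y(v)^k \rho^-_Y(v)^\ell$, which by \cref{l:xayb} is bounded by $\lambda_0 \rho_Y(v)^m$. For $v\in X$ this gives at most $\lambda_0(1-\alpha)^m$ (since $\rho_Y(v)\leq\mu(Y)=1-\alpha$), and for $v\in Y$ it gives at most $\lambda_0\beta^m$ by the definition of $\beta$. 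Averaging produces the first and third terms $\lambda_0\alpha(1-\alpha)^m + \lambda_0(1-\alpha)\beta^m$.

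For the remaining three types, every map has $\phi(I\cup O)\cap X\neq\emptyset$, so the total-probability formula over $\phi(c)$ together with \cref{c:leafx} yields
$$ s_2(G)+s_X(G)+s_0(G) \leq \frac{1}{n}\sum_v \lambda_1 \rho_X(v)(1-D)^{m-1}. $$
The average $\frac{1}{n}\sum_v \rho_X(v)$ is handled by double counting: $\sum_v |N(v)\cap X| = \sum_{x\in X}|N(x)| \leq |X|\cdot n$, giving $\frac{1}{n}\sum_v \rho_X(v)\leq \alpha$. This supplies the middle term $\lambda_1\alpha(1-D)^{m-1}$, and adding the three bounds completes the argument. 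I expect no real obstacle here; the proof is a short bookkeeping exercise that merely repackages \cref{l:xayb} and \cref{c:leafx}, with the type-partition of $\mc{S}$ tailored precisely so that the three upper bounds line up with the three summands of \eqref{e:AL}.
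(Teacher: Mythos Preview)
Your proposal is correct and essentially identical to the paper's proof: both condition on $\phi(c)=v$ and split according to whether $\phi(I\cup O)\subseteq Y$ (your $\mc{S}_1\cup\mc{S}_Y$) or $\phi(I\cup O)\cap X\neq\emptyset$ (your $\mc{S}_2\cup\mc{S}_X\cup\mc{S}_0$), then apply \eqref{e:szv} and \cref{c:leafx} respectively. One minor simplification: your double-counting step is unnecessary, since $\rho_X(v)\leq\mu(X)=\alpha$ holds pointwise for every $v$.
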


\begin{proof}
Note that 	for any $v \in V(G)$
\begin{equation}\label{e:szv}
\Pr[ (\phi \in \mc{S}) \wedge (\phi(I \cup O) \subseteq Y) |\phi(c)=v] \leq \lambda_0 \rho^{m}_Y(v), 
\end{equation}
	Using this and \eqref{e:leafx} we obtain
	\begin{align*}
	 \frac{S}{m+1} &\leq s(G) \leq \bb{E}_{v \in V(G)}[s(c \to v)] \\&= \bb{E}_{v \in V(G)}\s{\Pr[\phi \in \mc{S} \wedge (\phi(I \cup O) \subseteq Y) | \phi(c) = v]}  \\ &+ \bb{E}_{v \in V(G)}\s{\Pr[\phi \in \mc{S} \wedge (\phi(I \cup O) \cap X  \neq \emptyset) | \phi(c) = v]} \\ &\leq \alpha\bb{E}_{x \in X}[s_1(c \to x)] + (1-\x)\bb{E}_{y \in Y}[s_Y(c \to y)] + \lambda_1 \x (1-D)^{m-1}\\ &\leq 	\lambda_0  \x (1-\x)^{m}  + \lambda_0(1-\x)\y^m + \lambda_1 \x (1-D)^{m-1},
	\end{align*}
	as desired.
\end{proof}

The inequalities derived in this section can be used to derive fairly precise bounds on $D,\x,\y,\gamma$ and $S_1$ with errors decaying exponentially as $m$ grows. The following claims are the consequences of such estimates needed in the subsequent analysis. The proofs of these claims are neither short, nor especially difficult or inspiring, and are thus relegated to Appendix \ref{sec:claim}.

\begin{claim}\label{c:alphaU}  
	$\alpha \leq \frac{1}{m}.$	
\end{claim}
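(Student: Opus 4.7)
The plan is to combine the lower bound $s(v)\ge\brm{OPT}(k,\l)$ at a vertex $x\in X$ of minimum degree $D$ with \cref{c:AM}, so that the error term in the latter becomes negligible. I would first derive a lower bound on $D$ close to $1$, and then apply \cref{c:AM} to conclude $(1-\alpha)^m\ge((m-1)/m)^m$, which is exactly $\alpha\le 1/m$.

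For the first step, plugging $\alpha=1/(m+1)$ (and $d=k/m$ when $k>\l$) into the construction from the proof of \cref{l:lower} gives $\brm{OPT}(k,\l)\ge \lambda_0 m^m/(m+1)^{m+1}$, so
\[ S \;\ge\; (m+1)\brm{OPT}(k,\l) \;\ge\; \lambda_0 \left(\frac{m}{m+1}\right)^m. \]
Applying \cref{c:degree} at $x\in X$ with $\rho(x)=D$, and using $D\ge 1/2$ to bound $D(1-D)^{m-1}\le 2^{-m}$, this forces
\[ D^m \;\ge\; \left(\frac{m}{m+1}\right)^m - \frac{\lambda_1}{\lambda_0\,2^m}. \]
For $m\ge 6$ the right hand side is positive, which can be verified using the explicit formula $\lambda_1/\lambda_0 = (m^m/(m-1)^{m-1})\cdot((\l-1)/\l)^{\l-1}$. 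The resulting lower bound on $D$ thus gives an upper bound on $(1-D)^{m-1}$ that decays exponentially in $m$.

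For the second step, I would substitute this estimate into \cref{c:AM}, using $1+\alpha\le 2$, to obtain
\[ \lambda_0(1-\alpha)^m \;\ge\; \lambda_0\left(\frac{m}{m+1}\right)^m - 2\lambda_1(1-D)^{m-1}. \]
It is then enough to verify
\[ \left(\frac{m}{m+1}\right)^m - \left(\frac{m-1}{m}\right)^m \;\ge\; 2\frac{\lambda_1}{\lambda_0}(1-D)^{m-1}, \]
since then $(1-\alpha)^m\ge((m-1)/m)^m$, giving $\alpha\le 1/m$ as desired.

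The hard part will be the numerical verification of this last inequality at the smallest admissible $m$. For large $m$ the left hand side is of order $1/(em)$ while the right hand side decays exponentially in $m$, so the inequality holds comfortably. The tight case is $m=6$ (in particular $\l=1$, where $\lambda_1/\lambda_0=6^6/5^5\approx 14.9$): the derived bound $D\gtrsim 0.74$ produces $(1-D)^{m-1}$ just small enough for the inequality to hold. Handling this properly requires a careful but elementary numerical check for $m\in\{6,7\}$ combined with a uniform analytic estimate for $m\ge 8$, which is presumably the reason the proof is relegated to an appendix.
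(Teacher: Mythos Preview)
Your proposal is correct and follows essentially the same route as the paper: first bound $S\ge\lambda_0(m/(m+1))^m$, then use \cref{c:degree} at a minimum-degree vertex of $X$ to get a lower bound on $D$, and finally feed this into \cref{c:AM} to force $(1-\alpha)^m\ge((m-1)/m)^m$. The only difference is technical: the paper invests in a sharper bound $D\ge(1-1/m^3)\frac{m}{m+1}$ via a convexity argument (checking the inequality fails at two test values of $D$), which yields the uniform estimate $(1-D)^{m-1}<1/(m(m+1)^{m-2})$ and avoids the separate numerical checks at $m\in\{6,7\}$ that your cruder bound $D(1-D)^{m-1}\le 2^{-m}$ necessitates.
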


\begin{claim}\label{c:gamma} 
	$\gamma \geq \x/2.$
\end{claim}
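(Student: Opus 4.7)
Argue by contradiction: assume $\gamma < \alpha/2$. The key object is the one-variable function $h(t) := t^{\ell}(1-\alpha-t)^{k}$; its derivative factors as $h'(t) = t^{\ell-1}(1-\alpha-t)^{k-1}\bigl(\ell(1-\alpha)-mt\bigr)$, so $h$ is strictly increasing on $[0,\gamma^{*}]$ with $\gamma^{*} := \ell(1-\alpha)/m$. Using $\alpha \leq 1/m$ from \cref{c:alphaU} together with $\ell \geq 1$ and $m = k+\ell \geq 6$, a short computation gives $\alpha/2 \leq \gamma^{*}$, and hence $\gamma < \alpha/2$ would force
$$h(\gamma) < h(\alpha/2) = (\alpha/2)^{\ell}(1-3\alpha/2)^{k}.$$
Combined with the chain in \cref{c:AM}, this reduces the task to proving
$$(\alpha/2)^{\ell}(1-3\alpha/2)^{k} + \lambda_{1}(1+\alpha)(1-D)^{m-1} < S \qquad (\star)$$
for every admissible pair $(\alpha, D)$ with $\alpha \in (0,1/m]$.

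Next, I would dispose of the error term $\lambda_{1}(1+\alpha)(1-D)^{m-1}$ by showing $D$ is close to $1$. Applying \cref{c:degree} to any $x \in X$ and invoking the hypothesis $s(x) \geq \brm{OPT}(k,\ell)$ of \cref{l:upper} yields
$\lambda_{0}\rho(x)^{m} + \lambda_{1}\rho(x)(1-\rho(x))^{m-1} \geq (m+1)\brm{OPT}(k,\ell)$.
Since $\rho(x) \geq 1/2$, the second summand is bounded by $\lambda_{1}/2^{m}$, while by \cref{l:lower} (plug $\alpha' = 1/(m+1)$, $d' = k/m$ into the formula for $\brm{OPT}$) we have the explicit lower bound $(m+1)\brm{OPT}(k,\ell) \geq k^{k}\ell^{\ell}/(m+1)^{m}$. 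A direct numerical comparison (facilitated by one bootstrap step, since the left-hand side need not be monotone on $[1/2, 1]$) then forces $1-D$ to be of order $1/m$, so that $\lambda_{1}(1+\alpha)(1-D)^{m-1}$ becomes small compared to $(m+1)\brm{OPT}(k,\ell)$.

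After this, $(\star)$ reduces essentially to the one-variable inequality $(\alpha/2)^{\ell}(1-3\alpha/2)^{k} < (m+1)\brm{OPT}(k,\ell)$ on $\alpha \in (0, 1/m]$. Writing $h_{*}(\alpha) := (\alpha/2)^{\ell}(1-3\alpha/2)^{k}$, the derivative $h_{*}'(\alpha) = (\alpha/2)^{\ell-1}(1-3\alpha/2)^{k-1}(\ell/2 - 3m\alpha/4)$ shows that the maximum of $h_{*}$ on $(0,1/m]$ is attained at the interior critical point $\alpha^{*} = 2\ell/(3m)$ (which lies inside the interval only when $\ell = 1$) or else at the endpoint $\alpha = 1/m$. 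Evaluating $h_{*}$ at these two candidates and comparing with the explicit lower bound $k^{k}\ell^{\ell}/(m+1)^{m}$ completes the verification.

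\medskip
\noindent\textbf{Main obstacle.} The tight step is the final numerical check in the case $\ell = 1$ (so $k \geq 5$). There the factor $(1-3\alpha/2)^{k}$ must be retained in full, because $(\alpha/2)$ on its own can exceed $(m+1)\brm{OPT}(k,\ell)$; crudely bounding $(1-3\alpha/2)^{k}$ by $1$ is fatal. The hypothesis $k + \ell \geq 6$ is precisely what keeps the resulting margin positive, and the argument becomes borderline as $m$ decreases. For $\ell \geq 2$ the additional factor $(\alpha/2)^{\ell-1} \leq (1/(2m))^{\ell-1}$ supplies ample slack, and the inequality is routine.
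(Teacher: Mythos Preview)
Your contradiction strategy is sound and can be completed: monotonicity of $h(t)=t^{\ell}(1-\alpha-t)^{k}$ on $[0,\ell(1-\alpha)/m]$ does hold, the check $\alpha/2\le \ell(1-\alpha)/m$ follows from $\alpha\le 1/m$, and the residual numerical verification of $(\star)$ goes through with margin to spare (for $\ell=1$ the worst case is $h_*(\alpha^*)=k^{k}/\bigl(3(k+1)^{k+1}\bigr)$, and one needs $\bigl((k+2)/(k+1)\bigr)^{k+1}<132/45$, which holds since the left side is at most $e$). The error-term control you sketch is exactly what the paper does in its appendix (Claims~A.3 and~A.6), so no new obstacle arises there.

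The paper's argument, however, avoids all of this case analysis by a single algebraic trick. Instead of using monotonicity to pass from $\gamma$ to $\alpha/2$, it factors
\[
\gamma^{\ell}(1-\alpha-\gamma)^{k}\;\le\;\frac{\gamma}{\alpha+\gamma}\,(\alpha+\gamma)^{\ell}(1-\alpha-\gamma)^{k}\;\le\;\frac{\gamma}{\alpha+\gamma}\,\lambda_0,
\]
using only $\gamma\le\alpha+\gamma$ and \cref{l:xayb}. Combined with the lower bound $\gamma^{\ell}(1-\alpha-\gamma)^{k}\ge S_1\ge\frac{1}{3}\lambda_0$, this gives $\gamma/(\alpha+\gamma)\ge 1/3$, i.e.\ $\gamma\ge\alpha/2$, directly. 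Your route trades this one-line extraction of the ratio $\gamma/(\alpha+\gamma)$ for a maximization over $\alpha$ and a separate treatment of $\ell=1$ versus $\ell\ge 2$; it works, but the paper's version is shorter and makes the constant $1/2$ emerge without tuning.
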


Let 
\begin{equation}\label{e:s2} 
S_2 = S_1 - \ \frac{(m-1)}{(m+1)\binom{m}{k}}(\alpha+\beta)(1-\alpha)(1-D)^{m-2}.
\end{equation}

\begin{claim}\label{c:S2} 
	$S_2 \geq \lambda_1 \frac{1 - \alpha}{\alpha}\beta^{m-1}.$
\end{claim}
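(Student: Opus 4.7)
My plan is to start by combining the last inequality of Claim~\ref{c:AM}, namely $S_1 \geq S - \lambda_1(1+\alpha)(1-D)^{m-1}$, with the definition \eqref{e:s2} of $S_2$. Substituting reduces the target inequality $S_2 \geq \lambda_1\frac{1-\alpha}{\alpha}\beta^{m-1}$ to the single estimate
\begin{equation*}
S \;\geq\; \lambda_1\frac{1-\alpha}{\alpha}\beta^{m-1} + \lambda_1(1+\alpha)(1-D)^{m-1} + \frac{m-1}{(m+1)\binom{m}{k}}(\alpha+\beta)(1-\alpha)(1-D)^{m-2},
\end{equation*}
which I will denote by $(\ast)$. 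The hypothesis of Lemma~\ref{l:upper} supplies the lower bound $S \geq (m+1)\operatorname{OPT}(k,\ell)$, and evaluating the variational formula for $\operatorname{OPT}(k,\ell)$ at the test values $(\alpha_0,d_0)=(\alpha,k/m)$ when $k>\ell$ (and analogously when $k=\ell$) yields a concrete lower bound on $S$ of magnitude $\Theta(\lambda_0\alpha)$, against which the right-hand side of $(\ast)$ will be compared.

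The central step is to extract quantitative upper bounds on $\beta$ and $1-D$ from Claim~\ref{c:B}. Whichever of the two alternatives holds, subtracting the lower bound on $S$ just obtained shows that one of the terms $\lambda_1\alpha(1-D)^{m-1}$, $(\lambda_0+\lambda_1)\beta^m$, or $\frac{m(m-1)}{\binom{m}{k}}\alpha(\alpha+\beta)(1-D)^{m-2}$ must itself be of order at least $\lambda_0\alpha$. Combined with $\alpha\leq 1/m$ from Claim~\ref{c:alphaU}, this forces at least one of $\beta$, $1-D$ to be bounded away from zero, while the lower bound $S_1 \geq S - \lambda_1(1+\alpha)(1-D)^{m-1}$ of Claim~\ref{c:AM}, together with $S_1\leq \lambda_0(1-\alpha)^m$, prevents the other from being too large. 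Lemmas~\ref{l:xayb} and \ref{l:fractions} are used to manipulate the resulting products into forms amenable to comparison with the right-hand side of $(\ast)$. Given these constraints, each term of $(\ast)$ is then estimated: the factor $\frac{1-\alpha}{\alpha}\leq m$ combined with the rapid decay of $\beta^{m-1}$ controls the first term, $\lambda_1(1-D)^{m-1}$ is small compared to $\lambda_0\alpha$ because $\lambda_1/\lambda_0$ is bounded while $(1-D)^{m-1}\to 0$ once $1-D$ is bounded away from $1$, and the third term is of lower order in $(1-D)$ but is multiplied by a small combinatorial factor $(m-1)/\binom{m}{k}$ and by $\alpha+\beta$.

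The main obstacle will be the intermediate regime in which $\beta$ and $1-D$ are both moderately small but neither is negligible, so that all three terms on the right-hand side of $(\ast)$ are comparable in magnitude. Handling this region appears to require a further case split according to whether $\beta^{m-1}$ or $(1-D)^{m-1}$ dominates, together with separate treatment of the cases $k=\ell$ versus $k>\ell$, and possibly a special argument when $\ell=1$ (where $\lambda_1 = k^k/(m-1)^{m-1}$). The flavor of the computation closely parallels the estimates needed in the proofs of Claims~\ref{c:alphaU} and \ref{c:gamma}, relying on the same basic inequalities from Lemmas~\ref{l:xayb}--\ref{l:f2}; this explains why, like those claims, the detailed verification is deferred to the appendix.
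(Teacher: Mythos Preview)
Your overall strategy---normalize everything by $S$ and show that $S_2/S$ is bounded below by a constant while $\frac{\lambda_1}{S}\frac{1-\alpha}{\alpha}\beta^{m-1}$ is much smaller---is exactly what the paper does. However, the plan as written has two concrete gaps.

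First, you assert that ``the factor $\frac{1-\alpha}{\alpha}\leq m$'' in order to control the first term on the right of $(\ast)$. This requires a \emph{lower} bound on $\alpha$, of order $1/(m+1)$, and nothing in your outline produces one: Claim~\ref{c:alphaU} only gives $\alpha\leq 1/m$, and Claims~\ref{c:AM} and~\ref{c:B} give no lower bound on $\alpha$ either. In the paper this lower bound (Claim~A.8, $\alpha \geq (1-\tfrac{2}{(m-1)^2})\tfrac{1}{m+1}$) is obtained from Claim~\ref{c:AL}, which you do not invoke; without it $\frac{1-\alpha}{\alpha}$ is uncontrolled and your estimate of the $\lambda_1\frac{1-\alpha}{\alpha}\beta^{m-1}$ term collapses.

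Second, your use of Claim~\ref{c:B} is inverted. You write that Claim~\ref{c:B} ``forces at least one of $\beta$, $1-D$ to be bounded away from zero,'' but what you need is the opposite: you need both $\beta$ and $1-D$ to be \emph{small}. Claim~\ref{c:B} is a \emph{lower} bound on a combination of these quantities, so by itself it cannot make them small. The paper first bounds $1-D$ using Claim~\ref{c:degree} applied to a vertex of $X$ with $\rho(x)=D$ (this is independent of Claim~\ref{c:B}), and only then feeds that bound into Claim~\ref{c:B} to upper bound $\beta$. Once those two ingredients plus the lower bound on $\alpha$ are in hand, the comparison is a direct numerical estimate with no case split; your anticipated ``intermediate regime'' never arises.
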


\begin{claim}\label{c:final} If $\l=1$, then
	\begin{equation}\label{e:final}
	\frac{\x^{k+1}}{k^k}-\frac{\x^{k+1}}{(k+1)^{k+1}}\frac{\x}{\gamma} \geq  \frac{\y^{k+1}}{2k^k}.
	\end{equation}
\end{claim}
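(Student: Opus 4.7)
My plan is to derive \eqref{e:final} from \cref{c:AM} and \cref{c:S2}, specialized to $\ell=1$. In this case $m=k+1$, $\lambda_0=k^k/(k+1)^{k+1}$, $\lambda_1=1$, and $\binom{m}{k}=k+1$. After clearing the common factor $1/k^k$, the target becomes
\[
2\alpha^{k+1}(\gamma-\lambda_0\alpha)\geq \gamma\beta^{k+1}.
\]

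First I would start from \cref{c:S2}, which with $\ell=1$ specializes to $S_2\geq (1-\alpha)\beta^k/\alpha$. Combined with the definition of $S_2$ (see \eqref{e:s2}) and the inequality $\gamma(1-\alpha-\gamma)^k\geq S_1$ from \cref{c:AM}, this yields
\[
\alpha\gamma(1-\alpha-\gamma)^k \geq (1-\alpha)\beta^k + \frac{k\,\alpha(\alpha+\beta)(1-\alpha)(1-D)^{k-1}}{(k+1)(k+2)}.
\]
At this point one has only $\beta^k$ and not $\beta^{k+1}$. To bridge this gap, I would use the AM-GM inequality $\gamma(1-\alpha-\gamma)^k\leq \lambda_0(1-\alpha)^{k+1}$ (the first inequality in \cref{c:AM}) in combination with the display above to derive the auxiliary bound $\beta\leq (\alpha\lambda_0)^{1/k}(1-\alpha)$, and then multiply the display through by $\beta$ to obtain an inequality whose left-hand side is $\alpha\beta\gamma(1-\alpha-\gamma)^k$ and whose right-hand side dominates $(1-\alpha)\beta^{k+1}$.

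Next, I would combine the resulting inequality with \cref{c:gamma} (so that $\lambda_0\alpha/\gamma\leq 2\lambda_0$, and correspondingly $\gamma-\lambda_0\alpha\geq \gamma(1-2\lambda_0)$) and \cref{c:alphaU} (so that $\alpha\leq 1/(k+1)$). Careful algebraic manipulation, keeping track of the correction term involving $(1-D)^{k-1}$, should reduce the estimate to $\gamma\beta^{k+1}\leq 2\alpha^{k+1}(\gamma-\lambda_0\alpha)-\textup{(nonnegative correction)}$, using that $1-D$ is exponentially small in $k$ by the estimates established for the proof of \cref{c:alphaU} earlier in Appendix~\ref{sec:claim}.

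The main obstacle is the mismatch in powers of $\beta$: \cref{c:S2} directly gives $\beta^{m-1}=\beta^k$, while the target requires $\beta^{k+1}$. Closing this gap relies on the auxiliary bound on $\beta$ described above, and obtaining the precise constant $1/2$ on the right-hand side of the target requires exploiting $\gamma\geq \alpha/2$ to its full strength (so that $1-\lambda_0\alpha/\gamma\geq 1-2\lambda_0$, which for $k\geq 1$ is bounded away from $0$). The remaining computation is elementary but lengthy, which fits the character of the rest of Appendix~\ref{sec:claim}.
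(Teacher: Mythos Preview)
Your plan has a genuine gap: the auxiliary bound $\beta \leq (\alpha\lambda_0)^{1/k}(1-\alpha)$ that you extract from \cref{c:AM} and \cref{c:S2} is far too weak to close the argument. Since $\alpha \leq 1/(k+1)$ and $\lambda_0 = k^k/(k+1)^{k+1}$, one has $(\alpha\lambda_0)^{1/k} \to 1$ as $k \to \infty$; for $k=5$ your bound gives roughly $\beta \lesssim 0.34$. But the target, rewritten as $(1-\lambda_0\,\alpha/\gamma)(\alpha/\beta)^{k+1}\geq 1/2$, requires $\beta/\alpha$ to be at most about $1.1$, i.e.\ $\beta \lesssim 0.16$. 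Structurally, the chain you propose goes from $\beta^k \lesssim \alpha$ to (after multiplying by $\beta$) $\beta^{k+1} \lesssim \alpha\beta$; to reach $\beta^{k+1} \lesssim \alpha^{k+1}$ you are still missing a factor of $(\alpha/\beta)^{k}$, and a bound of the form $\beta \leq (\alpha\lambda_0)^{1/k}$ cannot supply it. No amount of ``careful algebraic manipulation'' of the $(1-D)^{k-1}$ correction term fixes this order-of-magnitude deficit.

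The paper's proof takes a different and much more direct route, relying on an ingredient your plan omits entirely: a \emph{lower} bound on $\alpha$. From \cref{c:AL} one derives (Claim~\ref{A:alphaL}) that $\alpha \geq \bigl(1 - \tfrac{2}{(m-1)^2}\bigr)\tfrac{1}{m+1}$, and from \cref{c:B} one derives (Claim~\ref{A:beta}) that $\beta \leq \bigl(1+\tfrac{2}{m(m-1)^3}\bigr)\tfrac{1}{m+1}$. Together these give $\alpha/\beta \geq 1 - \tfrac{2}{m(m-2)}$ and hence $(\alpha/\beta)^m \geq e^{-6/11}$. Combined with $1 - \lambda_0\,\alpha/\gamma \geq 1 - 2\lambda_0 \geq 20203/23328$ from \cref{c:gamma}, the product exceeds $1/2$. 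The point is that \cref{c:AM} and \cref{c:S2} alone only relate $\beta^{m-1}$ to $\alpha$; to control $(\alpha/\beta)^m$ one needs the separate two-sided estimates on $\alpha$ and $\beta$ established earlier in the appendix.
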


\subsection{Contribution of non adjacencies between parts}

We now embark on our quest of deriving an upper bound on $s(G)$, which we obtain by bounding $s_1(G),s_2(G),s_X(G),s_Y(G)$ and $s_0(G)$ separately. First we show that the missing edges between $X$ and $Y$ lead to more losses in $s_1(G)$ than gains in $s_0(G)$. 

\begin{claim}\label{c:nonedge} For a pair of non-adjacent $x \in X, y\in Y$ we have
	$$ s(x,y) \leq \frac{m-1}{(m+1)\binom{m}{k}}(\alpha+\beta)(1-D)^{m-2}. $$ 
\end{claim}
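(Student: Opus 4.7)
The plan is to count the maps $\phi \in \mathcal{S}$ with $\{x,y\} \subseteq \mathrm{Im}(\phi)$ directly. Since $x \not\sim y$ in $G$ but the center $c$ of $S_{k,\ell}$ is adjacent in $S_{k,\ell}$ to every leaf, neither $x$ nor $y$ can equal $\phi(c)$, so both must lie in $\phi(I \cup O)$. I would then group these $\phi$ by their image set $U = \mathrm{Im}(\phi)$: for each $(m+1)$-subset $U \subseteq V(G)$ with $x,y \in U$ and $G[U] \cong S_{k,\ell}$, there are exactly $k!\,\ell!$ isomorphisms $\phi: V(S_{k,\ell}) \to U$, obtained by independently permuting the $k$ out-leaves and the $\ell$ in-leaves of $S_{k,\ell}$.

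Such an image set is specified by choosing a center $v_c \in N(x) \cap N(y)$ together with an unordered collection of $m-2$ further leaves in $V(G) \setminus N(x)$ (since the leaves of an induced star are pairwise non-adjacent, and in particular non-adjacent to $x$). Using $y \in Y$ to bound $\rho(y) = \rho_X(y) + \rho_Y(y) \leq \alpha + \beta$, the center has at most $(\alpha + \beta)n$ choices; using $x \in X$ to bound $|V(G) \setminus N(x)| \leq (1-D)n$, the remaining leaves can be chosen in at most $\binom{(1-D)n}{m-2} \leq ((1-D)n)^{m-2}/(m-2)!$ ways. Multiplying by $k!\,\ell!$ and invoking the identity $k!\,\ell!/(m-2)! = m(m-1)/\binom{m}{k}$ yields
\[
|\{\phi \in \mathcal{S} : \{x,y\} \subseteq \mathrm{Im}(\phi)\}| \;\leq\; \frac{m(m-1)}{\binom{m}{k}}\,(\alpha+\beta)(1-D)^{m-2}\, n^{m-1}.
\]

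To convert this count into $s(x,y)$, I would divide by $n^{m+1}$ to obtain $\Pr[\phi \in \mathcal{S} \wedge \{x,y\} \subseteq \mathrm{Im}(\phi)]$ and then by $\Pr[\{x,y\} \subseteq \mathrm{Im}(\phi)] = (1+o(1))\,m(m+1)/n^2$, a routine inclusion--exclusion valid in the $n \to \infty$ regime used throughout the paper. The result is exactly the claimed bound $\tfrac{m-1}{(m+1)\binom{m}{k}}(\alpha+\beta)(1-D)^{m-2}$. The main obstacle is to correctly extract the factor $1/\binom{m}{k}$: it arises from the directional constraints of $S_{k,\ell}$ and reflects that among the $m!$ permutations sending the leaves of $S_{k,\ell}$ to the leaves of any fixed image $U$ with $G[U] \cong S_{k,\ell}$, only $k!\,\ell!$ respect the in/out type of each leaf. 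The identity $k!\,\ell!/(m-2)! = m(m-1)/\binom{m}{k}$ is precisely what brings this combinatorial ratio into the final estimate.
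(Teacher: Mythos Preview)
Your proof is correct and follows essentially the same idea as the paper's: both arguments observe that $x$ and $y$ must be mapped to leaves, bound the choice of the center by $\rho(y)\le\alpha+\beta$, bound the remaining $m-2$ leaves by non-neighbours of $x$ (at most $(1-D)n$ each), and extract the factor $1/\binom{m}{k}$ from the directional constraints. The only cosmetic difference is packaging: the paper phrases everything as a conditional probability (first bounding the ``undirected isomorphism'' probability by $\frac{m-1}{m+1}(\alpha+\beta)(1-D)^{m-2}$ and then multiplying by $1/\binom{m}{k}$), whereas you count image sets, multiply by $k!\ell!$, and use the identity $k!\ell!/(m-2)!=m(m-1)/\binom{m}{k}$ --- the two computations are algebraically identical, and your explicit handling of the $(1+o(1))$ in $\Pr[\{x,y\}\subseteq\mathrm{Im}(\phi)]$ matches the level of rigor used throughout the paper.
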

\begin{proof}
First, as in the proof of \cref{c:B}, we bound from above the probability that a map $\phi \in \Phi$ with $x,y \in \Im(\phi)$ maps the edges of $S_{k,\l}$ to edges of $G$ (possibly reversing the directions) and non-edges to non-edges. Thus preimages of both $x$ and $y$ in $\phi$ must be leaves of $\SK$, which happens with probability $\frac{m-1}{m+1}$. The center must be mapped to a neighbor of $y$, which happens with probability $\rho(y) \leq \alpha+\beta$, and the remaining $m-2$ leaves are mapped to non-neighbors of $x$, which happens with  probability at most $(1-D)^{m-2}$, subject to the previous constraints. Thus the probability that $\phi$ has above properties is upper bounded by $$\frac{m-1}{m+1}(\alpha+\beta)(1-D)^{m-2}.$$ 
The probability that the map $\phi$ as above preserves the edge directions is exactly $\frac{1}{\binom{m}{k}}$, yielding the claimed bound on $s(x,y)$.
\end{proof}

Let $$d_0 =\frac{1}{n^2}|\{(x,y) | x \in X, y\in Y, xy,yx \not \in E(G)\}|,$$
denote the probability that a random pair of vertices corresponds to a non-adjacent pair of vertices where the first one is in $X$ and the second one is in $Y$.

\begin{claim}\label{c:nonedge2} We have
	$$ s_0(G) \leq \frac{(m-1)m}{(m+1)\binom{m}{k}}(\alpha+\beta)(1-D)^{m-2}d_0.$$ 
\end{claim}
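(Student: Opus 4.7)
The plan is to lift the local estimate in \cref{c:nonedge} into a global bound by summing over all non-adjacent pairs $(x,y)\in X\times Y$. The driving observation is: whenever $\phi\in\mc{S}_0$, by definition $\phi(I\cup O)$ meets both $X$ and $Y$, so one can pick $x\in\phi(I\cup O)\cap X$ and $y\in\phi(I\cup O)\cap Y$; since $x$ and $y$ are then the images of two distinct leaves of $\SK$ (non-adjacent in $\SK$) and $\phi$ is an isomorphism onto its image, we have $xy,yx\notin E(G)$. Conversely, for any non-adjacent $(x,y)\in X\times Y$, every $\phi\in\mc{S}$ with $x,y\in\Im(\phi)$ is forced to send both to leaves---the center $c$ is adjacent to every other vertex of $\SK$, so $\phi^{-1}(x),\phi^{-1}(y)\in I\cup O$---and therefore belongs to $\mc{S}_0$.

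Concretely, I would proceed as follows. First, upper-bound $s_0(G)$ by a sum over witnessing pairs,
$$s_0(G)\le\sum_{\substack{x\in X,\,y\in Y\\ xy,\,yx\notin E(G)}}\Pr[\phi\in\mc{S},\,x,y\in\Im(\phi)],$$
using the correspondence above. Second, convert each term via $\Pr[\phi\in\mc{S},\,x,y\in\Im(\phi)]=s(x,y)\cdot\Pr[x,y\in\Im(\phi)]$, where the paper's convention gives $\Pr[x,y\in\Im(\phi)]=m(m+1)/n^2$ (the convention that turns \eqref{eq:sG} into an identity). Third, apply \cref{c:nonedge} to get $s(x,y)\le\tfrac{m-1}{(m+1)\binom{m}{k}}(\alpha+\beta)(1-D)^{m-2}$. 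Finally, collect the $n^2d_0$ summands to assemble the claimed bound.

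The step I expect to be the main obstacle is extracting the denominator factor $m+1$ with the sharp constant. A crude union bound overcounts each $\phi\in\mc{S}_0$ by $a(m-a)$ witnessing pairs, where $a=|\phi(I\cup O)\cap X|\in\{1,\dots,m-1\}$, and merely using $a(m-a)\ge m-1$ yields a bound that exceeds the target by a factor of $(m+1)/(m-1)$. The cleanest fix is to mimic \cref{c:nonedge}'s proof in the present aggregated setting rather than treating it as a black box: condition on the preimages of $x$ and $y$ lying in $I\cup O$ (this is precisely where the factor $(m-1)/(m+1)$ enters \cref{c:nonedge}) and then separately track the star-structure and direction constraints, so that the factors combine without slack once the sum over non-adjacent pairs has been performed. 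The careful bookkeeping of passing between ``$x,y\in\Im(\phi)$'' and ``$\phi(i)=x,\,\phi(j)=y$ for particular leaf positions $(i,j)$'' is where essentially all the technical work lies.
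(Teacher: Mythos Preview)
Your approach is essentially the paper's: the paper generates $\phi$ by first picking an ordered tuple $(v_1,\dots,v_{m+1})\in V(G)^{m+1}$ and then a random bijection $\{1,\dots,m+1\}\to V(\SK)$, and compares lower and upper bounds on the probability that $\phi\in\mc{S}$ with $v_1\in X$, $v_2\in Y$ non-adjacent. That is exactly your ``sum over non-adjacent pairs and divide by the overcount'', repackaged.

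You also correctly locate the obstacle. Conditioning on a fixed $\phi\in\mc{S}_0$ with $a$ leaf-images in $X$, the pair $(v_1,v_2)$ is uniform over the $(m+1)m$ ordered pairs of distinct elements of $\Im(\phi)$, and exactly $a(m-a)\ge m-1$ of these pairs are non-adjacent with first coordinate in $X$ and second in $Y$. So the lower bound on the joint probability is $s_0(G)\cdot\tfrac{m-1}{(m+1)m}$, which together with \cref{c:nonedge} yields
\[
s_0(G)\le \frac{m}{\binom{m}{k}}(\alpha+\beta)(1-D)^{m-2}d_0,
\]
off from the stated constant by exactly the factor $\tfrac{m+1}{m-1}$ you computed.

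Your proposed fix, however, does not close this gap. The factor $\tfrac{m-1}{m+1}$ in \cref{c:nonedge} is precisely $\Pr[\phi^{-1}(x),\phi^{-1}(y)\in I\cup O\mid x,y\in\Im(\phi)]$; once you pass from $\Pr[x,y\in\Im(\phi)]\approx\tfrac{(m+1)m}{n^2}$ to $s(x,y)$ you have already spent it, and ``reopening'' the proof of \cref{c:nonedge} just moves this same factor from one line to another. No slack is being wasted: the entire loss comes from the inequality $a(m-a)\ge m-1$, which is sharp at $a\in\{1,m-1\}$, and no rearrangement of the bookkeeping inside \cref{c:nonedge} changes that. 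For what it is worth, the paper's own proof writes the overcount correction as $\tfrac{m-1}{m(m-1)}$ rather than $\tfrac{m-1}{(m+1)m}$, so it appears to suffer the same gap behind what looks like an arithmetic slip; the weaker constant $\tfrac{m}{\binom{m}{k}}$ that both arguments actually deliver should still be enough for the downstream applications after mild adjustment of the numerical estimates.
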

\begin{proof} We consider selecting a uniformly random map $\phi \in \Phi$ as follows. First, we select vertices $(v_1,v_2,\ldots,v_{m+1})$  independently and uniformly at random. Then we select a random permutation $(u_1,u_2,\ldots,u_{m+1})$ of $V(S_{k,\l})$, and let $\phi(u_i) = v_i$. The probability that the resulting map is in $\mc{S}_0$ is $s_0(G)$. As every map in  $\mc{S}_0$ contains at least $m-1$ pairs of leaves such that the first one is mapped to $X$ and the second one in $Y$, we have that with probability at least $s_0(G)\frac{m-1}{m(m-1)}=s_0(G)/m$, the resulting map satisfies $v_1 \in X, v_2 \in Y$ and $v_1,v_2$ are non-adjacent. On the other hand, by \cref{c:nonedge} this probability is at most $$ \frac{m-1}{(m+1)\binom{m}{k}}(\alpha+\beta)(1-D)^{m-2}\cdot d_0,$$ 
 implying the claim.  
\end{proof}

\begin{claim}\label{l:bound2l} For every $x \in X$ we have
	\begin{equation}\label{e:six0}
s_1(c \to x) \leq (1-\alpha)^m f\s{1-\frac{\d^{-}_{Y}(x)}{1-\alpha}} - \frac{m\d^0_Y(x)}{(1-\alpha)}S_1.
	\end{equation}	
\end{claim}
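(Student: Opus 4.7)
The proof opens with the crude upper bound that any $\phi \in \mathcal{S}_1$ with $\phi(c) = x$ must map the $k$ out-leaves of $S_{k,\ell}$ into $N^+_Y(x)$ and the $\ell$ in-leaves into $N^-_Y(x)$, which gives
\[s_1(c\to x) \leq \rho^+_Y(x)^k\, \rho^-_Y(x)^\ell.\]
Normalize by setting $\bar a = \rho^+_Y(x)/(1-\alpha)$, $\bar b = \rho^-_Y(x)/(1-\alpha)$, and $\bar w = \rho^0_Y(x)/(1-\alpha)$, so that $\bar a + \bar b + \bar w = 1$ in the absence of anti-parallel edges between $x$ and $Y$. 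After dividing the target inequality by $(1-\alpha)^m$, it becomes $\bar a^k \bar b^\ell \leq f(1-\bar b) - m\bar w\, S_1/(1-\alpha)^m$. Since $S_1/(1-\alpha)^m \leq \bar a^k \bar b^\ell$ by the definition of $S_1$, it is enough to prove the sharpened inequality
\[f(1-\bar b)\;\geq\;\bar a^k \bar b^\ell\,(1 + m\bar w).\]

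I would establish this by splitting on the value of $z := 1-\bar b$ (and writing $\bar a = z - \bar w$). In the case $z \leq k/m$, \cref{l:f} supplies $f(z) \geq z^k(1-z)^\ell = z^k \bar b^\ell$, so it suffices to prove the purely algebraic $z^k \geq (z-\bar w)^k(1+m\bar w)$; differentiating the right-hand side in $\bar w$ yields $(z-\bar w)^{k-1}\bigl[mz - k - m(k+1)\bar w\bigr]$, which is non-positive when $z \leq k/m$ and $\bar w \geq 0$, so the right-hand side attains its maximum at $\bar w = 0$, where equality holds. In the case $z > k/m$ we have $f(z) = \lambda_0$, and \cref{l:xayb} applied to $\bar a + \bar b = 1 - \bar w$ gives $\bar a^k \bar b^\ell \leq \lambda_0\,(1-\bar w)^m$; the goal then reduces to the one-variable statement $(1-\bar w)^m(1+m\bar w) \leq 1$ on $[0,1]$, which follows from the fact that its derivative equals $-m(m+1)\bar w(1-\bar w)^{m-1} \leq 0$ together with the boundary value $1$ at $\bar w = 0$.

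The subtle point I expect to be the main obstacle is the treatment of anti-parallel edges: if some $y \in Y$ satisfies both $xy,yx \in E(G)$, then $\bar a + \bar b + \bar w > 1$ strictly, and such $y$ cannot serve as a leaf of an isomorphic copy of $S_{k,\ell}$ centered at $x$, so the crude bound on $s_1(c\to x)$ is wasteful. I would handle this by using the sharper inequality $s_1(c\to x) \leq y_+^k\,\rho^-_Y(x)^\ell$, where $y_+ := (1-\alpha) - \rho^-_Y(x) - \rho^0_Y(x)$ counts the only-out-neighbors of $x$ in $Y$, and redefining $\bar a := y_+/(1-\alpha)$, which restores the identity $\bar a + \bar b + \bar w = 1$. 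The casework above then applies verbatim; the only remaining care is to re-verify the bound $S_1/(1-\alpha)^m \leq \bar a^k \bar b^\ell$ in this regime, using the definition of $S_1$ together with the fact that the anti-parallel vertices contribute no more than $\rho^-_Y(x)$ to the in-neighbourhood.
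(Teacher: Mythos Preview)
Your main argument (the first two paragraphs) is correct and takes a genuinely different route from the paper. Where you split on the value of $z=1-\bar b$ and handle each regime by an elementary one-variable calculus check, the paper gives a single unified estimate. Writing $u=\rho^0_Y(x)/(\rho^+_Y(x)+\rho^-_Y(x)) \geq \bar w$, it observes that
\[
(1-\alpha)^m f\!\left(1-\tfrac{\rho^-_Y(x)}{1-\alpha}\right)\;\geq\;(1+u)^m\,(\rho^+_Y(x))^k(\rho^-_Y(x))^\ell,
\]
by combining the monotonicity of $f$ with the bound $f(t)\geq t^k(1-t)^\ell$ at the scaled point $t=\rho^+_Y(x)/(\rho^+_Y(x)+\rho^-_Y(x))$; the claim then reduces to $(1+u)^m\geq 1+m\bar w$, which holds since $u\geq \bar w$. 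Your case split is more transparent and avoids the scaling trick, at the price of two separate derivative computations; the paper's argument is slicker but somewhat unmotivated.

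On your last paragraph: the paper also silently uses $\rho^+_Y(x)+\rho^-_Y(x)+\rho^0_Y(x)=1-\alpha$, so the anti-parallel-edge concern is shared. Your proposed fix, however, does not close the gap: with $\bar a$ redefined as $y_+/(1-\alpha)$ you need $S_1\leq y_+^{\,k}\rho^-_Y(x)^\ell$, but $S_1$ is defined via the full out-degree $\rho^+_Y(x)\geq y_+$, so this inequality is \emph{not} implied by the definition of $S_1$, and the vague remark about ``anti-parallel vertices contributing no more than $\rho^-_Y(x)$'' does not supply it. In the context of the paper the issue is harmless --- in the reduction of \cref{t:main2} to \cref{l:upper} one may take the extremal $G$ to be oriented, since deleting one arc of an anti-parallel pair can only increase the number of induced copies of $S_{k,\ell}$ --- but your paragraph as written does not actually furnish the missing verification.
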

\begin{proof}
	By \cref{l:f} the function $f$ is non-decreasing and so we have
	$$f\s{1-\frac{\d^{-}_{Y}(x)}{1-\alpha}} = f\s{\frac{\d^{+}_{Y}(x)+\d^{0}_Y(x)}{1-\alpha}} \geq f\s{ \frac{\d^{+}_Y(x)}{1 -\alpha} \s{1+\frac{\d^0_Y(x)}{\d^+_Y(x)+\d^-_Y(x)} } }. $$
As $$ 1 - \frac{\d^{+}_Y(x)}{1 -\alpha} \s{1+\frac{\d^0_Y(x)}{\d^+_Y(x)+\d^-_Y(x)} } = \frac{\d^{-}_Y(x)}{1 -\alpha} \s{1+\frac{\d^0_Y(x)}{\d^+_Y(x)+\d^-_Y(x)} },  $$
using the lower bound on $f$ from  \cref{l:f} we further have
 $$ (1-\x)^mf\s{ \frac{\d^{+}_Y(x)}{1 -\alpha} \s{1+\frac{\d^0_Y(x)}{\d^+_Y(x)+\d^-_Y(x)} } } \geq  \s{1 + \frac{\d^0_Y(x)}{\d^+_Y(x)+\d^-_Y(x)}}^m  (\d^{+}_Y(x))^k(\d^{-}_Y(x))^\l.$$
Using the above lower bound on the second term of \eqref{e:six0} and the inequalities $s_1(c \to x) \leq (\d^{+}_Y(x))^k(\d^{-}_Y(x))^\l$, $S_1 \leq (\d^{+}_Y(x))^k(\d^{-}_Y(x))^\l$ for the other two terms, and dividing by $(\d^{+}_Y(x))^k(\d^{-}_Y(x))^\l$, we reduce \eqref{e:six0} to
$$1 \leq  \s{1 + \frac{\d^0_Y(x)}{\d^+_Y(x)+\d^-_Y(x)}}^m  - \frac{m\d^0_Y(x)}{(1-\alpha)},$$
which holds as $\d^+_Y(x)+\d^-_Y(x) \leq 1-\alpha$.
\end{proof}

Let $$d = 1-\bb{E}_{x \in X }\s{\frac{\d^{-}_{Y}(x)}{1-\alpha}} = 1 - \bb{E}_{y \in Y }\s{\frac{\d^{+}_{X}(y)}{\alpha}}.$$
Thus $d$ is the probability that choosing a uniformly random $x \in X$ and a uniformly random $y \in Y$ we have $yx \not \in E(G)$.

\begin{claim}\label{c:bound2l} 
	\begin{equation}\label{e:s1i}
	s_1(G) \leq \alpha(1-\alpha)^m f(d) - \frac{m}{1 -\alpha} S_1 d_0.
	\end{equation}	
\end{claim}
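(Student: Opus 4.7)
The plan is to obtain \eqref{e:s1i} by averaging the pointwise bound \eqref{e:six0} from \cref{l:bound2l} over $x \in X$, and then using concavity of $f$ together with a double-counting identification of the resulting averages with $d$ and $d_0$.

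First, I would use \eqref{eq:sG} with the choice $z = c$ together with the observation that any map of Type $1$ satisfies $\phi(c)\in X$, hence $s_1(c\to v) = 0$ whenever $v \in Y$. This gives
\[
 s_1(G) \;=\; \sum_{v \in V(G)} \frac{s_1(c \to v)}{n} \;=\; \alpha \cdot \bb{E}_{x\in X}[s_1(c \to x)].
\]
Applying \cref{l:bound2l} to each term yields
\[
s_1(G) \;\le\; \alpha(1-\alpha)^m \cdot \bb{E}_{x\in X}\!\left[f\!\left(1 - \tfrac{\rho^-_Y(x)}{1-\alpha}\right)\right] \;-\; \frac{m\,S_1}{1-\alpha}\cdot \alpha\,\bb{E}_{x\in X}[\rho^0_Y(x)].
\]

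Next, I would handle the two averages separately. For the first, since $f$ is concave on $[0,1]$ by \cref{l:f}, Jensen's inequality gives
\[
\bb{E}_{x\in X}\!\left[f\!\left(1 - \tfrac{\rho^-_Y(x)}{1-\alpha}\right)\right] \;\le\; f\!\left(1 - \bb{E}_{x\in X}\!\left[\tfrac{\rho^-_Y(x)}{1-\alpha}\right]\right) \;=\; f(d),
\]
where the last equality uses the definition of $d$ (the formulation as an average over $x\in X$, which coincides with the average over $y\in Y$ of $\rho^+_X(y)/\alpha$ by summing the indicator of arcs $yx$ in two ways).

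For the second average, I would verify the identity $\alpha\cdot \bb{E}_{x\in X}[\rho^0_Y(x)] = d_0$ by a direct double count: expanding
\[
\alpha \cdot \bb{E}_{x\in X}[\rho^0_Y(x)] = \frac{1}{n}\sum_{x\in X}\rho^0_Y(x) = \frac{1}{n^2}\,\big|\{(x,y)\in X\times Y : xy,yx\notin E(G)\}\big| = d_0.
\]
Combining the two estimates gives exactly \eqref{e:s1i}. There is no genuine obstacle here; the only point to get right is keeping track of which averaging convention defines $d$ (so that Jensen lands precisely at $f(d)$), and recognizing that summing $\rho^0_Y(x)$ over $X$ and dividing by $n$ produces $d_0$ rather than some other normalization of the non-edge density between the parts.
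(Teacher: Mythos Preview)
Your proof is correct and follows essentially the same approach as the paper: averaging \eqref{e:six0} over $x\in X$, applying Jensen via the concavity of $f$ from \cref{l:f} to obtain $f(d)$, and identifying $\alpha\,\bb{E}_{x\in X}[\rho^0_Y(x)]$ with $d_0$. The paper's version is slightly terser but the argument is identical.
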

\begin{proof} Note that $d_0 = \alpha \bb{E}_{x \in X} \rho^0_Y(x)$. Using \eqref{e:six0} and the concavity of $f$ we have 
\begin{align*}
s_1(G) &= \bb{E}[s_1(c \to v)] = \alpha \bb{E}_{x \in X}[s_1(c \to x)]   \\
& \le \alpha (1-\alpha)^m \bb{E}_{x \in X}\left[f\s{1-\frac{\d^{-}_{Y}(x)}{1-\alpha}}\right] - \frac{m \alpha\bb{E}_{x \in X} \d^0_Y(x)}{(1-\alpha)}S_1 \\ &\leq \alpha (1-\alpha)^m f(d) - \frac{m}{1 -\alpha}S_1 d_0,
\end{align*}
as desired.
\end{proof}	

\subsection{Contribution of edges in $Y$}

Our next goal is to show that the edges in $Y$ contribute more losses to $s_1(G)$ than gains to $s_Y(G)$.

For $Z \subseteq V(G)$, let  $\d(Z)=\frac{|E(G[Z])|} {\binom{|Z|}{2}}$ denote the density of edges in $Z$.

\begin{lem}\label{l:baddensity} If $\l \geq 2$ then
	for any $Z  \subseteq V(G)$ and any $v \in V(G)\setminus Z$ we have $$\Pr[(\phi \in \mc{S}) \wedge \phi(I \cup O) \subseteq Z |  \phi(c)=v] \leq (1-\d(Z))(\mu(Z) - \rho^{-}_Z(v))^{k}(\rho^{-}_Z(v))^{\l}\leq (1-\d(Z))\lambda_0 (\mu(Z))^m.$$
\end{lem}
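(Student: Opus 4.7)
The plan is to bound the conditional probability by separating the contributions from the structural constraints (in-leaves and out-leaves landing in the correct neighborhoods) and the non-adjacency constraint (the images of the leaves must form an independent set in the underlying graph of $G$).

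First I would observe that $\phi \in \mc{S}$ with $\phi(c) = v$ and $\phi(I \cup O) \subseteq Z$ imposes three simultaneous conditions: (i) $\phi(I) \subseteq N^-_Z(v)$; (ii) $\phi(O) \subseteq N^+_Z(v)$; and (iii) $\phi(I \cup O)$ is an independent set in the underlying graph of $G$. Conditions (i) and (ii) alone occur with probability $(\rho^-_Z(v))^\l(\rho^+_Z(v))^k$. Since $v \notin Z$ and $N^+_Z(v), N^-_Z(v) \subseteq Z$ are disjoint (as there is no digon through $v$ contributing to any copy of $\SK$), we get $(\rho^+_Z(v))^k \leq (\mu(Z) - \rho^-_Z(v))^k$, producing the structural factor appearing in the claim.

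For condition (iii), I would apply a Markov-type argument. Conditional on the images of the leaves lying in $Z$, the expected number of edges of $G$ (in the underlying graph) among them is close to $\binom{m}{2}\rho(Z)$, and since this number is bounded by $\binom{m}{2}$, Markov's inequality yields that the probability of zero edges (i.e.\ independence) is at most $1-\rho(Z)$, up to a lower-order $1/|Z|$ correction. The hypothesis $\l \geq 2$ is essential: it guarantees a pair of in-leaves over which the Markov step can be applied so that the resulting density matches $\rho(Z)$ rather than a local density inside $N^-_Z(v)$. Multiplying the structural factor and this non-adjacency factor yields the first inequality. For the second inequality, apply \cref{l:xayb} with $a=k$, $b=\l$, $x = \mu(Z) - \rho^-_Z(v)$, $y = \rho^-_Z(v)$, giving $x^k y^\l \leq \lambda_0(x+y)^m = \lambda_0 \mu(Z)^m$; then multiply by $(1-\rho(Z))$.

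The hardest part will be combining the structural and non-adjacency factors multiplicatively to obtain exactly the coefficient $(1-\rho(Z))$. A direct Markov bound on the conditional probability of independence (after conditioning on the structural events) naturally produces a coefficient expressed in terms of edge densities inside $N^-_Z(v)$ and $N^+_Z(v)$, not the global $\rho(Z)$. Overcoming this obstacle will require either an averaging argument over all pairs of leaves with weights proportional to the corresponding subset sizes in $Z$ (so that the combined coefficient reduces to $\rho(Z)$), or a careful exploitation of the slack between $(\rho^+_Z(v))^k$ and $(\mu(Z) - \rho^-_Z(v))^k$; the presence of $\l \geq 2$ is what makes at least one of these routes available.
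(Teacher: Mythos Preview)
Your overall structure is right, and the second inequality via \cref{l:xayb} is handled correctly. The gap is in obtaining the exact factor $(1-\rho(Z))$, and your explanation of the role of $\ell\geq 2$ is off.

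The Markov-type argument you sketch does not deliver $(1-\rho(Z))$. After relaxing out-leaves to land in $Z_2:=Z\setminus N^-_Z(v)$ and writing $Z_1:=N^-_Z(v)$, the expected number of edges among the $m$ leaf images is $\binom{\ell}{2}\rho_1+\binom{k}{2}\rho_2+k\ell\,\rho_{12}$, where $\rho_1,\rho_2,\rho_{12}$ are the edge densities within $Z_1$, within $Z_2$, and between them. Dividing by $\binom{m}{2}$ and applying your Markov step gives $\Pr[\text{independent}]\leq 1-\theta$ where $\theta$ is the convex combination of $\rho_1,\rho_2,\rho_{12}$ with weights proportional to $\binom{\ell}{2},\binom{k}{2},k\ell$. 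But $\rho(Z)$ is the convex combination with weights proportional to $\binom{|Z_1|}{2},\binom{|Z_2|}{2},|Z_1||Z_2|$. These are different: if, say, $|Z_1|/|Z|$ is much larger than $\ell/m$ and all edges of $G[Z]$ lie inside $Z_1$, then $\theta<\rho(Z)$ and your bound is strictly weaker than the claim. Your ``route 1'' would need weights governed by $|Z_1|,|Z_2|$, which cannot be attached to leaf pairs in any natural way; your ``route 2'' (slack between $(\rho^+_Z(v))^k$ and $(\mu(Z)-\rho^-_Z(v))^k$) is addressing a different discrepancy and does not repair this one.

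The paper replaces averaging by pigeonhole: since $\rho(Z)$ is a convex combination of $\rho_1,\rho_2,\rho_{12}$, at least one of them is $\geq\rho(Z)$. One then imposes non-adjacency on a \emph{single} leaf pair mapping into the corresponding region. If $\rho_1\geq\rho(Z)$, fix two in-leaves (this is exactly where $\ell\geq 2$ is used) and bound the probability that they land on distinct non-adjacent vertices of $Z_1$ by $(1-\rho(Z))(\rho^-_Z(v))^2$; the remaining leaves contribute $(\mu(Z)-\rho^-_Z(v))^k(\rho^-_Z(v))^{\ell-2}$. If $\rho_2\geq\rho(Z)$ one uses two out-leaves (available since $k\geq\ell\geq 2$), and if $\rho_{12}\geq\rho(Z)$ one uses a mixed pair; all three cases give the same bound. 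Thus $\ell\geq 2$ is not what makes a Markov step globalize the density; it is what guarantees that a pair of the required type exists in whichever case the pigeonhole lands on.
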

\begin{proof}
	Let $Z_1 = N^{-}_Z(v)$, $Z_2 = Z -Z_1$. Then either \begin{itemize} \item 
		at least $\d(Z)\binom{|Z_1|}{2}$ pairs of vertices in $G_1$ are adjacent, or \item at least  $\d(Z)\binom{|Z_2|}{2}$ pairs of vertices in $Z_2$ are adjacent,  or \item at least $\d(Z)|Z_1||Z_2|$ pairs $(z_1,z_2)$ such that $z_1 \in Z_1,z_2 \in Z_2$ are adjacent.
	\end{itemize}
Suppose the first case holds. (The remaining cases are analogous.)
We choose $\phi$ by first choosing images of two vertices in $I$. They must be mapped to a pair of non-adjacent vertices in $Z_1$  which happens with probability at most $(1-\d(Z))(\rho^{-}_Z(v))^{2}$. The remaining vertices in $O$ are mapped to $Z_1$, while the vertices in $I$ are mapped to $Z_2$ with probability $(\mu(Z) - \rho^{-}_Z(v))^{k}(\rho^{-}_Z(v))^{l-2}$, giving the desired bound.
\end{proof}


\begin{lem}\label{l:gooddensity}
	For every $Z  \subseteq V(G)$ we have   $$\Pr[(\phi \in \mc{S}) \wedge (\Im(\phi) \subseteq Z) ] \leq \frac{(k-1)^{k-1}\l^\l}{(m-1)^{m-1}}\mu^2(Z)\cdot \frac{\d(Z)}{2} \cdot \max_{z \in Z}\rho^{m-1}_Z(z).$$
\end{lem}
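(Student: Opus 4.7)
\textbf{Proof plan for \cref{l:gooddensity}.} The plan is to condition on the image of the center $c$ and to split the product $(\rho^{+}_Z(z))^{k}(\rho^{-}_Z(z))^{\l}$ asymmetrically so that \cref{l:xayb} can convert all but one factor of $\rho^{+}_Z(z)$ into a single power $\rho_Z(z)^{m-1}$. Taking the maximum of $\rho_Z(z)^{m-1}$ out of the sum, the remaining $\sum_{z \in Z}\rho^{+}_Z(z)$ will then be interpreted as an edge count, producing the desired $\d(Z)$ factor.

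To set it up, I would begin with the elementary estimate obtained by conditioning on $\phi(c)=z\in Z$ and requiring the $k$ out-leaves of $\SK$ to fall in $N^{+}_Z(z)$ and the $\l$ in-leaves in $N^{-}_Z(z)$:
\[
\Pr[\phi \in \mc{S} \wedge \Im(\phi) \subseteq Z] \leq \frac{1}{n}\sum_{z \in Z}(\rho^{+}_Z(z))^{k}(\rho^{-}_Z(z))^{\l}.
\]
Writing $(\rho^{+}_Z(z))^{k}(\rho^{-}_Z(z))^{\l} = \rho^{+}_Z(z)\cdot(\rho^{+}_Z(z))^{k-1}(\rho^{-}_Z(z))^{\l}$ and applying \cref{l:xayb} with $a=k-1$, $b=\l$ to the second factor yields
\[
(\rho^{+}_Z(z))^{k-1}(\rho^{-}_Z(z))^{\l} \leq \frac{(k-1)^{k-1}\l^{\l}}{(m-1)^{m-1}}(\rho^{+}_Z(z)+\rho^{-}_Z(z))^{m-1} = \frac{(k-1)^{k-1}\l^{\l}}{(m-1)^{m-1}}\rho_Z(z)^{m-1},
\]
where the last equality uses the identity $\rho^{+}_Z+\rho^{-}_Z=\rho_Z$, valid under the no-reciprocal-arcs convention implicit throughout the paper. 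Pulling out $\max_{z \in Z}\rho_Z(z)^{m-1}$ gives
\[
\Pr[\phi \in \mc{S} \wedge \Im(\phi) \subseteq Z] \leq \frac{(k-1)^{k-1}\l^{\l}}{(m-1)^{m-1}}\cdot \max_{z\in Z}\rho_Z(z)^{m-1}\cdot \frac{1}{n}\sum_{z \in Z}\rho^{+}_Z(z).
\]

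To conclude, I would identify $\sum_{z \in Z}|N^{+}_Z(z)|$ with the number of arcs of $G$ contained in $Z$, which equals $|E(G[Z])| = \d(Z)\binom{|Z|}{2}$ in the oriented setting, and then bound $\binom{|Z|}{2}\leq |Z|^{2}/2 = n^{2}\mu^{2}(Z)/2$ to obtain $\frac{1}{n}\sum_{z \in Z}\rho^{+}_Z(z) \leq \d(Z)\mu^{2}(Z)/2$. Substituting into the previous display gives the claim. The main substantive step is the asymmetric split of $(\rho^{+})^{k}(\rho^{-})^{\l}$ that sets up \cref{l:xayb}; the main technical care is the edge/arc bookkeeping that produces the precise factor $\d(Z)/2$ (rather than $\d(Z)$), which is exactly where the oriented-graph convention enters the argument.
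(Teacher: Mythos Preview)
Your argument is correct and is essentially the paper's own proof, just organized in the reverse order: the paper first fixes the pair $(\phi(c),\phi(o))$, observes that this picks an arc of $G[Z]$ (giving the factor $|E(G[Z])|/n^{2}\le \mu^{2}(Z)\d(Z)/2$), and then bounds the remaining $m-1$ leaves by $(\rho^{+}_Z(\phi(c)))^{k-1}(\rho^{-}_Z(\phi(c)))^{\l}$ and applies \cref{l:xayb}; your asymmetric split of $(\rho^{+}_Z(z))^{k}(\rho^{-}_Z(z))^{\l}$ amounts to exactly the same arc/leaf decomposition.
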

\begin{proof}
	 We have $$\Pr[(\phi(c), \phi(o) \in Z) \wedge (\phi(c)\phi(o) \in E(G))] = \frac{|E(G[Z])|}{n^2} \leq \mu^2(Z)\frac{\d(Z)}{2}.$$ 
For any such choice of $\phi(c)$ and $\phi(o)$, the probability that the remaining vertices are appropriately mapped to in- and out-neighbors of $c$ is $$ (\rho^+_Z(\phi(c))^{k-1}(\rho^-_Z(\phi(c))^{\l} \leq \frac{(k-1)^{k-1}\l^\l}{(m-1)^{m-1}}\rho^{m-1}_Z(\phi(c)),$$
which implies the claimed bound. 
\end{proof}

\begin{claim}\label{c:sy} For $\l \geq 2$ and any $x \in X$ we have
	\begin{equation}\label{e:s1ylocal}
	s_1(c \to x) \leq  (1-\alpha)^m f\s{1-\frac{\d^{-}_{Y}(x)}{1-\alpha}} - S_1\d(Y).
	\end{equation}	
\end{claim}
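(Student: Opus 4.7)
The plan is to derive the claim almost immediately from \cref{l:baddensity}, using only the pointwise estimate $f(t) \geq t^k(1-t)^\ell$ from \cref{l:f} and the definition of $S_1$.

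First I would note that, by the definition of type $1$, $s_1(c\to x) = \Pr[\phi\in \mc{S}\wedge \phi(I\cup O)\subseteq Y\mid \phi(c)=x]$. Applying \cref{l:baddensity} (which is valid because $\ell\geq 2$) with $Z=Y$ and $v=x$, and using $\mu(Y)=1-\alpha$, gives
$$s_1(c\to x) \leq (1-\d(Y))\bigl(1-\alpha-\d^-_Y(x)\bigr)^k\bigl(\d^-_Y(x)\bigr)^\ell.$$
Expanding the factor $1-\d(Y)$, this rewrites as
$$s_1(c\to x) \leq \bigl(1-\alpha-\d^-_Y(x)\bigr)^k\bigl(\d^-_Y(x)\bigr)^\ell - \d(Y)\bigl(1-\alpha-\d^-_Y(x)\bigr)^k\bigl(\d^-_Y(x)\bigr)^\ell.$$

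Next I would handle the two terms separately. For the subtracted term, the trivial inequality $1-\alpha-\d^-_Y(x) \geq \d^+_Y(x)$ together with the definition of $S_1$ yields
$$\bigl(1-\alpha-\d^-_Y(x)\bigr)^k\bigl(\d^-_Y(x)\bigr)^\ell \;\geq\; \bigl(\d^+_Y(x)\bigr)^k\bigl(\d^-_Y(x)\bigr)^\ell \;\geq\; S_1,$$
so the subtracted term is at least $S_1\d(Y)$. For the leading term, set $t = 1-\d^-_Y(x)/(1-\alpha)\in[0,1]$, so that $(1-\alpha)t = 1-\alpha-\d^-_Y(x)$ and $(1-\alpha)(1-t)=\d^-_Y(x)$. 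By \cref{l:f}, $f(t)\geq t^k(1-t)^\ell$, hence
$$\bigl(1-\alpha-\d^-_Y(x)\bigr)^k\bigl(\d^-_Y(x)\bigr)^\ell = (1-\alpha)^m t^k(1-t)^\ell \leq (1-\alpha)^m f\!\left(1-\frac{\d^-_Y(x)}{1-\alpha}\right).$$
Combining these two estimates gives exactly \eqref{e:s1ylocal}.

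I do not expect a real obstacle here: the content of the claim is essentially that \cref{l:baddensity} already contains the $(1-\d(Y))$ loss factor coming from the required non-adjacency of two in-leaves, and the $S_1\d(Y)$ subtraction in the claim is just a convenient rewriting of this loss using the uniform lower bound $S_1$. The only minor care needed is to invoke the $f(t)\geq t^k(1-t)^\ell$ inequality of \cref{l:f} so that the bound is expressed in terms of the concave majorant $f$ (which will be important in later averaging arguments), rather than in terms of $t^k(1-t)^\ell$ directly.
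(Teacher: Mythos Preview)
Your proposal is correct and follows essentially the same argument as the paper: apply \cref{l:baddensity} with $Z=Y$, split off the $\d(Y)$ factor, bound the leading term via $f(t)\geq t^k(1-t)^\ell$ from \cref{l:f}, and bound the subtracted term below using $1-\alpha-\d^-_Y(x)\geq \d^+_Y(x)$ together with the definition of $S_1$. The paper's proof is just a more compressed write-up of exactly these steps.
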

\begin{proof}
	 Using \cref{l:baddensity} we obtain
\begin{align*} s_1(c \to x) &\leq (1-\d(Y))(1-\alpha - \rho^{-}_Y(x))^{k}(\rho^{-}_Y(x))^{\l}  \\
&\leq (1-\alpha)^m f\s{1-\frac{\d^{-}_{Y}(x)}{1-\alpha}} - (\rho^{+}_Y(x))^{k}(\rho^{-}_Y(x))^{\l}\d(Y),
\end{align*}
which gives the claimed bound by definition of $S_1$.
\end{proof}

Analogously to the proof of \cref{c:bound2l}, averaging \eqref{e:s1ylocal} over $x \in X$ we obtain the following.

\begin{claim}\label{c:s1y} For $\l \geq 2$ we have
	\begin{equation}\label{e:s1y}
	s_1(G) \leq  \alpha(1-\alpha)^m f(d) -\alpha S_1\d(Y).
	\end{equation}	
\end{claim}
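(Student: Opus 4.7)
The plan is to essentially mimic the proof of Claim \ref{c:bound2l}, replacing its use of \eqref{e:six0} by the bound \eqref{e:s1ylocal} obtained in Claim \ref{c:sy}, and applying Jensen's inequality to the concave function $f$ (whose concavity is given by Lemma \ref{l:f}).

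First, I would observe that by definition of $s_1$, only maps with $\phi(c)\in X$ contribute, so
\[
s_1(G) = \bb{E}_{v \in V(G)}[s_1(c \to v)] = \alpha \bb{E}_{x \in X}[s_1(c \to x)].
\]
Next, I would apply the pointwise inequality \eqref{e:s1ylocal} from Claim \ref{c:sy} (valid since $\l \geq 2$) to each $x \in X$, yielding
\[
s_1(G) \leq \alpha(1-\alpha)^m \bb{E}_{x \in X}\!\left[f\!\left(1 - \frac{\d^{-}_{Y}(x)}{1-\alpha}\right)\right] - \alpha S_1 \d(Y).
\]
The second term on the right is already in the desired form, so it remains to control the average of the $f$-terms.

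For this, I would invoke the concavity of $f$ together with the definition
\[
d = 1 - \bb{E}_{x \in X}\!\left[\frac{\d^{-}_{Y}(x)}{1-\alpha}\right].
\]
By Jensen's inequality,
\[
\bb{E}_{x \in X}\!\left[f\!\left(1 - \frac{\d^{-}_{Y}(x)}{1-\alpha}\right)\right] \leq f\!\left(\bb{E}_{x \in X}\!\left[1 - \frac{\d^{-}_{Y}(x)}{1-\alpha}\right]\right) = f(d).
\]
Substituting this back gives precisely \eqref{e:s1y}.

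There is essentially no obstacle here: the local bound \eqref{e:s1ylocal} does the real work, and the only thing one needs to check is that Jensen's inequality applies, which is ensured by Lemma \ref{l:f} (concavity of $f$ on $[0,1]$), and that the argument $1 - \d^{-}_{Y}(x)/(1-\alpha)$ lies in $[0,1]$, which holds since $\d^{-}_{Y}(x) \leq \mu(Y) = 1-\alpha$. The term $-\alpha S_1 \d(Y)$ is constant in $x$ and hence passes through the averaging unchanged.
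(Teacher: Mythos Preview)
Your proposal is correct and follows essentially the same approach as the paper: the paper simply states that the claim is obtained ``analogously to the proof of \cref{c:bound2l}, averaging \eqref{e:s1ylocal} over $x \in X$,'' which is precisely the computation you have written out in detail, including the use of concavity of $f$ (Jensen) and the definition of $d$.
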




Combining the claims proved in this section, we now bound $s_1(G) + s_Y(G) + s_0(G)$.

\begin{claim}\label{c:total0} If $\l \geq 2$ then
		\begin{equation}\label{e:total0} 
s_1(G) + s_Y(G) + s_0(G) \leq  \alpha(1-\alpha)^m f(d).
\end{equation}
\end{claim}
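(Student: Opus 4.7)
My plan is to combine the two cancellation bounds on $s_1(G)$—from \cref{c:s1y} (proportional to $\d(Y)$) and \cref{c:bound2l} (proportional to $d_0$)—with upper bounds on $s_Y(G)$ and $s_0(G)$, so that the losses from $s_Y$ and $s_0$ are absorbed by the cancellations in $s_1$. The core difficulty is that neither single bound on $s_1$ absorbs both losses simultaneously, so I will convex-combine them and verify the resulting feasibility condition via \cref{c:S2}.

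First, I apply \cref{l:gooddensity} with $Z = Y$, using $\mu(Y) = 1-\x$ and $\max_{z \in Y}\d_Y(z) \leq \y$, to obtain
$$s_Y(G) \leq \tau(1-\x)^2 \y^{m-1} \d(Y)/2, \qquad \text{where } \tau := \frac{(k-1)^{k-1}\l^\l}{(m-1)^{m-1}}.$$
\cref{c:nonedge2} gives $s_0(G) \leq \frac{m(m-1)(\x+\y)(1-D)^{m-2}}{(m+1)\binom{m}{k}}\, d_0$. For any $\theta \in [0,1]$, convex-combining the bounds from \cref{c:s1y} and \cref{c:bound2l} with weights $1-\theta$ and $\theta$ yields
$$s_1(G) \leq \x(1-\x)^m f(d) - (1-\theta)\x S_1 \d(Y) - \theta\, \frac{m S_1 d_0}{1-\x}.$$

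Summing the three estimates, the claim will follow if I can choose $\theta \in [0,1]$ so that both the coefficient of $\d(Y)$ and the coefficient of $d_0$ in the aggregate upper bound on $s_1 + s_Y + s_0 - \x(1-\x)^m f(d)$ are non-positive, i.e.,
$$1-\theta \geq \frac{\tau(1-\x)^2\y^{m-1}}{2\x S_1} \quad\text{and}\quad \theta \geq \frac{(m-1)(1-\x)(\x+\y)(1-D)^{m-2}}{(m+1)\binom{m}{k}\, S_1}.$$
Such $\theta$ exists precisely when these two lower bounds sum to at most $1$. By the definition \eqref{e:s2} of $S_2$, that sum condition is equivalent to $S_2 \geq \tau(1-\x)^2\y^{m-1}/(2\x)$.

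Finally, \cref{c:S2} yields $S_2 \geq \lambda_1(1-\x)\y^{m-1}/\x$, so it suffices to verify $\lambda_1 \geq \tau(1-\x)/2$. A direct calculation gives
$$\lambda_1/\tau = \s{\frac{k}{k-1}}^{k-1} \cdot k\s{1-\frac{1}{\l}}^{\l-1} \geq 2 \cdot 1 = 2$$
for $k \geq \l \geq 2$, using $(k/(k-1))^{k-1} \geq 2$ (increasing to $e$) and $\l(1-1/\l)^{\l-1} \geq 1$ (minimized at $\l = 2$) together with $k \geq \l$. Hence $\lambda_1 \geq 2\tau \geq \tau(1-\x)/2$, completing the plan. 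The main obstacle is the interplay between $\d(Y)$ and $d_0$ as distinct sources of loss; the convex combination resolves it, but only because \cref{c:S2} gives a lower bound on $S_2$ strong enough to fund both cancellations—an ingredient that genuinely uses the hypothesis $\l \geq 2$, which enters via the coefficient $\tau$ in the bound on $s_Y$.
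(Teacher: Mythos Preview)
Your proof is correct and follows essentially the same approach as the paper: both take a convex combination of the two bounds on $s_1(G)$ (from \cref{c:bound2l} and \cref{c:s1y}) and use \cref{c:S2} to show the combination absorbs both $s_0(G)$ and $s_Y(G)$. The paper fixes the weight $\eps$ to match the $d_0$-absorption exactly and then verifies the $\d(Y)$-absorption, whereas you parametrize by $\theta$ and argue feasibility; your verification of $\lambda_1 \geq \tau(1-\x)/2$ is in fact more explicit than the paper's, which leaves that step implicit.
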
	

\begin{proof}
	By \cref{l:gooddensity} we have 
\begin{equation}\label{e:sy}
		s_Y(G) \leq \frac{(k-1)^{k-1}{\l}^{\l}}{(m-1)^{m-1}} (1-\alpha)^2\frac{\d(Y)}{2} \y^{m-1}. 
\end{equation}
Let $$\eps = \frac{1}{S_1}\cdot\frac{(m-1)}{(m+1)\binom{m}{k}}(\alpha+\beta)(1-\alpha)(1-D)^{m-2}.$$
By \cref{c:S2} we have
$$(1 -\eps)S_1 = S_2 \geq \lambda_1\frac{1-\x}{\x}\beta^{m-1} \geq \frac{(k-1)^{k-1}{\l}^{\l}}{(m-1)^{m-1}} \frac{(1-\alpha)^2}{\x}\frac{ \y^{m-1}}{2}.$$
In particular $\eps \in [0,1]$.
Thus taking a convex combination of \eqref{e:s1i} and (\ref{e:s1y})we get
	\begin{align*}
	s_1(G) &\leq  \alpha(1-\alpha)^m f(d) - \eps\frac{m}{1 -\alpha} S_1 d_0  -  (1-\eps)\alpha S_1 \d(Y)\\ &\leq
	\alpha(1-\alpha)^m f(d) - \frac{m(m-1)}{(m+1)\binom{m}{k}}(\alpha+\beta)(1-D)^{m-2} d_0 - \frac{(k-1)^{k-1}{\l}^{\l}}{2(m-1)^{m-1}}(1-\alpha)^2\y^{m-1}\d(Y)  \\ &\leq
	\alpha(1-\alpha)^m f(d) - s_0(G) - s_Y(G),
	\end{align*}	
 where the last inequality follows from \cref{c:nonedge} and \eqref{e:sy}.
\end{proof}

\subsection{The case $\l\geq 2$}
In this subsection, we complete the proof of \cref{l:upper} for the case $\l\geq 2$.
In view of \cref{c:total0}, it remains to bound $s_2(G)$ and $s_X(G)$ 
(see \cref{c:s2} below, which in fact works for $\l\geq 1$).

First, to dispatch the easier case $k = \l$, we use the following easy and rather weak consequence of \cref{c:alphaU}.

\begin{claim}\label{c:alpha} If $k = \l$, then
	\begin{equation}\label{e:condition2}
	\frac{(k-1)^{k-1}k^k 2^{2k-1}}{(2k-1)^{2k-1}} \leq \frac{1-\x}{\x}.
	\end{equation}		
\end{claim}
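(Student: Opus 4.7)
The plan is to deduce the claim directly from \cref{c:alphaU} together with an elementary calculus estimate, exploiting the fact that $k=\ell$ and $k+\ell\geq 6$ force $k\geq 3$.

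First I would invoke \cref{c:alphaU}: since $m=k+\ell=2k$, we get $\alpha\leq 1/(2k)$, hence
$$\frac{1-\alpha}{\alpha}\;\geq\;2k-1.$$
Thus it suffices to prove the purely numeric inequality
$$\frac{(k-1)^{k-1}k^{k}\,2^{2k-1}}{(2k-1)^{2k-1}}\;\leq\;2k-1,\qquad\text{equivalently}\qquad (k-1)^{k-1}k^{k}\,2^{2k-1}\;\leq\;(2k-1)^{2k},$$
for all integers $k\geq 3$.

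For this, I would factor out the dominant terms, writing $(2k-1)^{2k}=(2k)^{2k}(1-1/(2k))^{2k}$ and $(k-1)^{k-1}=k^{k-1}(1-1/k)^{k-1}$. Since $(2k)^{2k}=2^{2k}k^{2k}$, a short simplification gives
$$\frac{(2k-1)^{2k}}{(k-1)^{k-1}k^{k}\,2^{2k-1}}\;=\;2k\cdot\frac{(1-1/(2k))^{2k}}{(1-1/k)^{k-1}}.$$
The key elementary step is the observation
$$\left(1-\tfrac{1}{2k}\right)^{\!2}\;=\;1-\tfrac{1}{k}+\tfrac{1}{4k^{2}}\;\geq\;1-\tfrac{1}{k},$$
which upon raising to the $k$-th power yields $(1-1/(2k))^{2k}\geq(1-1/k)^{k}$. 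Plugging this in,
$$\frac{(2k-1)^{2k}}{(k-1)^{k-1}k^{k}\,2^{2k-1}}\;\geq\;2k\cdot\frac{(1-1/k)^{k}}{(1-1/k)^{k-1}}\;=\;2k\left(1-\tfrac{1}{k}\right)\;=\;2(k-1)\;\geq\;4,$$
where the last inequality uses $k\geq 3$. This is stronger than the required factor of $1$, completing the proof.

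There is no real obstacle here: the only mild subtlety is spotting that $(1-1/(2k))^{2}\geq 1-1/k$ gives exactly the right coupling between the two exponential factors. The estimate is in fact very loose (a factor of $4$ to spare), which is consistent with the remark in the excerpt that \cref{c:alpha} is an "easy and rather weak" consequence of \cref{c:alphaU}.
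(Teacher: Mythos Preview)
Your proof is correct and follows essentially the same approach as the paper. Both arguments invoke \cref{c:alphaU} to get $\tfrac{1-\alpha}{\alpha}\ge 2k-1$ and then bound the left-hand side by something strictly less than $2k-1$; the paper rewrites the left-hand side as $\tfrac{2k}{2k-1}\bigl(\tfrac{4k(k-1)}{(2k-1)^2}\bigr)^{k-1}$ and uses $4k(k-1)\le(2k-1)^2$, which is algebraically the same observation as your $(1-\tfrac{1}{2k})^2\ge 1-\tfrac{1}{k}$.
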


\begin{proof}
	We have
	\begin{align*}
	\frac{(k-1)^{k-1}k^k 2^{2k-1}}{(2k-1)^{2k-1}} = \frac{2k}{2k-1}\fs{4k(k-1)}{(2k-1)^2}^{k-1} \leq \frac{2k}{2k-1} < \frac{1 -\frac{1}{2k}}{\frac{1}{2k}} \leq \frac{1-\x}{\x},
	\end{align*}
	as desired, where the last inequality follows from \cref{c:alphaU}.
\end{proof}

We are now ready to finish the proof of \cref{l:upper} in the case $k = \l$.

\begin{proof}[Proof of \cref{l:upper} in the case $k = \l$]
	By \cref{l:baddensity} we have 
$$
		s_2 (G) \leq (1-\d(X))(1-\x)\frac{\x^m}{2^{m}},
$$
and by \cref{l:gooddensity} we have
$$
	s_X(G)  \leq  \frac{(k-1)^{k-1}k^k}{(2k-1)^{2k-1}}\alpha^{m+1}\frac{\d(X)}{2}.
$$ 
Combining these inequalities and \eqref{e:condition2}, we get
	\begin{align*} s_2(G) + s_X(G) \leq \frac{1}{2^m}(1-\x)\x^m + \d(X)\alpha^{m+1}\left( \frac{(k-1)^{k-1}k^k}{2(2k-1)^{2k-1}} - \frac{1-\x}{\x2^m}\right) \leq \frac{1}{2^m}(1-\x)\x^m. 
	\end{align*}
Meanwhile, as $f(d) \leq \frac{1}{2^m}$	for any $d \in [0,1]$, \cref{c:total0} implies
$$ s_1(G) + s_Y(G) + s_0(G) \leq  \frac{1}{2^m}\alpha(1-\alpha)^m.$$
Summing these two inequalities we obtain $$s(G) \leq \frac{1}{2^m} \s{\alpha(1-\alpha)^m + (1-\x)\x^m} \leq \brm{OPT}(k,\l),$$
as desired.
\end{proof}	

In the case $k>\l$, we need the following, more subtle estimates. 

\begin{claim}\label{c:bound0} For every $y \in Y$, $x \in N^{+}_X(y)$
	\begin{equation}\label{e:s11}
	\frac{(k-1)^{k-1}\l^\l}{(m-1)^{m-1}} \x^{m} \geq \x s_2(c \to y, o \to x) +  (m-1) s_X(o \to x). 
	\end{equation} 
\end{claim}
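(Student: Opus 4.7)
The plan is to exploit the fact that all leaves of $S_{k,\l}$ are pairwise non-adjacent. Consequently, for every $\phi \in \mc{S}$ with $\phi(o) = x$, each of the $m-1$ non-$o$ leaves of $S_{k,\l}$ must be sent to a non-neighbor of $x$ in $G$. Thus, whenever such images happen to lie in $X$, they actually lie in the (potentially much smaller) set $\bar X := X \setminus (\{x\} \cup N_X(x))$, whose relative size is at most $\bar\alpha := \alpha - \rho^-_X(x)$. Keeping track of this restriction, rather than bounding the relevant sets by $|X|$, is essential: without it, \cref{l:xayb} would overshoot the right-hand side by a factor of order $m$, which is exactly the slack that makes the claim fail to follow from naive estimates.

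Given this observation, I would first bound
\[
s_2(c \to y, o \to x) \;\leq\; \frac{|N^+_X(y) \cap \bar X|^{k-1}\, |N^-_X(y) \cap \bar X|^{\l}}{n^{m-1}} \;\leq\; \frac{(k-1)^{k-1}\l^\l}{(m-1)^{m-1}}\,\bar\alpha^{\,m-1},
\]
by observing that in any $\phi \in \mc{S}_2$ with $\phi(c) = y$ and $\phi(o) = x$ the $k-1$ other out-leaves must be mapped into $N^+_X(y) \cap \bar X$ and the $\l$ in-leaves into $N^-_X(y) \cap \bar X$, and then applying \cref{l:xayb} (using that these two sets are disjoint and contained in $\bar X$). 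An essentially identical argument, this time summing over the possible centers $\phi(c) = v \in N^-_X(x)$, gives
\[
s_X(o \to x) \;\leq\; \frac{1}{n^m} \sum_{v \in N^-_X(x)} |N^+_X(v) \cap \bar X|^{k-1}\,|N^-_X(v) \cap \bar X|^{\l} \;\leq\; \frac{(k-1)^{k-1}\l^\l}{(m-1)^{m-1}}\,\rho^-_X(x)\,\bar\alpha^{\,m-1}.
\]

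Combining these two estimates reduces the claim to the purely analytic inequality
\[
\bigl[\alpha + (m-1)\rho^-_X(x)\bigr]\bigl(\alpha - \rho^-_X(x)\bigr)^{m-1} \;\leq\; \alpha^{m}.
\]
To finish, I set $p := \rho^-_X(x) \in [0,\alpha]$ and consider $f(p) := (\alpha + (m-1)p)(\alpha - p)^{m-1}$. A direct differentiation gives $f'(p) = -m(m-1)\,p\,(\alpha - p)^{m-2} \leq 0$ on $[0,\alpha]$, so $f$ is non-increasing and $f(p) \leq f(0) = \alpha^m$, completing the proof.

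The main conceptual obstacle is noticing that the non-$o$ leaves must actually land in $\bar X$; once that restriction is in place, the remainder is two applications of \cref{l:xayb} followed by a single-variable calculus check.
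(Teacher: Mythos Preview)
Your proof is correct and follows essentially the same route as the paper: bound each of $s_2(c \to y, o \to x)$ and $s_X(o \to x)$ by observing that all non-$o$ leaves must land in $X \setminus N_X(x)$, apply \cref{l:xayb}, and reduce to the one-variable inequality $(\alpha + (m-1)p)(\alpha-p)^{m-1} \leq \alpha^m$. The only cosmetic differences are that the paper uses $\rho_X(x)$ in place of your $\rho^-_X(x)$ (both choices work), and it verifies the final inequality via $1+(m-1)t \leq (1+t)^{m-1}$ rather than by differentiating.
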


\begin{proof}
		We have 
		$$ s_2(c \to y, o \to x)  \leq (\d^{+}_{X - N_X(x)}(y))^{k-1}(\d^{-}_{X - N_X(x)}(y))^{\l} \leq \frac{(k-1)^{k-1}\l^\l}{(m-1)^{m-1}}(\x - \d_X(x))^{m-1}.$$ 
		Meanwhile, 
		$
		s_X(o \to x) \leq \frac{(k-1)^{k-1}\l^\l}{(m-1)^{m-1}} 
		\rho_X(x)(\x - 	\rho_X(x))^{m-1}
		. 
		$
		Substituting these upper bounds on $ s_2(c \to y, o \to x)$ and $s_X(o \to x)$, and dividing by $\frac{(k-1)^{k-1}\l^\l}{(m-1)^{m-1}}$, we reduce \eqref{e:s11} to
		$$
		\x^{m}  \geq \x(\x - \d_X(x))^{m-1} + (m-1)\d_X(x)(\x - \d_X(x))^{m-1}.
		$$
		This last inequality holds as \begin{align*} \x(\x - \d_X(x))^{m-1} &+ (m-1)\d_X(x)(\x - \d_X(x))^{m-1} =  \x^m\s{1+(m-1)\frac{\d_X(x)}{\x}}\s{1 -\frac{\d_X(x)}{\x}}^{m-1}\\ &\leq \x^m\s{1+\frac{\d_X(x)}{\x}}^{m-1}\s{1 -\frac{\d_X(x)}{\x}}^{m-1} \leq \x^m. \end{align*}
		So this claim is completed.
\end{proof}

\begin{claim}\label{c:bound1} For every $y \in Y$ we have
	\begin{equation}\label{e:s1}
	\frac{(k-1)^{k-1}\l^\l}{(m-1)^{m-1}}\x^{m-1} \d^{+}_X(y) \geq s_2(c \to y) +  \frac{ \rho^{+}_X(y)}{\gamma}\bb{E}_{x \in N^{+}_X(y)}[s_X(o \to x)]. 
	\end{equation}
\end{claim}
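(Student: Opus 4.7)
\textbf{Proof plan for \cref{c:bound1}.}  The approach is to convert \cref{c:bound0}, which is a pointwise bound on $\alpha\, s_2(c\to y,o\to x)+(m-1)\, s_X(o\to x)$, into a bound on $s_2(c\to y)$ by averaging over $x \in N^+_X(y)$.  The only nontrivial adjustment after that is trading the coefficient $(m-1)/\alpha$ for the (smaller) coefficient $1/\gamma$, which will be legal thanks to \cref{c:gamma}.

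First, since any $\phi \in \mc{S}_2$ with $\phi(c) = y$ must send the distinguished out-leaf $o$ to a vertex of $X$ that is an out-neighbor of $y$, the law of total probability, conditioning on $\phi(o)$, gives
$$s_2(c\to y) \;=\; \d^+_X(y)\,\bb{E}_{x \in N^+_X(y)}\bigl[\,s_2(c\to y,\,o\to x)\,\bigr].$$
Assuming $\alpha>0$ (otherwise $\d^+_X(y)=0$ and both sides of \eqref{e:s1} vanish), divide the inequality \eqref{e:s11} from \cref{c:bound0} by $\alpha$ to get, for each $x \in N^+_X(y)$,
$$s_2(c\to y,\,o\to x) \;\le\; \frac{(k-1)^{k-1}\l^\l}{(m-1)^{m-1}}\,\alpha^{m-1} \;-\; \frac{m-1}{\alpha}\,s_X(o\to x).$$
Averaging over $x \in N^+_X(y)$ and multiplying through by $\d^+_X(y)$ then yields
$$s_2(c\to y) \;+\; \frac{(m-1)\,\d^+_X(y)}{\alpha}\,\bb{E}_{x \in N^+_X(y)}\bigl[s_X(o\to x)\bigr] \;\le\; \frac{(k-1)^{k-1}\l^\l}{(m-1)^{m-1}}\,\alpha^{m-1}\,\d^+_X(y).$$

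To finish, observe that $m = k+\l \ge 6$ forces $m-1 \ge 2$, so \cref{c:gamma} gives $\gamma \ge \alpha/2 \ge \alpha/(m-1)$, equivalently $(m-1)/\alpha \ge 1/\gamma$.  Since $\bb{E}_{x\in N^+_X(y)}[s_X(o\to x)] \ge 0$, replacing $(m-1)/\alpha$ on the left by the smaller coefficient $1/\gamma$ preserves the inequality, which is precisely \eqref{e:s1}.  I don't foresee any real obstacle: the whole argument is a rearrangement of \cref{c:bound0} plus the slack $\gamma \ge \alpha/2$ from \cref{c:gamma}, and the only place where the hypothesis $m \ge 6$ enters is in ensuring $m-1 \ge 2$ so that this slack is sufficient to absorb the factor $m-1$.
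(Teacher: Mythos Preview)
Your proof is correct and follows essentially the same approach as the paper: average the inequality from \cref{c:bound0} over $x \in N^+_X(y)$, rescale by $\rho^+_X(y)/\alpha$ to recover $s_2(c\to y)$, and then use \cref{c:gamma} to replace $(m-1)/\alpha$ by $1/\gamma$. Your write-up is slightly more explicit than the paper's in spelling out why \cref{c:gamma} suffices (via $m-1 \ge 2$), but the argument is the same.
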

\begin{proof}
	Averaging \eqref{e:s11} over such $x \in  N^{+}_X(y)$ and multiplying by $\rho^{+}_X(y)/\alpha$, we obtain
	$$ \frac{(k-1)^{k-1}\l^\l}{(m-1)^{m-1}}\x^{m-1} \d^{+}_X(y) \geq s_2(c \to y) +  \frac{(m-1) \rho^{+}_X(y)}{\alpha}\bb{E}_{x \in N^{+}_X(y)}[s_X(o \to x)],$$ 
which implies  \eqref{e:s1} by \cref{c:gamma}.
\end{proof}

\begin{claim}\label{c:s2}  
	\begin{equation}\label{e:s2sx}
	\frac{(k-1)^{k-1}\l^\l}{(m-1)^{m-1}}(1-d) \x^m (1-\x) \geq s_2(G) +  s_X(G).
	\end{equation}
\end{claim}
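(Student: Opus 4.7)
The target bound \eqref{e:s2sx} has exactly the same shape as the pointwise bound \eqref{e:s1} from \cref{c:bound1}, so the plan is to integrate \eqref{e:s1} over $y\in Y$, multiply by $(1-\alpha)$, and match each side term-by-term with the ingredients of $s_2(G)$ and $s_X(G)$.

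For the left-hand side I would use that by the very definition of $d$ we have $\bb{E}_{y\in Y}[\rho^+_X(y)] = \alpha(1-d)$. Thus averaging the LHS of \eqref{e:s1} over $y \in Y$ and multiplying by $1-\alpha$ produces
$$\frac{(k-1)^{k-1}\l^\l}{(m-1)^{m-1}}\,\alpha^{m-1}\cdot\alpha(1-d)\cdot(1-\alpha) \;=\; \frac{(k-1)^{k-1}\l^\l}{(m-1)^{m-1}}\,(1-d)\,\alpha^m(1-\alpha),$$
which is precisely the LHS of \eqref{e:s2sx}.

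On the right-hand side, the $s_2(c \to y)$ term is immediate: by \eqref{eq:sG} we have $(1-\alpha)\bb{E}_{y\in Y}[s_2(c\to y)] = s_2(G)$, since $s_2$-maps have their center in $Y$. The only real content is the second term, where I need to lower-bound
$$\frac{1-\alpha}{\gamma}\,\bb{E}_{y\in Y}\left[\rho^+_X(y)\,\bb{E}_{x\in N^+_X(y)}[s_X(o\to x)]\right]$$
by $s_X(G)$. I would swap the order of summation in the double average: the product $\rho^+_X(y)\cdot\bb{E}_{x\in N^+_X(y)}[s_X(o\to x)]$ equals $\frac{1}{n}\sum_{x\in N^+_X(y)} s_X(o\to x)$, and then summing over $y\in Y$ and interchanging gives
$$(1-\alpha)\,\bb{E}_{y\in Y}\left[\rho^+_X(y)\,\bb{E}_{x\in N^+_X(y)}[s_X(o\to x)]\right] \;=\; \frac{1}{n}\sum_{x\in X} s_X(o\to x)\,\rho^-_Y(x).$$

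Finally, invoking the definition of $\gamma$ via $\rho^-_Y(x)\ge\gamma$ for every $x\in X$ turns this into $\ge \gamma\cdot \alpha\,\bb{E}_{x\in X}[s_X(o\to x)] = \gamma\,s_X(G)$, which cancels the $\gamma$ in the denominator and leaves exactly $s_X(G)$. Combining these computations yields \eqref{e:s2sx}. There is no real obstacle here: the proof is essentially a bookkeeping argument, and the only place where anything is actually used (beyond \cref{c:bound1}) is the elementary lower bound $\rho^-_Y(x)\ge\gamma$ from the definition of $\gamma$. The point worth being careful about is the swap of summation order, since one has to make sure the averaging weights on both sides (the factor $1/|Y|$ coming from $\bb{E}_{y\in Y}$ and the factor $1-\alpha = |Y|/n$) combine correctly to produce the uniform average over $X$.
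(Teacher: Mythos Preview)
Your proposal is correct and follows essentially the same approach as the paper: average \eqref{e:s1} over $y\in Y$, use $\bb{E}_{y\in Y}[\rho^+_X(y)]=\alpha(1-d)$ and $(1-\alpha)\bb{E}_{y\in Y}[s_2(c\to y)]=s_2(G)$, swap the order of summation in the double average to rewrite it as $\alpha\,\bb{E}_{x\in X}[(\rho^-_Y(x)/\gamma)\,s_X(o\to x)]$, and then invoke $\rho^-_Y(x)\ge\gamma$ together with $s_X(G)=\alpha\,\bb{E}_{x\in X}[s_X(o\to x)]$. Your handling of the averaging weights is exactly right.
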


\begin{proof}
	Averaging \eqref{e:s1} over $y \in Y$ we obtain
\begin{align*}
	\frac{(k-1)^{k-1}\l^\l}{(m-1)^{m-1}} \x^{m-1} \bb{E}_{y \in Y}(\d^{+}_X(y))  &\geq \bb{E}_{y \in Y}(s_2(c \to y)) +  \bb{E}_{y \in Y} \left(\frac{\rho^{+}_X(y)}{\gamma}\bb{E}_{x \in N^{+}_X(y)}[s_X(o \to x)]\right). 
\end{align*}
	Note that $$(1 - \alpha)\bb{E}_{y \in Y} \left(\frac{\rho^{+}_X(y)}{\gamma}\bb{E}_{x \in N^{+}_X(y)}[s_X(o \to x)]\right)  = \alpha\bb{E}_{x \in X} \s{\frac{\rho^{-}_Y(x)}{\gamma}s_X(o \to x)}.$$ And since $\bb{E}_{y \in Y}(\d^{+}_X(y)) = \alpha(1-d)$ and $\rho^{-}_Y(x) \geq \gamma$ for every $x \in X$, the above inequalities imply that
		$$
		\frac{(k-1)^{k-1}\l^\l}{(m-1)^{m-1}} \x^{m} (1-d) \geq \frac{s_2(G)}{1-\alpha} +  \frac{\alpha}{1 - \alpha} \bb{E}_{x \in X}\left(s_X(o \to x)\right). 
		$$
	Finally, note that $s_X(G) = \alpha \bb{E}_{x \in X}[s_X(o \to x)]$, so \eqref{e:s2sx}	follows by multiplying the above by $(1 -\alpha)$.
\end{proof}

We can now finish the proof of \cref{l:upper} in the case $\l \geq 2$.

\begin{proof}[Proof of \cref{l:upper} in the case $\l \geq 2$.]
The case $k=\l$ was proved earlier in this subsection. So it suffices to consider $k>\l\geq 2$.
By Claims~\ref{c:total0} and~\ref{c:s2} 	
we have $$s(G) \leq \alpha(1-\alpha)^m f(d) + \frac{(k-1)^{k-1}\l^\l}{(m-1)^{m-1}}(1-d) \x^m (1-\x) .$$
	By \cref{c:alphaU} we have $\alpha \leq 1/2$, and so by Lemma~\ref{l:f2} the right side is maximized as a function of $d \in [0,1]$ for some  $d \in \left[\frac{k-1}{k+\l-1},\frac{k}{k+\l}\right]$. Thus $s(G)$ is upper bounded by $\brm{OPT}(k,\l)$.
\end{proof}

\subsection{The case $\l=1$}

It remains to resolve the case $\l=1$ and so we assume $\l=1$, and thus $m=k+1$ for the duration of this subsection.
In this case crucially \cref{l:baddensity} is not applicable, and we need to work harder.

For $y \in Y$, let \begin{equation}\label{e:h}h(y) = 	
\frac{(k-1)^{k-1}}{k^k} \x^{k} \d^{+}_X(y) -    s_2(c \to y) -  \frac{\rho^{+}_X(y)}{\gamma}\bb{E}_{x \in N^{+}_X(y)}[s_X(o \to x)]. 
\end{equation}
Thus $h(y)$ is the slack in the inequality \eqref{e:s1}, so we have $h(y) \geq 0$. 
Moreover, averaging  \eqref{e:h} as in the proof of \cref{c:s2}  we can obtain the following refinement of \eqref{e:s2sx}.

\begin{claim}\label{c:s2revised} 
	\begin{equation}\label{e:s2revised}
	\frac{(k-1)^{k-1}}{k^k} (1-d) \x^m (1-\x) \geq s_2(G) +  s_X(G) + (1 -\alpha)\bb{E}_{y \in Y}[h(y)].
	\end{equation}
\end{claim}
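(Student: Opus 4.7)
The plan is to repeat the averaging argument carried out inside the proof of \cref{c:s2}, this time keeping the (nonnegative) slack $h(y)$ explicitly rather than discarding it. Specializing inequality \eqref{e:s1} from \cref{c:bound1} to $\l=1$ (so that $m-1=k$ and $\frac{(k-1)^{k-1}\l^\l}{(m-1)^{m-1}} = \frac{(k-1)^{k-1}}{k^k}$), the definition \eqref{e:h} can be rewritten as the identity
\[
\frac{(k-1)^{k-1}}{k^k}\x^{k}\d^{+}_X(y) = s_2(c \to y) + \frac{\rho^{+}_X(y)}{\gamma}\bb{E}_{x \in N^{+}_X(y)}[s_X(o \to x)] + h(y),
\]
and inequality \eqref{e:s1} is exactly the assertion that $h(y)\ge 0$.

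Averaging this identity over $y \in Y$ and using $\bb{E}_{y\in Y}[\d^{+}_X(y)] = \alpha(1-d)$ yields
\[
\frac{(k-1)^{k-1}}{k^k}\x^{k}\alpha(1-d) = \bb{E}_{y\in Y}[s_2(c\to y)] + \bb{E}_{y\in Y}\!\left[\frac{\rho^{+}_X(y)}{\gamma}\bb{E}_{x\in N^{+}_X(y)}[s_X(o\to x)]\right] + \bb{E}_{y\in Y}[h(y)].
\]
Multiplying by $1-\alpha$, I would then translate each of the three averages on the right into global quantities exactly as in the proof of \cref{c:s2}. The first contributes $(1-\alpha)\bb{E}_{y\in Y}[s_2(c\to y)] = s_2(G)$, since every type-$2$ map sends $c$ into $Y$. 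The second, by the same double-counting identity
\[
(1-\alpha)\bb{E}_{y\in Y}\!\left[\frac{\rho^{+}_X(y)}{\gamma}\bb{E}_{x\in N^{+}_X(y)}[s_X(o\to x)]\right] = \alpha\bb{E}_{x\in X}\!\left[\frac{\rho^{-}_Y(x)}{\gamma}s_X(o\to x)\right]
\]
used in the proof of \cref{c:s2}, is at least $\alpha\bb{E}_{x\in X}[s_X(o\to x)] = s_X(G)$ once we apply \cref{c:gamma} in the form $\rho^{-}_Y(x)\ge\gamma$ for every $x\in X$. The third contributes the remaining term $(1-\alpha)\bb{E}_{y\in Y}[h(y)]$. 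Combining these three bounds gives \eqref{e:s2revised}.

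The only conceptual input beyond the proof of \cref{c:s2} is the tautological observation that $h(y)$ is by construction the slack in \eqref{e:s1}; everything else is direct transcription of the averaging already done there. Consequently there is no real obstacle, and in particular no fresh probabilistic estimate is needed. The only point requiring mild attention is to keep the $\gamma$-factor on the correct side when invoking \cref{c:gamma}, so that the inequality $\rho^{-}_Y(x)\ge\gamma$ is used in the direction that converts the weighted average back into $s_X(G)$.
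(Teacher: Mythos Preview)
Your proposal is correct and follows exactly the approach the paper indicates: average the defining identity \eqref{e:h} over $y\in Y$, multiply by $1-\alpha$, and repeat the double-counting from the proof of \cref{c:s2}, now retaining the slack $h(y)$ instead of discarding it. One trivial mislabel: the inequality $\rho^{-}_Y(x)\ge\gamma$ is simply the definition of $\gamma$, not \cref{c:gamma} (which asserts $\gamma\ge\alpha/2$).
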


Next we need a separate estimate of $s_X(G)$.

\begin{claim}\label{c:sx} 
	\begin{equation}\label{e:l1sx}
	s_X(G) \leq \frac{(k-1)^{k-1}}{(k+1)^{k+1}}\x^{k+2}. 
	\end{equation}
\end{claim}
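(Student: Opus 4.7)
My plan is to bound $s_X(G)$ through a decomposition analogous to the proof of \cref{l:gooddensity} applied to $Z=X$, followed by two applications of \cref{l:xayb}. First, by conditioning on the images $\phi(c)=v\in X$ of the center and $\phi(o)=w\in X$ of one distinguished out-leaf, which forces $vw\in E(G)$, and bounding the placement of the remaining $k-1$ out-leaves and single in-leaf by naive neighbor counts, one obtains
\[s_X(G)\;\le\;\frac{1}{n^2}\sum_{\substack{v,w\in X,\\ vw\in E(G)}}(\d^+_X(v))^{k-1}\d^-_X(v)\;=\;\frac{1}{n}\sum_{v\in X}(\d^+_X(v))^k\d^-_X(v).\]

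Next, I would apply \cref{l:xayb} with parameters $(k-1,1)$ to the inner factor, obtaining $(\d^+_X(v))^{k-1}\d^-_X(v)\le\tfrac{(k-1)^{k-1}}{k^k}\d_X(v)^k$. This is the natural shape after one out-arc has already been absorbed into the decomposition, so it captures the residual contribution of the remaining leaves. Combined with $\d_X(v)\le\alpha$ and the trivial arc-count bound $\sum_{v\in X}\d^+_X(v)\le\alpha^2 n$, this gives the intermediate estimate $s_X(G)\le\tfrac{(k-1)^{k-1}}{k^k}\alpha^{k+2}$, which falls short of the stated bound by exactly the factor $\lambda_0=\tfrac{k^k}{(k+1)^{k+1}}$.

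To recover the missing $\lambda_0$, I would refine the arc-count step by a second invocation of \cref{l:xayb}, this time capturing the directional balance at the center between out- and in-arcs within $X$: since $\d^+_X(v)+\d^-_X(v)\le\d_X(v)\le\alpha$, the joint pointwise maximum $(\d^+_X(v))^k\d^-_X(v)\le\lambda_0\alpha^{k+1}$ is achieved at $(\d^+_X(v),\d^-_X(v))=(\tfrac{k}{k+1}\alpha,\tfrac{1}{k+1}\alpha)$. Splitting the summation appropriately between the two bounds, and invoking \cref{c:alphaU} ($\alpha\le 1/(k+1)$) to handle borderline contributions arising from possible $2$-cycles, should produce the sharpened constant $\tfrac{(k-1)^{k-1}}{(k+1)^{k+1}}$.

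The hard part will be composing the two estimates multiplicatively rather than merely picking the better of the two. Each \cref{l:xayb} application yields only one of the factors $(k-1)^{k-1}/k^k$ or $\lambda_0$ in isolation, while their product $\tfrac{(k-1)^{k-1}}{(k+1)^{k+1}}$ is strictly smaller than either. Achieving both gains from the same summand requires careful bookkeeping that respects the joint constraints $\d^+_X(v),\d^-_X(v)\ge0$ and $\d^+_X(v)+\d^-_X(v)\le\d_X(v)\le\alpha$ throughout, and tracking the interaction between the ``fix-one-arc'' decomposition at the outer level and the direction-balance optimization at the inner level.
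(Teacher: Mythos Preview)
Your decomposition loses exactly the information you need. When you fix $\phi(c)=v$ and one out-leaf $\phi(o)=w$ and then bound the remaining leaves by $(\d^+_X(v))^{k-1}\d^-_X(v)$, you are only asking them to be neighbours of $v$ with the correct orientation; you have discarded the constraint that they must be \emph{non-adjacent} to the leaf $w$. After that step your intermediate quantity is $\frac{1}{n}\sum_{v\in X}(\d^+_X(v))^k\d^-_X(v)$, and this expression simply can exceed $\frac{(k-1)^{k-1}}{(k+1)^{k+1}}\x^{k+2}$: already a random tournament on $X$ gives roughly $\x(\x/2)^{k+1}$, which for $k\ge 5$ beats the target. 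Your two invocations of \cref{l:xayb} are two different upper bounds on the same pointwise quantity $(\d^+_X(v))^k\d^-_X(v)$; the tight value under $\d^+_X(v)+\d^-_X(v)\le\x$ is $\frac{k^k}{(k+1)^{k+1}}\x^{k+1}$, and no ``careful bookkeeping'' or appeal to \cref{c:alphaU} can push it lower. The multiplicative composition you describe is impossible from this starting point.

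The paper's proof recovers the missing factor by conditioning on a leaf instead of on the center. Fixing an out-leaf $x\in X$, the center $v$ lies in $N^-_X(x)$ while the remaining $k$ leaves, being non-adjacent to $x$, must lie in $X\setminus N_X(x)$. One application of \cref{l:xayb} to $v$'s in- and out-degrees \emph{within $X\setminus N_X(x)$} yields the factor $\frac{(k-1)^{k-1}}{k^k}(\x-\d_X(x))^{k}$; a second, independent application to the product $\d_X(x)\cdot(\x-\d_X(x))^{k}$ then supplies $\frac{k^k}{(k+1)^{k+1}}\x^{k+1}$. Because the two applications bound different factors --- the center's orientation balance inside a set of measure $\x-\d_X(x)$, and the leaf's degree against its complement --- the gains genuinely multiply, giving exactly the constant $\frac{(k-1)^{k-1}}{(k+1)^{k+1}}$ you were looking for.
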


\begin{proof}
	For every $x \in X$ we have
\begin{align*}
	s_X(o \to  x) &\leq \rho_X(x)\E_{v \in N^{-}_X(x)}\s{\rho^-_{X - N_X(x)}(v)(\rho^+_{X - N_X(x)}(v))^{k-1}} 
	 \\ &\leq \rho_X(x) \s{\frac{(k-1)^{k-1}}{k^k}(\x -\rho_X(x))^{k}} \\ &\leq  \frac{(k-1)^{k-1}}{k^k}\s{\frac{k^k}{(k+1)^{k+1}}\x^{k+1}} =  \frac{(k-1)^{k-1}}{(k+1)^{k+1}}\x^{k+1}. 
	\end{align*}	
Thus
$$
s_X(G) = \x \E_{x \in X}[s_X(o \to  x)]  \leq  \frac{(k-1)^{k-1}}{(k+1)^{k+1}}\x^{k+2}, 
$$	
as desired.
\end{proof}

We use \cref{c:sx} in the following lower bound on $h(y)$ which is useful  when $\d^{-}_X(y)$ is very small.
\begin{claim}\label{c:h} 
	\begin{equation}\label{e:hlower}
	h(y) \geq \frac{(k-1)^{k-1}}{k^k} \x^{k+1}  - \s{1+ \frac{(k-1)^{k-1}}{k^k} }\rho^{-}_X(y)\x^{k} -  \frac{(k-1)^{k-1}}{(k+1)^{k+1}}\frac{\x^{k+2}}{\gamma}.
	\end{equation}
\end{claim}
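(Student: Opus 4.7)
The plan is to lower bound each of the three summands in
\[
h(y) \;=\; \tfrac{(k-1)^{k-1}}{k^k}\,\x^{k}\,\d^+_X(y) \;-\; s_2(c\to y) \;-\; \tfrac{\rho^+_X(y)}{\gamma}\,\bb{E}_{x\in N^+_X(y)}\!\bigl[s_X(o\to x)\bigr]
\]
separately, using only the estimates already proved, and then combine them.

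For the first summand, the inclusion $N_X(y)\subseteq N^+_X(y)\cup N^-_X(y)$ gives $\d^+_X(y)\geq \rho_X(y)-\rho^-_X(y)=\x-\rho^-_X(y)-\rho^0_X(y)$, so
\[
\tfrac{(k-1)^{k-1}}{k^k}\x^{k}\d^+_X(y) \;\geq\; \tfrac{(k-1)^{k-1}}{k^k}\x^{k+1} \;-\; \tfrac{(k-1)^{k-1}}{k^k}\x^{k}\bigl(\rho^-_X(y)+\rho^0_X(y)\bigr).
\]
For the second summand, a Type~$2$ map $\phi$ with $\phi(c)=y$ is forced to send the unique in-leaf into $N^-_X(y)$ and each of the $k$ out-leaves into $N^+_X(y)$, so (dropping the pairwise non-adjacency requirement for the leaves only weakens the estimate)
\[
s_2(c\to y) \;\leq\; (\rho^+_X(y))^{k}\rho^-_X(y) \;\leq\; \x^{k}\rho^-_X(y).
\]
For the third summand, I would invoke the pointwise bound $s_X(o\to x)\leq \tfrac{(k-1)^{k-1}}{(k+1)^{k+1}}\x^{k+1}$ established inside the proof of \cref{c:sx}, together with the trivial estimate $\rho^+_X(y)\leq \x$, to get
\[
\tfrac{\rho^+_X(y)}{\gamma}\,\bb{E}_{x\in N^+_X(y)}[s_X(o\to x)] \;\leq\; \tfrac{(k-1)^{k-1}}{(k+1)^{k+1}}\,\tfrac{\x^{k+2}}{\gamma}.
\]

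Substituting these three bounds into the definition of $h(y)$ and collecting terms, the coefficients of $\rho^-_X(y)\x^{k}$ from the first and second contributions sum to exactly $1+\tfrac{(k-1)^{k-1}}{k^k}$, accounting for the middle term of the claimed inequality. The main technical point I anticipate is absorbing the residual $\tfrac{(k-1)^{k-1}}{k^k}\x^{k}\rho^0_X(y)$ that arises from the expansion of $\d^+_X(y)$: the most natural way to do so is to sharpen either the second bound via $(\rho^+_X(y))^{k}\leq (\x-\rho^0_X(y))^{k}$ followed by the elementary inequality $\x^{k}-(\x-t)^{k}\geq t\,\x^{k-1}$, or the third bound via $\rho^+_X(y)\leq \x-\rho^0_X(y)-\rho^-_X(y)$, and then use the additional negative slack so produced to cancel the $\rho^0_X(y)$ contribution. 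All other steps are routine, so this cancellation is the only non-mechanical piece of the argument.
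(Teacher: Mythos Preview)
Your term--by--term decomposition is exactly the paper's approach: the paper also substitutes $s_2(c\to y)\leq \rho^-_X(y)\x^k$ and bounds the third summand via \cref{c:sx} (they use $\rho^+_X(y)\bb{E}_{x\in N^+_X(y)}[s_X(o\to x)]\leq s_X(G)$ rather than the pointwise bound, but the outcome is identical). The point you single out as ``the only non--mechanical piece'' --- disposing of the residual term $\frac{(k-1)^{k-1}}{k^k}\x^k\rho^0_X(y)$ --- is indeed the crux, and the paper's two--line proof is completely silent about it.

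Unfortunately, neither of your two suggested repairs actually produces enough slack. For the first: sharpening $s_2(c\to y)\leq (\x-\rho^0_X(y))^k\rho^-_X(y)$ and using $\x^k-(\x-t)^k\geq t\x^{k-1}$ yields an extra $\rho^0_X(y)\x^{k-1}\rho^-_X(y)$; to cancel $\frac{(k-1)^{k-1}}{k^k}\x^k\rho^0_X(y)$ you would need $\rho^-_X(y)\geq \frac{(k-1)^{k-1}}{k^k}\x$, which is not available (and the bound is invoked in \cref{c:last} precisely in the regime where $\rho^-_X(y)$ is tiny). For the second: replacing $\rho^+_X(y)$ by $\x-\rho^0_X(y)-\rho^-_X(y)$ in the third summand gains $\frac{(k-1)^{k-1}}{(k+1)^{k+1}}\frac{\x^{k+1}}{\gamma}\rho^0_X(y)$, and since $\x/\gamma\leq 2$ by \cref{c:gamma} this is at most $\frac{2(k-1)^{k-1}}{(k+1)^{k+1}}\x^k\rho^0_X(y)$; cancellation would require $2k^k\geq (k+1)^{k+1}$, which is false for every $k\geq 1$. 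Even combining both sharpenings falls short when $\rho^-_X(y)=0$.

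In fact, take $\rho^-_X(y)=0$ and $\rho^0_X(y)>0$: then $h(y)\leq \frac{(k-1)^{k-1}}{k^k}\x^k\rho^+_X(y)<\frac{(k-1)^{k-1}}{k^k}\x^{k+1}$, while the right side of \eqref{e:hlower} equals $\frac{(k-1)^{k-1}}{k^k}\x^{k+1}-\frac{(k-1)^{k-1}}{(k+1)^{k+1}}\frac{\x^{k+2}}{\gamma}$, which (again using $\x/\gamma\leq 2$ and $(k+1)^{k+1}>2k^k$) is strictly larger than $h(y)$ once $\rho^0_X(y)$ is not very small. So the inequality \eqref{e:hlower} as written cannot be established by any refinement of these three local bounds; either an additional term $-\frac{(k-1)^{k-1}}{k^k}\x^k\rho^0_X(y)$ must be carried into \cref{c:last}, or one has to separately argue (using the global hypothesis $s(y)\geq\brm{OPT}(k,1)$) that $\rho^0_X(y)$ is controlled. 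Your plan does not provide either ingredient.
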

\begin{proof}
We obtain \eqref{e:hlower} by substituting  into \eqref{e:h} the bounds \begin{align*}&s_2(c \to y) \leq \rho^{-}_X(y)\alpha^k \qquad \mathrm{and}  &\rho^{+}_X(y)\bb{E}_{x \in N^{+}_X(y)}[s_X(o \to x)] \leq s_X(G) \leq \frac{(k-1)^{k-1}}{(k+1)^{k+1}}\x^{k+2},\end{align*}
where the last inequality is from \cref{c:sx}.
\end{proof}

The next two claims give lower bounds on the negative contribution of edges in $Y$, replacing \cref{c:sy}.

\begin{claim}\label{c:prebound2} For every $x \in X$, $y \in N^+_Y(x)$ we have
	\begin{equation}\label{e:l1s1local}
 \rho^{+}_Y(x)s_1(c \to x, o \to y) \leq (1-\alpha)^{k+1} f\s{1-\frac{\d^{-}_{Y}(x)}{1-\alpha}} - \frac{\d_Y(y)}{(1-\alpha)}S_1.
	\end{equation}
\end{claim}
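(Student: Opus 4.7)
I would prove \cref{c:prebound2} by combining a combinatorial upper bound on $s_1(c \to x, o \to y)$ with algebraic manipulations mirroring the proof of \cref{l:bound2l}.

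First, fix a map $\phi \in \mc{S}_1$ with $\phi(c) = x$ and $\phi(o) = y$. In order for $\phi$ to induce an $S_{k,1}$, the in-leaf must be mapped to $N^{-}_Y(x) \setminus N_Y(y)$ (otherwise it would be adjacent to $y$, destroying the star structure), and each of the remaining $k-1$ out-leaves must be mapped to $N^{+}_Y(x) \setminus (N_Y(y) \cup \{y\})$. Setting
$$\tau_y := |N^{-}_Y(x) \cap N_Y(y)|/n, \qquad \gamma_y := |N^{+}_Y(x) \cap (N_Y(y) \cup \{y\})|/n,$$
since the coordinates of $\phi$ are chosen independently we obtain the combinatorial estimate
$$s_1(c \to x, o \to y) \leq (\d^{-}_Y(x) - \tau_y)(\d^{+}_Y(x) - \gamma_y)^{k-1}.$$
The decisive counting fact is that $N_Y(y) \subseteq (N_Y(y) \cap N_Y(x)) \cup (Y \setminus N_Y(x))$, and since $|Y \setminus N_Y(x)| = \d^{0}_Y(x) \cdot n$, this yields
$$\tau_y + \gamma_y \geq \d_Y(y) - \d^{0}_Y(x),$$
which links the combinatorial slack from excluding $N_Y(y)$ directly to the penalty $\d_Y(y) S_1/(1-\alpha)$ appearing in the statement.

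To complete the proof, write $a = \d^{+}_Y(x)$, $b = \d^{-}_Y(x)$, $c = \d^{0}_Y(x)$, so $a + b + c = 1 - \alpha$. Mimicking the chain of inequalities from \cref{l:bound2l}, monotonicity of $f$ together with $f(z) \geq z^k(1-z)$ (both from \cref{l:f}) yields
$$(1-\alpha)^{k+1} f\!\left(\frac{a+c}{1-\alpha}\right) \geq a^k b \left(1 + \frac{c}{a+b}\right)^{k+1};$$
applying Bernoulli's inequality and using $S_1 \leq a^k b$ together with $1-\alpha \geq a+b$ extracts an additive contribution of $(k+1)\, c \, S_1/(1-\alpha)$. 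The residual penalty proportional to $\tau_y + \gamma_y$ is then obtained by lower-bounding $a^k b - a(b - \tau_y)(a - \gamma_y)^{k-1}$ via tangent-line convexity estimates on $a^{k-1} - (a-\gamma_y)^{k-1}$ (or equivalently via the AM-GM estimate $a(a-\gamma_y)^{k-1} \leq (a - (k-1)\gamma_y/k)^{k}$). Combining all the contributions and invoking $\tau_y + \gamma_y + c \geq \d_Y(y)$ would produce the target bound $\d_Y(y)\, S_1/(1-\alpha)$.

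The main obstacle I anticipate is keeping the tangent-line estimates tight in the regime where $\gamma_y$ is comparable to $a = \d^{+}_Y(x)$, so that $(a-\gamma_y)^{k-1}$ is small and each unit of $\gamma_y$ contributes less than $(k-1) a^{k-2} b$ worth of penalty. Fortunately, in precisely that regime the combinatorial left-hand side is itself very small, so the inequality should hold comfortably; I expect a brief case split (for instance $\gamma_y \leq a/2$ versus $\gamma_y > a/2$) to resolve the matter, with the required constant factors coming from $k \geq 5$ (forced by $m = k+\l \geq 6$ and $\l = 1$) and from the bound $\alpha \leq 1/m$ supplied by \cref{c:alphaU}.
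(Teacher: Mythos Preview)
Your plan is sound and would succeed, but it is considerably more circuitous than the paper's route, and the ``main obstacle'' you anticipate is illusory.

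The paper's proof never separates off the non-adjacency contribution $\d^0_Y(x)$. Instead, after the combinatorial bound it relaxes all but \emph{one} of the reduced out-factors:
\[
\rho^{+}_Y(x)\, s_1(c\to x,\,o\to y)\ \leq\ \d^{-}_{Y\setminus N_Y(y)}(x)\cdot \d^{+}_{Y\setminus N_Y(y)}(x)\cdot (\rho^{+}_Y(x))^{k-1},
\]
then enlarges the surviving factors so that $\d^0_Y(x)$ is absorbed: $\rho^+_Y(x)\leq 1-\alpha-\d^-_Y(x)$ and $\d^{+}_{Y\setminus N_Y(y)}(x)\leq (1-\alpha-\d^-_Y(x))-(\d_Y(y)-(1-\alpha)\eps_1)$, where $\eps_1=\mu(N_Y(y)\cap N^-_Y(x))/(1-\alpha)$. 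The two reduced factors now have the form $(b'-\eps_1)(a'-\eps_2)$ with $a'+b'=1$ and $\eps_1+\eps_2=\d_Y(y)/(1-\alpha)$ exactly, so a single application of \cref{l:fractions} gives the multiplicative loss $(1-\d_Y(y)/(1-\alpha))$, and the proof finishes in one line using $f(z)\geq z^k(1-z)$ and $(1-\alpha)^{k+1}f(\cdot)\geq S_1$.

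Your route keeps all $k-1$ factors of $(a-\gamma_y)$ and compensates for $\d^0_Y(x)$ via the Bernoulli expansion from \cref{l:bound2l}. This works, but the residual step you worry about---showing
\[
a^k b - a(b-\tau_y)(a-\gamma_y)^{k-1} \geq \frac{\tau_y+\gamma_y}{1-\alpha}\, S_1
\]
---needs no tangent lines or case split. Since $S_1\leq a^k b$ and $a+b\leq 1-\alpha$, it suffices that $v u^{k-1}\leq \frac{au+bv}{a+b}$ where $u=(a-\gamma_y)/a$, $v=(b-\tau_y)/b\in[0,1]$; this is immediate from $vu^{k-1}\leq\min(u,v)$. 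Neither $k\geq 5$ nor \cref{c:alphaU} is needed. So your proof would go through, but the paper's absorption trick together with \cref{l:fractions} is both shorter and conceptually cleaner.
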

\begin{proof}
	Let $$\eps_1= \frac{\mu(N_Y(y) \cap N^{-}_{Y}(x))}{1 -\alpha} \qquad \mathrm{and} \qquad \eps_2 = \frac{\d_Y(y)}{1-\alpha} - \eps_1.$$
	We have 
\begin{align*}\rho^{+}_Y(x)&s_1(c \to x, o \to y) \leq  \d^{-}_{Y - N_Y(y)}(x)\d^{+}_{Y - N_Y(y)}(x)(\rho^{+}_Y(x))^{k-1}\\ &\leq  (1-\alpha)^{k+1}\s{\frac{\d^{-}_{Y}(x)}{1-\alpha} - \eps_1}\s{1 - \frac{\d^{-}_{Y}(x)}{1-\alpha} -\eps_2} \s{1 - \frac{\d^{-}_{Y}(x)}{1-\alpha}}^{k-1} \\&  \leq (1-\alpha)^{k+1}\s{1- \eps_1-\eps_2}f\s{1-\frac{\d^{-}_{Y}(x)}{1-\alpha}},\end{align*}
where the last inequality follows from \cref{l:fractions}.
As $$ (1-\alpha)^{k+1}f\s{1-\frac{\d^{-}_{Y}(x)}{1-\alpha}} \geq (\d^{+}_{Y }(x))^k\d^{-}_{Y }(x) \geq S_1,$$
\eqref{e:l1s1local} follows.
\end{proof}

\begin{claim}\label{c:l1s1} 
	\begin{equation}\label{e:l1s1}
	s_1(G) \leq \alpha(1-\alpha)^m f(d) - S_1 \bb{E}_{y \in Y}[\d_Y(y) \d^{-}_X(y)].
	\end{equation}
\end{claim}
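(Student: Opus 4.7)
My plan is to average the local bound \eqref{e:l1s1local} from \cref{c:prebound2} first over $y \in N^+_Y(x)$ (for fixed $x \in X$) and then over $x \in X$, and finally convert the resulting quantity into the target form via a double-counting identity.

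The factor $\rho^+_Y(x)$ on the left-hand side of \eqref{e:l1s1local} is placed there precisely so that the first averaging is clean. Since $\phi \in \mc{S}_1$ with $\phi(c) = x$ forces $\phi(o) \in N^+_Y(x)$, conditioning on the image of $o$ gives the identity $s_1(c \to x) = \rho^+_Y(x)\,\E_{y \in N^+_Y(x)}[s_1(c \to x, o \to y)]$. Averaging \eqref{e:l1s1local} over $y \in N^+_Y(x)$ for every $x$ with $\rho^+_Y(x)>0$ (the remaining $x$ satisfy $s_1(c \to x)=0$ and are harmless) yields
\[
s_1(c \to x) \leq (1-\alpha)^{m}\, f\!\left(1-\frac{\rho^-_Y(x)}{1-\alpha}\right) - \frac{S_1}{1-\alpha}\,\E_{y \in N^+_Y(x)}[\rho_Y(y)].
\]

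Next, I would average over $x \in X$ and apply Jensen's inequality, using that $f$ is concave on $[0,1]$ by \cref{l:f} and that $\E_{x \in X}[\rho^-_Y(x)]/(1-\alpha) = 1 - d$ by the definition of $d$. After multiplying by $\alpha$, this produces
\[
s_1(G) \leq \alpha(1-\alpha)^m f(d) - \frac{\alpha S_1}{1-\alpha}\,\E_{x \in X}\!\bigl[\E_{y \in N^+_Y(x)}[\rho_Y(y)]\bigr].
\]

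It remains to bound the last term from below by $S_1\,\E_{y \in Y}[\rho_Y(y)\rho^-_X(y)]$, which is a double-counting computation: summing $\rho_Y(y)$ over ordered pairs $(x,y)$ with $x \in X$ and $y \in N^+_Y(x)$, equivalently $y \in Y$ and $x \in N^-_X(y)$, gives
\[
\E_{y \in Y}[\rho_Y(y)\rho^-_X(y)] = \frac{\alpha}{1-\alpha}\,\E_{x \in X}\!\bigl[\rho^+_Y(x)\,\E_{y \in N^+_Y(x)}[\rho_Y(y)]\bigr] \leq \frac{\alpha}{1-\alpha}\,\E_{x \in X}\!\bigl[\E_{y \in N^+_Y(x)}[\rho_Y(y)]\bigr],
\]
where the last inequality uses only the trivial bound $\rho^+_Y(x)\leq 1$. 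Combining with the previous display proves the claim. There is no substantive obstacle here; the only subtlety is tracking the weighted versus unweighted averages, and the trivial bound $\rho^+_Y(x)\leq 1$ happens to provide exactly the slack needed to pass from the naturally weighted expression produced by averaging to the unweighted target $\E_{y \in Y}[\rho_Y(y)\rho^-_X(y)]$.
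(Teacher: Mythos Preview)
Your proposal is correct and follows essentially the same route as the paper's proof: average \eqref{e:l1s1local} over $y\in N^+_Y(x)$, then over $x\in X$ using concavity of $f$, and finish with the double-counting identity. The paper compresses your last step into a single inequality without displaying the intermediate identity $\E_{y\in Y}[\rho_Y(y)\rho^-_X(y)] = \frac{\alpha}{1-\alpha}\E_{x\in X}[\rho^+_Y(x)\,\E_{y\in N^+_Y(x)}\rho_Y(y)]$ or the bound $\rho^+_Y(x)\le 1$, but the argument is the same.
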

\begin{proof}
Averaging \eqref{e:l1s1local} over $y \in N^+_Y(x)$ we obtain 
\begin{equation}\label{e:l1s12}
s_1(c \to x) \leq (1-\alpha)^{m} f\s{1-\frac{\d^{-}_{Y}(x)}{1-\alpha}} - \frac{S_1}{1 - \alpha} \bb{E}_{y \in N^+_Y(x)}[\d_Y(y)].
\end{equation}
We now average \eqref{e:l1s12} over $x \in X$ and use concavity of $f$, as in  the proof of \cref{c:bound2l}:
\begin{align*}
s_1(G) &= \alpha \bb{E}_{x \in X}[s_1(c \to x)]   \\
& \leq \alpha (1-\alpha)^m \bb{E}_{x \in X}\left[f\s{1-\frac{\d^{-}_{Y}(x)}{1-\alpha}}\right] - S_1\frac{\alpha}{1-\alpha}\bb{E}_{x \in X}\s{ \bb{E}_{y \in N^+_Y(x)}\d_Y(y)} \\ &\leq \alpha (1-\alpha)^m f(d)- S_1 \bb{E}_{y \in Y}\s{\d_Y(y) \d^{-}_X(y)},
\end{align*}	
as desired.
\end{proof}

Finally, we need a new bound on $s_Y(G)$.

\begin{claim}\label{c:l1s2} 
	\begin{equation}\label{e:l1s2}
	s_Y(G) \leq    (1 - \x) \frac{(k-1)^{k-1}}{2k^k}\y^{k}\E_{y \in Y}\s{\rho_Y(y)}.
	\end{equation}
\end{claim}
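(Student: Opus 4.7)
The plan is to mimic the conditioning argument of \cref{l:gooddensity}: for each $\phi\in\mathcal{S}_Y$, I fix the arc from $\phi(c)$ to $\phi(o_1)$ and estimate the rest of the map. The refinement needed to obtain the average $\E_{y\in Y}[\rho_Y(y)]$ rather than a maximum over $Y$ is to apply \cref{l:xayb} to the integrand \emph{before} summing over the second endpoint of the arc.

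For any $\phi\in\mathcal{S}_Y$ the pair $(\phi(c),\phi(o_1))=(y_0,y_1)$ lies in $Y\times Y$ with $y_0y_1\in E(G)$, and conditional on this pair the remaining vertices $\phi(i),\phi(o_2),\ldots,\phi(o_k)$ are independent and uniform in $V(G)$. Demanding $\phi(i)\in N^-_Y(y_0)$ and $\phi(o_j)\in N^+_Y(y_0)$ for $j\ge 2$ (and ignoring the non-adjacency and distinctness constraints among leaves) yields
$$ s_Y(G)\le \frac{1}{n^2}\sum_{\substack{y_0,y_1\in Y\\ y_0y_1\in E(G)}}\rho^-_Y(y_0)\bigl(\rho^+_Y(y_0)\bigr)^{k-1}. $$
Applying \cref{l:xayb} to each summand with exponents $(k-1,1)$, and using that $N^+_Y(y_0)$ and $N^-_Y(y_0)$ are disjoint (as is implicit in \cref{l:gooddensity}), gives
$$ \rho^-_Y(y_0)\bigl(\rho^+_Y(y_0)\bigr)^{k-1}\le \frac{(k-1)^{k-1}}{k^k}\bigl(\rho^+_Y(y_0)+\rho^-_Y(y_0)\bigr)^k=\frac{(k-1)^{k-1}}{k^k}\rho_Y(y_0)^k\le \frac{(k-1)^{k-1}}{k^k}\beta^k. $$
The remaining sum just counts arcs inside $G[Y]$, which equals $|E(G[Y])|=\tfrac{1}{2}\sum_{y\in Y}|N_Y(y)|=\tfrac{n^2(1-\alpha)}{2}\E_{y\in Y}[\rho_Y(y)]$; multiplying these three ingredients together yields the desired bound.

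The one point to watch is the order of operations: if one first collapses the sum over $y_1\in N^+_Y(y_0)$, absorbing an extra factor of $\rho^+_Y(y_0)$, and only then applies \cref{l:xayb} with exponents $(k,1)$, one obtains the constant $\tfrac{k^k}{(k+1)^{k+1}}$ in place of $\tfrac{(k-1)^{k-1}}{2k^k}$ -- weaker by an asymptotic factor of $2$ -- which would be insufficient for the subsequent comparison with $\brm{OPT}(k,1)$. Thus it is essential to invoke \cref{l:xayb} on the summand first and only then count the arcs.
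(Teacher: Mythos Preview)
Your proof is correct and is essentially the same as the paper's: both bound the contribution of a center $y_0\in Y$ by $(\rho^+_Y(y_0))^{k}\rho^-_Y(y_0)$, apply \cref{l:xayb} with exponents $(k-1,1)$ to extract the constant $\frac{(k-1)^{k-1}}{k^k}\beta^k$, and then convert the leftover factor $\rho^+_Y(y_0)$ into $\tfrac12\rho_Y(y_0)$ upon averaging. The only cosmetic difference is that the paper writes this last step as the cancellation $\E_{y\in Y}[\rho^+_Y(y)-\rho^-_Y(y)]=0$, whereas you phrase it as the arc-count identity $|E(G[Y])|=\tfrac12\sum_{y\in Y}|N_Y(y)|$.
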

\begin{proof}
	For every $y \in Y$ we have 
\begin{align*}	s_Y(c \to y) &\leq \s{(\rho^{+}_Y(y))^{k-1}(\rho_Y(y) - \rho^{+}_Y(y))}\rho^{+}_Y(y)  \\ 
& \leq \frac{(k-1)^{k-1}}{k^k}(\rho_Y(y))^k\rho^{+}_Y(y) \leq  \frac{(k-1)^{k-1}}{2k^k}\y^{k}\s{\rho_Y(y) + (\rho^{+}_Y(y)-\rho^-_{Y}(y))} 
	\\ &=  \frac{(k-1)^{k-1}}{2k^k}\y^{k}\rho_Y(y) + \frac{(k-1)^{k-1}}{2k^k}\y^{k}(\rho^{+}_Y(y)-\rho^-_{Y}(y)).
	\end{align*}
Then since $\E_{y \in Y}(\rho^{+}_Y(y)-\rho^-_{Y}(y))=0$, we have
\begin{align*}
s_Y(G) &= (1 - \x)\E_{y \in Y}\s{s_Y(c \to y) -  \frac{(k-1)^{k-1}}{2k^k}\y^{k}(\rho^{+}_Y(y)-\rho^-_{Y}(y))} \\
&\leq    (1 - \x) \frac{(k-1)^{k-1}}{2k^k}\y^{k}\E_{y \in Y}\s{\rho_Y(y)},
\end{align*}
as desired.
\end{proof}

\begin{claim}\label{c:l1s}	
	\begin{align}\label{e:last}
	s(G) &\leq \alpha(1-\alpha)^m f(d) + \frac{(k-1)^{k-1}}{k^{k}}(1-d) \x^m (1-\x) \notag \\ &+ (1-\alpha)\bb{E}_{y \in Y}\left( \frac{(k-1)^{k-1}}{2k^k}\y^{k}\d_Y(y) - \frac{S_2}{1 - \alpha}\d_Y(y)\d^{-}_X(y) -h(y)\right).
	\end{align}
\end{claim}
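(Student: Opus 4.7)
The plan is to assemble the claim by summing the bounds proved earlier in this subsection. Concretely, I would write
\[s(G) = s_1(G) + s_2(G) + s_X(G) + s_Y(G) + s_0(G),\]
bound $s_1(G)$ using \cref{c:l1s1}, $s_2(G)+s_X(G)$ using \cref{c:s2revised}, $s_Y(G)$ using \cref{c:l1s2}, and $s_0(G)$ using \cref{c:nonedge2}. Adding the four resulting inequalities gives an upper bound on $s(G)$ of the same shape as the right-hand side of \eqref{e:last}: the leading $\alpha(1-\alpha)^m f(d)$ comes from \cref{c:l1s1}, the $\frac{(k-1)^{k-1}}{k^k}(1-d)\alpha^m(1-\alpha)$ term from \cref{c:s2revised}, while the $\frac{(k-1)^{k-1}}{2k^k}\beta^k\d_Y(y)$ piece inside the bracket is supplied by \cref{c:l1s2} and the $-h(y)$ piece by \cref{c:s2revised}. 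The only discrepancy is that the coefficient of $\E_{y\in Y}[\d_Y(y)\d^-_X(y)]$ delivered by \cref{c:l1s1} is $-S_1$ rather than the $-S_2$ appearing in the target, and there is an extra positive contribution coming from the $s_0$ bound.

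The key algebraic manipulation is to use the definition \eqref{e:s2}, which rearranges to $S_1 - S_2 = \frac{m-1}{(m+1)\binom{m}{k}}(\alpha+\beta)(1-\alpha)(1-D)^{m-2}$, to split
\[-S_1 \E_{y \in Y}[\d_Y(y)\d^-_X(y)] = -S_2\E_{y \in Y}[\d_Y(y)\d^-_X(y)] - (S_1 - S_2)\E_{y \in Y}[\d_Y(y)\d^-_X(y)].\]
The first piece is precisely the $-\frac{S_2}{1-\alpha}\d_Y(y)\d^-_X(y)$ contribution inside the target's bracket, while the second piece is a non-positive buffer intended to absorb the $s_0(G)$ term. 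In this way the claim reduces to the absorption inequality
\[s_0(G) \leq (S_1 - S_2) \E_{y \in Y}[\d_Y(y)\d^-_X(y)].\]

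The hardest part is precisely this absorption step. Applying the crude global estimate from \cref{c:nonedge2} directly would only reduce the inequality to $m d_0 \leq (1-\alpha)\E_{y\in Y}[\d_Y(y)\d^-_X(y)]$, which need not hold in general. I therefore expect the proof to refine the bound on $s_0(G)$ by splitting type-0 maps according to the positions of the center and of the in-leaf and by exploiting the pairwise non-adjacency of the leaves of $\Sl$ -- so that each type-$0$ map witnesses a structured triple $(y,x,y')$ with $y \in Y$, $x \in N^-_X(y)$, and $y' \in N_Y(y)$ of the kind counted by $\E_{y\in Y}[\d_Y(y)\d^-_X(y)]$. Combined with the parameter control from \cref{sec:claim}, in particular \cref{c:alphaU} giving $\alpha\leq 1/m$ and \cref{c:gamma} giving $\gamma \geq \alpha/2$, such a refined local estimate should produce a bound on $s_0(G)$ of precisely the size of the buffer $(S_1-S_2)\E_{y\in Y}[\d_Y(y)\d^-_X(y)]$, thereby closing the argument.
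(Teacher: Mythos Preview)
Your assembly strategy and identification of the discrepancy (the $-S_1$ versus $-S_2$ coefficient, with the extra $s_0(G)$ to absorb) are both correct. The gap is in how you propose to close that discrepancy.

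The ``refined local estimate'' on $s_0(G)$ you sketch does not work. A type-$0$ map is witnessed by a non-adjacent pair of leaves, one in $X$ and one in $Y$; that is precisely why \cref{c:nonedge2} controls $s_0(G)$ by $d_0$. There is no mechanism forcing such a map to also witness a triple $(y,x,y')$ with $x\in N^-_X(y)$ and $y'\in N_Y(y)$: the center could lie in $X$, and even when the center is in $Y$ the in-leaf need not land in $X$. So $s_0(G)$ is genuinely governed by $d_0$, not by $\E_{y\in Y}[\d_Y(y)\d^-_X(y)]$, and your absorption inequality fails exactly for the reason you anticipated.

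The paper's fix is different and much simpler: it does \emph{not} bound $s_1(G)$ by \cref{c:l1s1} alone. Instead it takes a convex combination of the two available bounds on $s_1(G)$, namely \eqref{e:s1i} from \cref{c:bound2l} and \eqref{e:l1s1} from \cref{c:l1s1}, with weight $\lambda=(S_1-S_2)/S_1$ on \eqref{e:s1i}. This choice makes the coefficient of $\E_{y\in Y}[\d_Y(y)\d^-_X(y)]$ equal to $-S_2$, while the $-\lambda\frac{m}{1-\alpha}S_1 d_0$ contribution from \eqref{e:s1i} becomes, by the definition \eqref{e:s2} of $S_2$, exactly
\[
-\frac{m(m-1)}{(m+1)\binom{m}{k}}(\alpha+\beta)(1-D)^{m-2}d_0,
\]
which by \cref{c:nonedge2} dominates $s_0(G)$. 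Thus $s_1(G)+s_0(G)\le \alpha(1-\alpha)^m f(d)-S_2\E_{y\in Y}[\d_Y(y)\d^-_X(y)]$, and adding \eqref{e:s2revised} and \eqref{e:l1s2} finishes the claim. The missing idea, then, is to bring \cref{c:bound2l} back into play; once you do, no new estimate on $s_0(G)$ is needed.
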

\begin{proof}
	By taking a convex combination of \eqref{e:s1i} and \eqref{e:l1s1} we obtain
	$$
	s_1(G) \leq  \alpha(1-\alpha)^m f(d) - S_2 \bb{E}_{y \in Y}[\d_Y(y) \d^{-}_X(y)] - \frac{(m-1)m}{(m+1)\binom{m}{k}}(\alpha+\beta)(1-D)^{m-2}d_0.
	$$
	Thus by \cref{c:nonedge2} we have
	\begin{equation}\label{e:l1s1final}	
	s_1(G) + s_0(G) \leq  \alpha(1-\alpha)^m f(d) - S_2 \bb{E}_{y \in Y}[\d_Y(y) \d^{-}_X(y)]. 
	\end{equation}
	Adding together \eqref{e:s2revised} and \eqref{e:l1s2} we obtain \eqref{e:last}.
\end{proof}

It now suffices to show that the contribution of every $y \in Y$ to the expectation in \cref{e:last} is non-positive, which we do in the next claim.

\begin{claim}\label{c:last} 
For every $y \in Y$ we have
	\begin{equation}\label{e:hlocal}	
 \frac{S_2}{1 - \alpha}\d_Y(y)\d^{-}_X(y) + h(y) - \frac{(k-1)^{k-1}}{2k^k}\y^{k}\d_Y(y) \geq 0.
	\end{equation}
\end{claim}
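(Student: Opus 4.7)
Write $r := \rho_Y(y)$ and $t := \rho^-_X(y)$ for brevity. If $r = 0$ the inequality collapses to $h(y) \ge 0$, which is already known, so I may assume $r > 0$. The strategy is to combine the lower bound $\frac{S_2}{1-\alpha} \ge \frac{\beta^k}{\alpha}$ coming from \cref{c:S2} (specialised to $\ell = 1$, so that $\lambda_1 = 1$) with a dichotomy on the size of $t$.

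If $t \ge \frac{(k-1)^{k-1}\alpha}{2k^k}$, then
\[
\frac{S_2 t r}{1-\alpha} \;\ge\; \frac{\beta^k t r}{\alpha} \;\ge\; \frac{(k-1)^{k-1}\beta^k r}{2k^k},
\]
and the inequality follows from $h(y) \ge 0$. Otherwise $t < \frac{(k-1)^{k-1}\alpha}{2k^k}$; here I invoke \cref{c:h} combined with \cref{c:final} (which absorbs the $\alpha/\gamma$-dependent term) to obtain
\[
h(y) \;\ge\; \frac{(k-1)^{k-1}\beta^{k+1}}{2k^k} \;-\; C t \alpha^k, \qquad C := 1 + \frac{(k-1)^{k-1}}{k^k}.
\]
Plugging this into the target and rearranging reduces the claim to
\[
\frac{(k-1)^{k-1}\beta^k(\beta - r)}{2k^k} \;+\; t\!\left(\frac{S_2 r}{1-\alpha} - C\alpha^k\right) \;\ge\; 0.
\]
The first summand is non-negative since $r \le \beta$. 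If $\frac{S_2 r}{1-\alpha} \ge C\alpha^k$, the parenthesised quantity is too, and the inequality follows.

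The main obstacle is the remaining sub-case, in which the second summand is negative. One must argue that the slack from $r \le \beta$ in the first summand, combined with the upper bound $t < \frac{(k-1)^{k-1}\alpha}{2k^k}$ controlling the magnitude of the negative term and with the comparison between $\alpha^{k+1}$ and $\beta^{k+1}$ encoded in \cref{c:final}, is sufficient. This step is essentially constant-chasing with no new structural input, but one must be careful with the exact coefficients, especially the factor $1/2$ in \cref{c:final} and the constant $C$; the bound $\beta^k r < C\alpha^{k+1}$ (obtained in this sub-case by combining its hypothesis with \cref{c:S2}) is what ultimately lets the $\beta-r$ slack absorb the $t C\alpha^k$ error.
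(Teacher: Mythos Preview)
Your overall strategy mirrors the paper's (split on the size of $\rho^-_X(y)$, use $h(y)\ge 0$ in the ``large'' range and \cref{c:h} in the ``small'' range), but the final sub-case is not routine constant-chasing and in fact fails as written. The problem is your premature weakening of \cref{c:h} through \cref{c:final}: you replace the constant term $\frac{(k-1)^{k-1}}{k^k}\alpha^{k+1}-\frac{(k-1)^{k-1}}{(k+1)^{k+1}}\frac{\alpha^{k+2}}{\gamma}$ by $\frac{(k-1)^{k-1}}{2k^k}\beta^{k+1}$, trading an $\alpha$-controlled quantity for a $\beta$-controlled one. Nothing in the hypotheses prevents $\beta\ll\alpha$ (indeed the extremal graphs have $\beta=0$), and in that regime your reduced inequality is simply false. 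Concretely, your bilinear expression $\frac{(k-1)^{k-1}\beta^k(\beta-r)}{2k^k}+t\bigl(\frac{S_2 r}{1-\alpha}-C\alpha^k\bigr)$, minimised over the box $0\le r\le\beta$, $0\le t<\frac{(k-1)^{k-1}\alpha}{2k^k}$, attains its minimum at the corner $r\to 0$, $t\to\frac{(k-1)^{k-1}\alpha}{2k^k}$, where it equals $\frac{(k-1)^{k-1}}{2k^k}(\beta^{k+1}-C\alpha^{k+1})$. This is negative whenever $\beta^{k+1}<C\alpha^{k+1}$, so your ``$\beta-r$ slack absorbs the $tC\alpha^k$ error'' claim cannot be salvaged by any constant-chasing.

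The paper avoids this by keeping the full lower bound from \cref{c:h} and choosing the threshold on $\phi=\rho^-_X(y)$ to be $\phi_{\max}=\frac{(k-1)^{k-1}}{2k^k}\frac{(1-\alpha)\beta^k}{S_2}$ rather than your $\frac{(k-1)^{k-1}\alpha}{2k^k}$. With this choice the expression is linear in $\phi$ on $[0,\phi_{\max}]$; at $\phi=0$ one gets exactly \cref{c:final}, while at $\phi=\phi_{\max}$ the two $\psi$-dependent terms cancel identically, leaving a pure inequality in $\alpha$ and $\gamma$ (namely $1-\frac{2k^k}{(k+1)^{k+1}}-\tfrac12 C\ge 0$) that holds for $k\ge 5$. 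The cancellation at $\phi_{\max}$ is the structural point you are missing; once you weaken via \cref{c:final} first, it is no longer available.
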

\begin{proof}
	Let $\psi = \d_Y(y)$ and $\phi = \d^{-}_X(y)$. As $h(y) \geq 0$, \eqref{e:hlocal} holds as long as
	$\frac{S_2}{1 - \alpha}\phi \geq \frac{(k-1)^{k-1}}{2k^k}\y^{k}$. 
	Thus we assume $0 \leq \phi \leq  \frac{(k-1)^{k-1}}{2k^k}\frac{(1 -\alpha)}{S_2}\y^{k}$.
	By \eqref{e:hlower} it suffices to show that 
	\begin{equation}\label{e:pp} \frac{(k-1)^{k-1}}{k^k} \x^{k+1} -  \frac{(k-1)^{k-1}}{(k+1)^{k+1}}\frac{\x^{k+2}}{\gamma} - \s{1+ \frac{(k-1)^{k-1}}{k^k} }\x^{k}\phi +  \frac{S_2}{1 - \alpha}\phi\psi - \frac{(k-1)^{k-1}}{2k^k}\y^{k}\psi \geq 0.\end{equation}
	The left side of this inequality is a linear function of $\phi$, so it suffices to verify it for $\phi=0$ and $\phi =\frac{(k-1)^{k-1}}{2k^k}\frac{(1 -\alpha)}{S_2}\y^{k}$.

	For $\phi=0$ we may additionally assume $\psi= \beta$ as it appears with the negative coefficient. Thus in this case \eqref{e:pp} reduces to
		$$ \frac{(k-1)^{k-1}}{k^k} \x^{k+1} -  \frac{(k-1)^{k-1}}{(k+1)^{k+1}}\frac{\x^{k+2}}{\gamma} - \frac{(k-1)^{k-1}}{2k^k}\y^{k+1} \geq 0,$$
		which holds by \eqref{e:final}.
	
		Finally, for  $\phi =\frac{(k-1)^{k-1}}{2k^k}\frac{(1 -\alpha)}{S_2}\y^{k}$,  \eqref{e:pp} reduces to \begin{equation}\label{e:pp2} \frac{(k-1)^{k-1}}{k^k} \x^{k+1} -  \frac{(k-1)^{k-1}}{(k+1)^{k+1}}\frac{\x^{k+2}}{\gamma} - \s{1+ \frac{(k-1)^{k-1}}{k^k} }\x^{k}\frac{(k-1)^{k-1}}{2k^k}\y^{k}\frac{(1 -\alpha)}{S_2} \geq 0.\end{equation}
		By Claims \ref{c:gamma} and \ref{c:S2}, we have $\frac{\alpha}{\gamma} \leq 2$ and
		$S_2 \geq \frac{1 - \alpha}{\alpha}\beta^{k}.$ Dividing \eqref{e:pp2} by $\frac{(k-1)^{k-1}}{k^k}\alpha^{k+1}$ and substituting the above bounds we obtain
		$$1 - \frac{2k^k}{(k+1)^{k+1}} - \frac{1}{2}\s{1+ \frac{(k-1)^{k-1}}{k^k}} \geq 0,$$
		which clearly holds as $k \geq 5$.
\end{proof}

With all the ingredients in place we can easily finish the proof of \cref{l:upper} in the last remaining case.

\begin{proof}[Proof of \cref{l:upper} in the case $\l = 1$.]
	By Claims~\ref{c:l1s} and~\ref{c:last} we have 	
 $$s(G) \leq \alpha(1-\alpha)^{m} f(d) + \frac{(k-1)^{k-1}\l^\l}{(m-1)^{m-1}}(1-d) \x^m (1-\x) \leq \brm{OPT}(k,1),$$
	where the second inequality follows from Lemma~\ref{l:f2} (just as in the case $k > \l \geq 2$).
\end{proof}

\section{Concluding remarks}\label{s:remarks}

\subsubsection*{Stability}

The inducibility problem solved in \cref{Thm:main} is \emph{stable} in the following sense: For $k+\l \geq 6$ every sufficiently large graph with the induced density of $S_{k,\l}$ sufficiently close to the maximum has the structure which is close to the optimal one described in the proof of \cref{l:lower}. We make this statement precise for $k>\l$ in the next theorem.\footnote{In the case $k=\l$ the analogous result also holds.}  It is obtained by careful, yet straightforward examination of the inequalities used in the proof of \cref{Thm:main}. We omit the details.

\begin{thm}\label{t:stability} For all integers $k>\l \geq 1$ such that  $k+\l \geq 6$ and every $\eps > 0$, there exist  $\delta, n_0 > 0$ satisfying the following. 
	
	Let $(\alpha,d) \in [0,\frac12]\times [0,\frac{k}{k+\ell}]$ maximize the expression in the statement of \cref{Thm:main}, and let $G$ be a digraph such that $n=|V(G)|\geq n_0$, and $i(\SK,G) \geq (1 - \delta)i(\SK).$  
	Then there exists a partition $(X,Y_1,Y_2)$ of $V(G)$ such that \begin{enumerate} \item $|\mu(X) - \alpha| \leq \eps$, \item $|\mu(Y_1) -\frac{k+\l-1}{k-1}(1-d)(1-\alpha)| \leq \eps $ and $|\mu(Y_2) -\s{1-\frac{k+\l-1}{k-1}(1-d)}(1-\alpha)| \leq \eps$, 
		\item $|E(G[X])| + |E(G[Y_1 \cup Y_2])| \leq \eps n^2$,
		\item at most $\eps n$ vertices $x \in X$ fail to satisfy $$|\d^+_{Y_1 \cup Y_2}(x) - d(1-\x)| \leq \eps \qquad \mbox{and} \qquad |\d^-_{Y_1 \cup Y_2}(x) - (1-d)(1-\x)| \leq \eps,$$
		\item at most $\eps n$ vertices $y \in Y_1$ fail to satisfy  $|\d^+_X(x) - \frac{l}{k+\l-1}\x| \leq \eps$ and $|\d^-_X(x) - \frac{k-1}{k+\l-1}\x| \leq \eps$, and 
		\item finally, at most $\eps n$ vertices $y \in Y_2$ fail to satisfy  $\d^-_X(x) \geq \x -\eps$.
		 \end{enumerate}
\end{thm}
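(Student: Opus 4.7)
My plan is to deduce \cref{t:stability} from a quantitative reexamination of the proof of \cref{l:upper}, tracking the slack in each inequality and translating it into structural information. As a setup, I would first use the symmetrization argument from the proof of \cref{t:main2}: if $s(G)\geq (1-\delta)\brm{OPT}(k,\ell)$, then the fraction of vertices with $s(v) \leq (1 - \sqrt{\delta})\brm{OPT}(k,\ell)$ is at most $\sqrt{\delta}\cdot n$, and iteratively deleting such vertices and duplicating a high-$s$ vertex decreases $s(G)$ by only $o(1)$. Hence I may assume $s(v) \geq (1 - \delta')\brm{OPT}(k,\ell)$ at \emph{every} $v$, with $\delta' = O(\sqrt{\delta}) \to 0$. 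Applying \cref{c:degree} at every $v$ and noting that the upper bound in \eqref{eq:sx} is strictly below $\brm{OPT}(k,\ell)$ on any fixed subinterval of $(0,1)$ bounded away from $0$ and $1$ (the function $\lambda_0 x^m + \lambda_1 x(1-x)^{m-1}$ is bimodal), every vertex must have $\rho(v)$ close to $0$ or close to $1$; this defines the partition of $V(G)$ into $X$ (with $D \geq 1-o(1)$) and $Y$ (with $\beta \leq o(1)$).

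Next, near-equality in \eqref{e:alpha} and \eqref{e:AL} pins $\alpha$ to its optimal value, giving item 1; near-equality in the outer chain $s(G) \leq \alpha(1-\alpha)^m f(d) + \frac{(k-1)^{k-1}\ell^\ell}{(m-1)^{m-1}}(1-d)\alpha^m(1-\alpha) \leq \brm{OPT}(k,\ell)$ combined with the strict concavity from \cref{l:f2} pins $d$. For item 3, the density $\d(X)$ contributes to the slack of \cref{c:bound0} via the inequality $\alpha(\alpha-\rho_X(x))^{m-1} + (m-1)\rho_X(x)(\alpha-\rho_X(x))^{m-1} \leq \alpha^m$ (strict unless $\rho_X(x) \in \{0, \alpha\}$, and the second regime is ruled out by $\alpha=o(1)$ combined with the other items), while $\d(Y)$ appears with positive coefficient in \cref{c:s1y} and \cref{l:gooddensity} (for $\ell \geq 2$) or in \cref{c:last} through the $\y^k$ term (for $\ell = 1$); in each case near-equality forces the density to be $o(1)$. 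For item 4, averaging \eqref{e:six0} over $x \in X$ and using strict concavity of $f$ on $[\frac{k-1}{m-1}, \frac{k}{m}]$ shows that for all but $\eps n$ vertices $x \in X$ the ratio $\rho^-_Y(x)/(1-\alpha)$ lies within $\eps$ of $1-d$; the slack coefficient of $\rho^0_Y(x)$ in \eqref{e:six0} yields $\rho^0_Y(x) = o(1)$, from which the out-degree statement follows.

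The main obstacle, as I see it, is identifying the bipartition $Y = Y_1 \cup Y_2$ (items 2, 5, 6). The key observation is that in the proof of \cref{c:bound1}, the bound $s_2(c \to y) \leq \frac{(k-1)^{k-1}\ell^\ell}{(m-1)^{m-1}}\alpha^{m-1}\rho^+_X(y)$ is an equality precisely when either $\rho^+_X(y) = 0$, or the product $\rho^+_X(y)^{k-1}\rho^-_X(y)^\ell$ attains its maximum on $\{(x,z):x+z\leq\alpha\}$, i.e.\ $(\rho^+_X(y), \rho^-_X(y)) = (\frac{k-1}{m-1}\alpha, \frac{\ell}{m-1}\alpha)$. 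Once near-tightness of \cref{c:s2} is established, averaging implies that for all but $\eps n$ vertices $y \in Y$ one of these two regimes holds; labelling them $Y_1$ and $Y_2$, the double-counting identity $(1-\alpha) \bb{E}_{y\in Y}[\rho^+_X(y)] = \alpha \bb{E}_{x \in X}[\rho^-_Y(x)] = \alpha(1-d)(1-\alpha)$ (applied together with item 4) forces $|Y_1|/|Y| \approx \frac{m-1}{k-1}(1-d)$ and hence items 2 and 5; item 6 then follows from $\rho^-_X(y) \geq \alpha - \rho^+_X(y) - \rho^0_X(y) \approx \alpha$ on $Y_2$. In the case $\ell = 1$ the slack function $h(y)$ in \eqref{e:h} plays the role of a quantitative surrogate for this dichotomy; near-tightness of \cref{c:last}, together with the linearity of \eqref{e:hlocal} in $\phi = \rho^-_X(y)$ and its endpoint-tightness at $\phi \in \{0, \alpha\}$, ensures only $\eps n$ vertices can fall into the intermediate regime, which is where the most delicate error control is required.
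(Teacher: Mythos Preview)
The paper does not actually prove \cref{t:stability}; it merely asserts that the result ``is obtained by careful, yet straightforward examination of the inequalities used in the proof of \cref{Thm:main}'' and explicitly writes ``We omit the details.'' Your outline is precisely such a slack-tracking examination of the chain of inequalities from \cref{s:upper}, so it coincides with the paper's stated (but unwritten) approach.

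Two minor corrections to your write-up. First, the assertion ``$D \geq 1-o(1)$'' is wrong: in the extremal examples every $x\in X$ has $\rho(x)=1-\alpha\approx m/(m+1)$, so $D$ tends to $1-\alpha$, not to $1$. What you actually need (and what the appendix claims such as \cref{A:xdegree} and \cref{A:Dpower} deliver) is that $1-D$ is close to $1/(m+1)$, which is the input the later estimates use. Likewise, ``$\beta\leq o(1)$'' does not follow from \cref{c:degree} alone (that claim only controls $\rho(v)$, not $\rho_Y(v)$); it comes instead from near-tightness in \cref{c:B}, or equivalently from your item-3 density argument. Second, in the setup step, duplicating high-$s$ vertices alters $G$ and says nothing about the original graph; for stability it is cleaner to simply delete the at most $O(\sqrt{\delta})n$ vertices with small $s(v)$, run the argument on the subgraph, and absorb the deleted vertices into the exceptional sets allowed by items 3--6. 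Neither point affects the overall strategy.
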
 

It is likely that the methods used in the proof of  \cref{Thm:main} can also be used to obtain an exact version of \cref{t:stability}. More precisely, we believe that  if $G$ is an $n$-vertex  digraph, which satisfies $i(\SK,G) = i(\SK,n)$ for $n$ sufficiently large as a function of $\eps,$ then there exists a partition  $(X,Y_1,Y_2)$ such that the conditions 1 and 2 of \cref{t:stability} still hold, conditions 4, 5 and 6 hold for all vertices of $X,~Y_1$ and $Y_2$, respectively, and condition 3 is replaced by the following stronger property   
\begin{itemize} \item[3$'$.]  $G[X]$ and $G[Y_1 \cup Y_2]$ are edgeless, and every vertex of $X$ is adjacent to every vertex of $Y_1 \cup Y_2$. 
\end{itemize}
However, unlike \cref{t:stability}, the above result does not directly follow from the bounds we established, and we leave its validity open.

\subsubsection*{Approximating the optimum}

\cref{Thm:main} expresses inducibility of oriented stars in terms of a solution to a polynomial optimization problem. This is unavoidable, as in general, the resulting optimization problem has no closed form solution. However, it is possible to approximate this solution, and thus the inducibility with great precision.

For example, for $k>\l$  considering Taylor series of 
$$F(\alpha,d)= \alpha(1-\alpha)^{k+\l}d^k(1-d)^\ell+\frac{(k-1)^{k-1}\l^\l}{(k+\ell-1)^{k+\l-1}}(1-\alpha)\alpha^{k+\l}(1-d).$$
at a point $(\frac{1}{k+\l+1},\frac{k}{k+\l})$, we obtain that $F$ is maximized on $[0,\frac12]\times [0,\frac{k}{k+\ell}]$	when \begin{align*}\alpha&=\frac{1}{k+\l+1}\s{1+\frac{\l}{k(k+\l)^{k+\l-2}}+o_k\s{\frac{1}{(k+\l)^{k+\l-1}}}}, \\
d &= \frac{k}{k+\l}\s{1-\frac{\l}{k(k+\l)^{k+\l}}+o_k\s{\frac{1}{(k+\l)^{k+\l+1}}}},
\end{align*}
and 
\begin{align*}\max_{(\alpha, d)\in [0,\frac12]\times [0,\frac{k}{k+\ell}]} F(\alpha,d) & = \frac{k^k\l^\l}{(k+\l+1)^{k+\l+1}} + \frac{(k-1)^{k-1}\l^{\l+1}(k+\l)}{(k+\l+1)^{k+\l+1}(k+\ell-1)^{k+\l-1}} \times \\ &  \s{1+\frac{\l}{2k(k+\l)^{k+\l-3}}+o_k\s{\frac{1}{(k+\l)^{k+\l-2}}}}.\end{align*}
Already for $k=4,\l=2$ the resulting approximation of the solution to our maximization problem is correct up to eight significant digits. 

\subsubsection*{Flag algebras}

\cref{Thm:main} might extend to the cases $(k,\l) \in \{(2,1),(3,1),(4,1)\}$, although we were unable to resolve these cases using our techniques. These  cases, however, might be amenable to analysis using flag algebras.
For example, we are able to solve the case $(k,\l)=(2,1)$. But since it is standard application of flag algebras we omit details here. 
 (see Appendix~\ref{sec:fm} for numerical computations using Flagmatic \cite{JS,EV} for $(k,\l)=(2,1)$).

\bibliographystyle{abbrv}
\bibliography{snorin}

\begin{thebibliography}{10}

\bibitem{sage}
{S}agemath, the {S}age {M}athematics {S}oftware {S}ystem ({V}ersion 8.1).
\newblock The Sage Developers, {\tt https://www.sagemath.org}, 2017.

\bibitem{BHLP}
J.~Balogh, P.~Hu, B.~Lidick\'{y}, and F.~Pfender.
\newblock Maximum density of induced 5-cycle is achieved by an iterated blow-up
  of 5-cycle.
\newblock {\em European J. Combin.}, 52(part A):47--58, 2016.

\bibitem{BS}
J.~I. Brown and A.~Sidorenko.
\newblock The inducibility of complete bipartite graphs.
\newblock {\em J. Graph Theory}, 18(6):629--645, 1994.

\bibitem{CLP20}
I.~Choi, B.~Lidick\'{y}, and F.~Pfender.
\newblock Inducibility of directed paths.
\newblock {\em Discrete Math.}, 343(10):112015, 11, 2020.

\bibitem{FV}
V.~Falgas-Ravry and E.~R. Vaughan.
\newblock Tur\'{a}n {$H$}-densities for 3-graphs.
\newblock {\em Electron. J. Combin.}, 19(3):Paper 40, 26, 2012.

\bibitem{FSW}
J.~Fox, L.~Sauermann, and F.~Wei.
\newblock On the inducibility problem for random {C}ayley graphs of abelian
  groups with a few deleted vertices, 2019.
\newblock arXiv:1904.07682.

\bibitem{HHNBlowup}
H.~Hatami, J.~Hirst, and S.~Norine.
\newblock The inducibility of blow-up graphs.
\newblock {\em J. Combin. Theory Ser. B}, 109:196--212, 2014.

\bibitem{HT}
D.~Hefetz and M.~Tyomkyn.
\newblock On the inducibility of cycles.
\newblock {\em J. Combin. Theory Ser. B}, 133:243--258, 2018.

\bibitem{Hirst}
J.~Hirst.
\newblock The inducibility of graphs on four vertices.
\newblock {\em J. Graph Theory}, 75(3):231--243, 2014.

\bibitem{Huang14}
H.~Huang.
\newblock On the maximum induced density of directed stars and related
  problems.
\newblock {\em SIAM J. Discrete Math.}, 28(1):92--98, 2014.

\bibitem{KNV}
D.~Kr\'{a}l', S.~Norin, and J.~Volec.
\newblock A bound on the inducibility of cycles.
\newblock {\em J. Combin. Theory Ser. A}, 161:359--363, 2019.

\bibitem{PST}
O.~Pikhurko, J.~Slia\v{c}an, and K.~Tyros.
\newblock Strong forms of stability from flag algebra calculations.
\newblock {\em J. Combin. Theory Ser. B}, 135:129--178, 2019.

\bibitem{PG}
N.~Pippenger and M.~C. Golumbic.
\newblock The inducibility of graphs.
\newblock {\em J. Combinatorial Theory Ser. B}, 19(3):189--203, 1975.

\bibitem{Sim68}
M.~Simonovits.
\newblock A method for solving extremal problems in graph theory, stability
  problems.
\newblock In {\em Theory of {G}raphs ({P}roc. {C}olloq., {T}ihany, 1966)},
  pages 279--319. Academic Press, New York, 1968.

\bibitem{JS}
J.~Slia\v{c}an.
\newblock An updated version of {F}lagmatic.
\newblock {\tt http://jakubsliacan.eu/flagmatic/}, 2017.

\bibitem{CHSurvey}
B.~D. Sullivan.
\newblock A summary of problems and results related to the
  {C}accetta-{H}aggkvist conjecture, 2006.
\newblock arXiv:math/0605646.

\bibitem{EV}
E.~R. Vaughan.
\newblock Flagmatic: A tool for researchers in extremal graph theory, version
  1.5, 2012.

\bibitem{Yuster19}
R.~Yuster.
\newblock On the exact maximum induced density of almost all graphs and their
  inducibility.
\newblock {\em J. Combin. Theory Ser. B}, 136:81--109, 2019.

\end{thebibliography}

\appendices

\section{Proofs of the technical claims}\label{sec:claim}

\begin{claim}\label{A:S}  	
	\begin{equation}\label{eq:S}
	S \geq \frac{k^k\l^\l}{(m+1)^m}.
	\end{equation}
\end{claim}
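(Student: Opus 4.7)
By the hypothesis of \cref{l:upper} and the definition of $S$ given at the end of \cref{s:upper} (where it is stated that $S \geq (m+1)\brm{OPT}(k,\l)$), it suffices to establish the inequality
\[
\brm{OPT}(k,\l) \;\geq\; \frac{k^k\l^\l}{(m+1)^{m+1}},
\]
since then $S \geq (m+1)\brm{OPT}(k,\l) \geq \frac{k^k\l^\l}{(m+1)^m}$. So the plan is to exhibit admissible choices of the parameters $(\alpha,d)$ (respectively $\alpha$, in the symmetric case) in the max defining $\brm{OPT}(k,\l)$ that witness this bound.

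The natural guess, motivated by the extremal construction in \cref{l:lower}, is $\alpha = 1/(m+1)$ and $d = k/m$. In the case $k>\l$, I would simply drop the (nonnegative) second term in the definition of $\brm{OPT}(k,\l)$ and evaluate the first:
\[
\alpha(1-\alpha)^{m}d^k(1-d)^\l
 = \frac{1}{m+1}\cdot\frac{m^m}{(m+1)^m}\cdot\frac{k^k\l^\l}{m^m}
 = \frac{k^k\l^\l}{(m+1)^{m+1}},
\]
which yields the claim. Note that $(\alpha,d)=(1/(m+1),k/m)$ lies in $[0,\tfrac12]\times[0,k/m]$ since $m\geq 2$, so this choice is admissible.

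In the case $k=\l$ (so $m=2k$), take $\alpha = 1/(m+1) = 1/(2k+1)$ and discard the second summand $(1-\alpha)\alpha^{2k}\geq 0$:
\[
\frac{1}{2^{2k}}\cdot\alpha(1-\alpha)^{2k}
 = \frac{1}{2^{2k}}\cdot\frac{1}{2k+1}\cdot\frac{(2k)^{2k}}{(2k+1)^{2k}}
 = \frac{k^{2k}}{(2k+1)^{2k+1}}
 = \frac{k^k\l^\l}{(m+1)^{m+1}},
\]
as required. There is no substantive obstacle here: the entire content of the claim is that evaluating $\brm{OPT}$ at the ``balanced'' point $(\alpha,d)=(1/(m+1),k/m)$ already recovers the stated bound, so the proof reduces to a direct substitution.
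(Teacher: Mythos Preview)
Your proof is correct and follows essentially the same approach as the paper: substitute $\alpha=\tfrac{1}{m+1}$ and $d=\tfrac{k}{m}$ into the definition of $\brm{OPT}(k,\l)$, discard the remaining nonnegative term, and read off the bound. If anything, your treatment is slightly more explicit than the paper's in that you handle the $k=\l$ case separately rather than leaving it implicit.
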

\begin{proof}
For any $(\alpha,d)\in [0,\frac12]\times [0,\frac{k}{k+\ell}]$ we have  $$ S \geq (m+1)\brm{OPT}(k,\l) > (m+1)\alpha(1-\alpha)^{m}d^k(1-d)^\ell.$$ Substituting $\alpha = \frac{1}{m+1}, d = \frac{k}{m}$, we obtain the claimed bound.
\end{proof}

The next claim  immediately follows by substituting \eqref{eq:S} into the definitions of $\lambda_0$ and $\lambda_1$. 

\begin{claim}\label{A:lambdas}  	
	\begin{equation}\label{eq:lambdas}
	\lambda_0 \leq \left( \frac{m+1}{m}\right)^m S \qquad \mathrm{and} \qquad \lambda_1 \leq  \frac{(m+1)^m}{(m-1)^{m-1}}S.
	\end{equation}
\end{claim}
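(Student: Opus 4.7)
The plan is to simply exhibit a feasible point in the optimization problem defining $\brm{OPT}(k,\l)$ that yields the desired lower bound. Since by construction $S \geq (m+1)\brm{OPT}(k,\l)$, it suffices to find explicit values of the optimization parameters for which the objective function equals (or exceeds) $\frac{k^k\l^\l}{(m+1)^{m+1}}$.

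For the case $k>\l$, I would plug $\alpha = \frac{1}{m+1}$ and $d = \frac{k}{m}$ into the expression
\[
\alpha(1-\alpha)^{m}d^{k}(1-d)^{\l} + \frac{(k-1)^{k-1}\l^\l}{(m-1)^{m-1}}(1-\alpha)\alpha^{m}(1-d).
\]
Both are in the allowed range $[0,\tfrac12]\times[0,\tfrac{k}{m}]$ since $m\geq 6$. Dropping the (nonnegative) second term, the first term evaluates to
\[
\frac{1}{m+1}\cdot\frac{m^m}{(m+1)^m}\cdot\frac{k^k\l^\l}{m^m} \;=\; \frac{k^k\l^\l}{(m+1)^{m+1}},
\]
and multiplying through by $(m+1)$ gives the claim.

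For the case $k=\l$ (so $m=2k$), I would plug $\alpha=\frac{1}{m+1}=\frac{1}{2k+1}$ into the expression $\frac{1}{2^{2k}}\bigl(\alpha(1-\alpha)^{2k}+(1-\alpha)\alpha^{2k}\bigr)$. Discarding the second summand and computing gives
\[
\brm{OPT}(k,k)\;\geq\;\frac{1}{2^{2k}}\cdot\frac{(2k)^{2k}}{(2k+1)^{2k+1}}\;=\;\frac{k^{2k}}{(m+1)^{m+1}}\;=\;\frac{k^k\l^\l}{(m+1)^{m+1}},
\]
and again multiplying by $(m+1)$ yields the bound.

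There is no real obstacle here; the only content is noticing the ``natural'' choice $\alpha=\frac{1}{m+1},\ d=\frac{k}{m}$ which effectively makes the first term of the objective the product $\alpha\cdot(1-\alpha)^m\cdot\lambda_0$ in the form that matches the target. Since the second term in the $k>\l$ case is nonnegative, we can simply discard it, which is why a common derivation works uniformly across both cases.
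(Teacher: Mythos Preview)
Your approach is correct and matches the paper's: the paper first establishes $S \geq \frac{k^k\l^\l}{(m+1)^m}$ (this is the preceding Claim~\ref{A:S}, proved via exactly the same substitution $\alpha=\frac{1}{m+1}$, $d=\frac{k}{m}$) and then simply remarks that the two inequalities follow by plugging this into the definitions of $\lambda_0$ and $\lambda_1$. One small presentational point: what your argument actually delivers is the bound $S\geq \frac{k^k\l^\l}{(m+1)^m}$, not yet the stated inequalities; to finish you should explicitly note $\lambda_0=\frac{k^k\l^\l}{m^m}=\left(\frac{m+1}{m}\right)^m\cdot\frac{k^k\l^\l}{(m+1)^m}$ and $\lambda_1=\frac{k^k\l(\l-1)^{\l-1}}{(m-1)^{m-1}}\leq \frac{k^k\l^\l}{(m-1)^{m-1}}=\frac{(m+1)^m}{(m-1)^{m-1}}\cdot\frac{k^k\l^\l}{(m+1)^m}$, the latter using $(\l-1)^{\l-1}\leq \l^{\l-1}$.
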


\begin{claim}\label{A:xdegree}  
 $$D \geq \left(1-\frac{1}{m^3}\right) \frac{m}{m+1}.$$
\end{claim}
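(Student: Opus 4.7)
Pick $x \in X$ with $\d(x) = D$, so the hypothesis of \cref{l:upper} gives $(\m+1)s(x) \ge S$. Combining this with \cref{c:degree} yields
$$ S \;\le\; \lambda_0 D^\m + \lambda_1 D(1-D)^{\m-1}. $$
To reduce this to a condition on $D$ alone, I will use $\lambda_0 \le ((\m+1)/\m)^\m\,S$ from \cref{A:lambdas}, together with the direct ratio computation
$$ \frac{\lambda_1}{\lambda_0} \;=\; \s{\frac{\l-1}{\l}}^{\l-1}\cdot\frac{\m^\m}{(\m-1)^{\m-1}} \;\le\; \m\s{1+\frac{1}{\m-1}}^{\m-1} \;\le\; e\m $$
(with the convention $0^0=1$ when $\l=1$). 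The inequality above then becomes
$$ h(D) \;:=\; D^\m + e\m\, D(1-D)^{\m-1} \;\ge\; \s{\frac{\m}{\m+1}}^{\m}. $$

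The key analytic step is to establish that $h$ is convex on $[1/2, 1]$. A direct computation gives
$$ h''(D) \;=\; \m(\m-1)\left[D^{\m-2}-e(1-D)^{\m-3}(2-\m D)\right]. $$
For $D \ge 2/\m$ the subtracted term is non-positive, so $h''(D) \ge \m(\m-1)D^{\m-2} > 0$. Since $\m \ge 6$ implies $2/\m \le 1/2$, we have $h'' \ge 0$ throughout $[1/2,1]$, and hence $h$ attains its maximum on any sub-interval of $[1/2,1]$ at an endpoint.

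Set $D_0 := (1-1/\m^3)\m/(\m+1)$. It then suffices to verify $h(1/2) < (\m/(\m+1))^\m$ and $h(D_0) < (\m/(\m+1))^\m$; together with $h(D) \ge (\m/(\m+1))^\m$ these force $D \notin [1/2, D_0)$, i.e., $D \ge D_0$. The left endpoint is straightforward: $h(1/2) = (1+e\m)/2^\m < 1/e \le (\m/(\m+1))^\m$, since $e + e^2\m < 2^\m$ holds easily for all $\m \ge 6$.

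The right endpoint is the main technical hurdle. Using $1-D_0 = (1+1/\m^2)/(\m+1)$, one computes
$$ \frac{h(D_0)}{(\m/(\m+1))^\m} \;=\; \s{1-\frac{1}{\m^3}}^{\m} + \frac{e(1-1/\m^3)(1+1/\m^2)^{\m-1}}{\m^{\m-2}} \;\le\; e^{-1/\m^2}+\frac{e^{1+1/\m}}{\m^{\m-2}}, $$
via $(1-x)^\m \le e^{-\m x}$ and $(1+x)^{\m-1} \le e^{x(\m-1)}$ at $x=1/\m^2$. The remaining strict inequality $e^{-1/\m^2}+e^{1+1/\m}/\m^{\m-2} < 1$, using $1-e^{-1/\m^2} \ge 1/\m^2 - 1/(2\m^4)$, reduces to essentially $e^{1+1/\m} < \m^{\m-4}$, which holds for all $\m \ge 6$ as $\m^{\m-4} \ge 36 > e^{7/6}$, with the slack growing rapidly in $\m$.
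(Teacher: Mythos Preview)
Your proof is correct and follows essentially the same approach as the paper: both start from \cref{c:degree}, divide through by $S$ using \cref{A:lambdas}, observe that the resulting function of $D$ is convex for $D \geq 2/m$, and then check the two endpoints $D=1/2$ and $D=D_0$. The only cosmetic difference is that the paper uses the exact ratio $\lambda_1/\lambda_0 = ((\l-1)/\l)^{\l-1} m^m/(m-1)^{m-1}$ (equivalently, the bound $\lambda_1 \le (m+1)^m/(m-1)^{m-1}\,S$ from \cref{A:lambdas}) to obtain $D^m/m^m + D(1-D)^{m-1}/(m-1)^{m-1} \ge 1/(m+1)^m$, whereas you bound $\lambda_1/\lambda_0 \le em$; your inequality is slightly weaker but the endpoint checks still go through comfortably.
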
 

\begin{proof} Let $x \in X$ be such that  $\rho(x) = D$. 
	
	Substituting the upper bounds on $\lambda_0$ and $\lambda_1$ from \cref{A:lambdas} into \eqref{eq:sx} we obtain	\begin{align}\label{eq:xdegree2}
	\frac{D^m}{m^{m}}  + \frac{D(1 - D)^{m-1} }{(m-1)^{m-1}} \geq \frac{1}{(m+1)^{m}}.
	\end{align}	As the left side of \eqref{eq:xdegree2} is convex for $D \geq 2/m$, it suffices to verify that \eqref{eq:xdegree2} does not hold for $D =  (1-\frac{1}{m^3}) \frac{m}{m+1}$ and $D = 1/2$.

	We consider $D =  (1-\frac{1}{m^3}) \frac{m}{m+1}$ first. After substituting the value of $D$ into  \eqref{eq:xdegree2}, multiplying by $(m+1)^m$ and rearranging, it remains to verify that
	$$
	\s{1 -\frac{1}{m^3}}^m + \frac{(1-\frac{1}{m^3})m}{(m-1)^{m-1}} \left(1  + \frac{1}{m^3}\right)^{m-1} < 1.
	$$
	As $$\left(1  + \frac{1}{m^3}\right)^{m-1} < 1+\frac{1}{m^2},\qquad \s{1 -\frac{1}{m^3}}^m \leq 1-\frac{1}{m^2}+\frac{1}{m^4} \qquad \mbox{and} \qquad 1-\frac{1}{m^3}\leq 1, $$ the above is implied by
	$$
    \frac{1}{m^4} + \frac{m^2+1}{m(m-1)^{m-1}} \leq \frac{1}{m^2},
	$$
	which clearly holds for $m \geq 6$.
	\medskip
	
	Next let $D = 1/2$. After multiplying \eqref{eq:xdegree2} by $2^m(m+1)^m$, we need to show
	$$\left(1+\frac{1}{m}\right)^m + (m+1)\left(1+\frac{2}{m-1}\right)^m  < 2^m.$$  
	It is easy to see that it suffices to verify this inequality for $m=6$, which is straightforward.
\end{proof}

Next an easy consequence of the above claim.

\begin{claim}\label{A:Dpower} For  $0 \leq p \leq m-1$,
	$$(1-D)^p < \frac{1}{m(m+1)^{p-1}}.$$	
\end{claim}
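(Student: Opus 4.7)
The plan is to leverage the explicit lower bound $D \geq \bigl(1 - \frac{1}{m^3}\bigr)\frac{m}{m+1}$ from \cref{A:xdegree} and reduce the desired inequality to a sharp Taylor estimate.

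First I would compute
\[ 1 - D \leq 1 - \left(1 - \frac{1}{m^3}\right)\frac{m}{m+1} = \frac{1 + 1/m^2}{m+1} = \frac{m^2+1}{m^2(m+1)}, \]
so that
\[ (1-D)^p \leq \frac{(m^2+1)^p}{m^{2p}(m+1)^p}. \]
Comparing this with $\frac{1}{m(m+1)^{p-1}}$ and cross-multiplying reduces the claim to showing
\[ \left(1 + \frac{1}{m^2}\right)^{p} < 1 + \frac{1}{m} \qquad \text{for all } 0 \leq p \leq m-1. \]
Since the left-hand side is monotonically increasing in $p$, it suffices to check this inequality at $p = m-1$.

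For the remaining inequality $\bigl(1 + \tfrac{1}{m^2}\bigr)^{m-1} < 1 + \tfrac{1}{m}$, I would take logarithms and use standard Taylor estimates for $\ln(1+x)$. On the left, using $\ln(1+x) < x$ for $x > 0$, we get
\[ (m-1)\ln\left(1 + \frac{1}{m^2}\right) < \frac{m-1}{m^2} = \frac{1}{m} - \frac{1}{m^2}. \]
On the right, using $\ln(1+x) > x - x^2/2$ for $x > 0$,
\[ \ln\left(1 + \frac{1}{m}\right) > \frac{1}{m} - \frac{1}{2m^2}. \]
Since $\frac{1}{m} - \frac{1}{m^2} < \frac{1}{m} - \frac{1}{2m^2}$, chaining these bounds gives the inequality for all $m \geq 2$ (and in particular for $m \geq 6$, which is our setting).

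I expect no serious obstacle here: the only care required is in extracting a sufficiently sharp Taylor estimate on both sides (truncating $\ln(1+1/m^2)$ from above and $\ln(1+1/m)$ from below) so that the comparison comes out in the correct direction. The slack of $\frac{1}{2m^2}$ between the two bounds is what the proof exploits.
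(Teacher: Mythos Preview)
Your proof is correct and follows essentially the same approach as the paper: both start from \cref{A:xdegree}, compute $1-D \leq (1+1/m^2)/(m+1)$, and then reduce to showing $(1+1/m^2)^p < 1 + 1/m$. The only cosmetic difference is in verifying this last elementary inequality: the paper bounds $(1+1/m^2)^p \leq e^{p/m^2} \leq e^{1/(m+1)} \leq (m+1)/m$ directly, whereas you take logarithms and compare Taylor estimates, which amounts to the same thing.
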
 
\begin{proof} By \cref{A:xdegree} we have
	\begin{align*}
	(1-D)^{p} &\leq  \frac{1}{(m+1)^p}\s{1+\frac{1}{m^2}}^p
	\leq \frac{\exp\s{\frac{p}{m^2}}}{(m+1)^p} \leq \frac{\exp\s{\frac{1}{m+1}}}{(m+1)^p} \leq \frac{1}{m(m+1)^{p-1}}.
	\end{align*}
\end{proof}	

We are now ready to prove \cref{c:alphaU}, that is to show $\x \leq 1/m$.

\begin{proof}[Proof of \cref{c:alphaU}]
	By \eqref{e:alpha},  
$$
	\lambda_0(1-\alpha)^m + \lambda_1(1+\alpha)(1-D)^{m-1}\geq S.
$$
	Substituting the upper bounds on $\lambda_0$ and $\lambda_1$ from \cref{A:lambdas}, and on $(1-D)^{m-1}$ from \cref{A:Dpower}, we obtain
		\begin{equation}\label{e:alphaU1}\fs{m+1}{m}^m(1-\alpha)^m + \frac{(m+1)^2}{m(m-1)^{m-1}}(1 +\alpha) \geq 1.	\end{equation}
 As the left side of \eqref{e:alphaU1} clearly decreases with $\alpha$ for $\alpha \leq 1/m$, it suffices to show that  \eqref{e:alphaU1} does not hold for $\alpha = 1/m$, i.e.\@ that 
	$$\s{1-\frac{1}{m^2}}^m + \frac{(m+1)^3}{m^2(m-1)^{m-1}} < 1.$$
	Using the bound $\s{1-\frac{1}{m^2}}^m \leq e^{-1/m} < 1-\frac{1}{m+1}$, we reduce the above to
	$$(m+1)^4 < m^2(m-1)^{m-1},$$
	which comfortably holds for $m \geq 6$.
\end{proof}

\begin{claim}\label{A:beta} 
	$$\beta <\s{1+\frac{2}{m(m-1)^3}}\frac{1}{m+1}.$$
\end{claim}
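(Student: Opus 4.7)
The plan is to argue by contradiction: assume $\beta \geq \beta^* := \frac{1+\eta}{m+1}$ with $\eta := \frac{2}{m(m-1)^3}$, and derive a contradiction with \eqref{e:beta}. Since $y \in Y$ implies $\rho(y) < 1/2$ and $\beta = \rho_Y(y) \leq \rho(y)$, we have $\beta < 1/2$, hence $\beta \in [\beta^*, 1/2)$. I will show both alternatives of \eqref{e:beta} fail throughout this interval, contradicting \cref{c:B}.

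The second alternative has a left side that is monotone increasing in $\beta$, so it suffices to check it at $\beta = 1/2$. The $(1-D)^{m-2}$ term there is negligible by \cref{A:Dpower}, and the main term $(\lambda_0+\lambda_1)/2^m$ is bounded via \cref{A:lambdas} by $\bigl[\bigl(\tfrac{m+1}{2m}\bigr)^m + (m-1)\bigl(\tfrac{m+1}{2(m-1)}\bigr)^m\bigr]S$, which is strictly less than $S$ at $m=6$ (a direct numerical check gives roughly $0.63\,S$) and improves for larger $m$ by monotonicity.

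For the first alternative, let $g_I(\beta)$ denote its left side with $\alpha$ held as a parameter. Inspection of $g_I'$ shows $g_I$ is U-shaped in $\beta$ (its second derivative is nonnegative, while $g_I'(0)<0$ and $g_I'$ is eventually positive), so $g_I(\beta) \leq \max\{g_I(\beta^*), g_I(1/2)\}$ on $[\beta^*, 1/2]$. The bound $g_I(1/2) < S$ follows from the analysis of the second alternative plus the extra term $\lambda_1\alpha(1/2-\alpha)^{m-1}$, which maximized over $\alpha \leq 1/m$ at $\alpha = 1/(2m)$ contributes at most $\bigl(\tfrac{m+1}{2m}\bigr)^m S$, still leaving the total below $S$ for $m \geq 6$.

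The critical bound is $g_I(\beta^*) < S$. Since $\alpha(1-\alpha-\beta^*)^{m-1}$ is maximized on $[0, 1/m]$ at $\alpha = (1-\beta^*)/m \leq 1/m$ with value $(m-1)^{m-1}(1-\beta^*)^m/m^m$, and using \cref{A:lambdas} together with the identity $(m+1)(1-\beta^*)/m = 1 - \eta/m$, the dominant term is bounded by $S(1-\eta/m)^m \leq Se^{-\eta} \leq S(1-\eta/2)$ (valid since $\eta \leq 1$). The remaining two terms $\lambda_1\alpha(1-D)^{m-1}$ and $(\lambda_0+\lambda_1)(\beta^*)^m$ are bounded, via \cref{A:lambdas} and \cref{A:Dpower}, by $S(m+1)^2/(m^2(m-1)^{m-1})$ and by $S\bigl[(1+\eta)^m/m^m + (1+\eta)^m/(m-1)^{m-1}\bigr]$ respectively; for $m \geq 6$ both are comfortably smaller than $S\eta/2 = S/(m(m-1)^3)$, yielding $g_I(\beta^*) < S$. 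The main obstacle is verifying that these error terms fit within the tight budget $\eta/2$; the inequality is tightest at $m=6$, where an explicit numerical check still leaves roughly $40\%$ slack, and it becomes exponentially easier as $m$ grows.
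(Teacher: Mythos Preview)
Your proof is correct and follows essentially the same approach as the paper: rule out the second alternative of \cref{c:B} by a direct bound, then use convexity of the first alternative in $\beta$ (together with the optimization $\alpha(1-\alpha-\beta)^{m-1}\le\frac{(m-1)^{m-1}}{m^m}(1-\beta)^m$ and the bounds from \cref{A:lambdas}, \cref{A:Dpower}, \cref{c:alphaU}) to reduce to checking the two endpoints $\beta^*$ and $1/2$. The paper's presentation differs only cosmetically---it first eliminates $\alpha$ to obtain a single convex function of $\beta$ before checking endpoints---but the mathematical content is the same.
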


\begin{proof}
	Using the upper bounds on $\lambda_0$ and $\lambda_1$ from \cref{A:lambdas}, as well as upper bounds on powers of $1-D$ and on $\alpha$ from \cref{A:Dpower} and \cref{c:alphaU}, respectively, we see that \cref{c:B} implies  that  either
	
		\begin{align} 
		&\frac{(m+1)^m}{(m-1)^{m-1}} \alpha (1-\alpha - \beta)^{m-1} + \frac{(m+1)^2}{m^2(m-1)^{m-1}} +\s{\frac{(m+1)^m}{(m-1)^{m-1}} + \frac{(m+1)^m}{m^m}} \beta^{m}  \geq 1, \qquad \mathrm{or}  \label{e:beta3} \\
		&\s{\frac{(m+1)^m}{(m-1)^{m-1}} + \frac{(m+1)^m}{m^m}} \beta^{m}  +  \frac{(m+1)^3}{m k^k\l^\l}\frac{(m-1)}{\binom{m}{k}}\s{\frac{1}{m} + 
		\beta} \geq 1. \label{e:beta2} 
		\end{align}	
		
	As $\beta \leq 1/2$, we have
	\begin{align*}
	\frac{(m+1)^3}{m k^k\l^\l}\frac{(m-1)}{\binom{m}{k}}\s{\frac{1}{m} + 
		\beta} &\leq \frac{k!}{k^k}\frac{(m+1)^3(m+2)(m-1)}{2 m^2 \cdot m!} \leq \frac{2}{9}\frac{(m+1)^3}{2m!} \leq \frac{343}{6480} \leq 0.1
	\end{align*}
	and
	\begin{equation}\label{e:betam}
	\s{\frac{(m+1)^m}{(m-1)^{m-1}} + \frac{(m+1)^m}{m^m}} \beta^{m}  \leq \frac{e^2(m+1)+e}{2^m} \leq \frac{55}{64} \leq 0.7.
	\end{equation}

Thus \eqref{e:beta2} does not hold, and so \eqref{e:beta3} holds. 
	By \cref{l:xayb} we have
	$$ \alpha(1-\alpha - \beta)^{m-1} \leq \frac{(m-1)^{m-1}}{m^m}(1- \beta)^m.$$
	and so
	\begin{equation}\label{e:Abeta}
\left(\frac{m+1}{m}\right)^m(1-\beta)^m+  \frac{(m+1)^2}{m^2(m-1)^{m-1}}	+ \left(\left(\frac{m+1}{m}\right)^m+ \frac{(m+1)^m}{(m-1)^{m-1}}\right) \y^{m}  \geq 1.
	\end{equation}
	As the left side of \eqref{e:Abeta}  is convex, and it is easy to show using \eqref{e:betam} that \eqref{e:Abeta} does not hold for $\beta=1/2$, it suffices to show that  \eqref{e:Abeta}  also does not hold for $\beta = (1+m\eps)/(m+1)$, where $\eps= \frac{2}{m^2(m-1)^3}$. As $1 - \beta = (1-\eps)\frac{m}{m+1}$,  we rewrite  \eqref{e:betam} as
\begin{equation}\label{e:Abeta2}(1-\eps)^m + \frac{(m+1)^2}{m^2(m-1)^{m-1}} +\left(
\frac{1}{m^m}+ \frac{1}{(m-1)^{m-1}}\right)(1+m\eps)^m \geq 1.	\end{equation}
	As $\eps \leq 1/m^3$ we have $(1+m\eps)^m < 1+(m+1)m \eps$ and $(1 -\eps)^m \leq 1 - (m-1)\eps$, so
	\eqref{e:Abeta2} implies 
	$$\s{m-1 - \frac{m(m+1)}{m^m} - \frac{m(m+1)}{(m-1)^{m-1}}}\eps  \leq \frac{(m+1)^2}{m^2(m-1)^{m-1}} + \frac{1}{m^m}+ \frac{1}{(m-1)^{m-1}}, $$
	which in turn implies
	$$
	(m-2)\eps < \frac{2(m+1)^2}{m^2(m-1)^{m-1}}< \frac{2(m-2)(m-1)^2}{m^2(m-1)^{m-1}} \leq \frac{2(m-2)}{(m-1)^3} ,
	$$
	contradicting our choice of $\eps$.
\end{proof}

\begin{claim}\label{A:alphaL} 
		$$\left(1 - \frac{2}{(m-1)^{2}} \right)\frac{1}{m+1}\leq \alpha.$$
\end{claim}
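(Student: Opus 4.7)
My plan follows the template of the proofs of \cref{c:alphaU} and \cref{A:beta}. Starting from inequality~\eqref{e:AL} of \cref{c:AL},
\[
\lambda_0 \x (1-\x)^m + \lambda_1 \x (1-D)^{m-1} + \lambda_0(1-\x)\y^m \geq \frac{S}{m+1},
\]
I would assume for contradiction that $\x < \x_0 := (1-\delta)/(m+1)$ with $\delta := 2/(m-1)^2$, and derive a contradiction by showing that the LHS is strictly smaller than $S/(m+1)$ under this hypothesis.

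First I would divide the inequality through by $k^k\l^\l/(m+1)^{m+1}$. Using $S \geq k^k\l^\l/(m+1)^m$ from \cref{A:S}, $\lambda_0 = k^k\l^\l/m^m$, and $\lambda_1 \leq k^k\l^\l/(m-1)^{m-1}$ (the latter from $\l(\l-1)^{\l-1} \leq \l^\l$), then bounding the second term by $\x \leq 1/m$ from \cref{c:alphaU} and $(1-D)^{m-1} \leq 1/(m(m+1)^{m-2})$ from \cref{A:Dpower}, and the third by $\y \leq \s{1 + \frac{2}{m(m-1)^3}}/(m+1)$ from \cref{A:beta}, I reduce the inequality to
\[
\frac{(m+1)^{m+1}}{m^m}\x(1-\x)^m + \frac{(m+1)^3}{m^2(m-1)^{m-1}} + \frac{(m+1)\s{1+\frac{2}{m(m-1)^3}}^m}{m^m} \geq 1.
\]
Since $\x \mapsto \x(1-\x)^m$ is increasing on $[0, 1/(m+1)]$ and $\x_0 < 1/(m+1)$, the contradiction hypothesis combined with the Taylor expansion $\log\s{(1-\delta)(1+\delta/m)^m} = -\frac{(m+1)\delta^2}{2m} - O(\delta^3)$ gives
\[
\frac{(m+1)^{m+1}}{m^m}\x(1-\x)^m < (1-\delta)\s{1 + \frac{\delta}{m}}^m \leq 1 - \frac{(m+1)\delta^2}{2m} + O\s{\delta^4}.
\]
Substituting $\delta = 2/(m-1)^2$, the contradiction reduces to verifying
\[
\frac{(m+1)^3}{m^2(m-1)^{m-1}} + \frac{(m+1)\s{1+\frac{2}{m(m-1)^3}}^m}{m^m} < \frac{2(m+1)}{m(m-1)^4} - O\s{\frac{1}{(m-1)^8}},
\]
which holds comfortably for $m \geq 6$: both summands on the LHS decay super-polynomially (like $(m-1)^{-(m-2)}$ and $m^{-m}$, respectively), whereas the RHS is $\sim 2/(m-1)^4$.

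The principal difficulty is the borderline case $m=6$, where both sides are of comparable magnitude (each roughly $3\cdot 10^{-3}$). The cruder bound $(1-\delta)e^\delta \leq 1 - \delta^2/2$, corresponding to $(1+\delta/m)^m \leq e^\delta$, turns out to be just barely insufficient here, so I would retain the sharper estimate $(1+\delta/m)^m \leq e^{\delta - \delta^2/(2m)}$ to extract the extra factor $\frac{m+1}{m}$ needed in the quadratic-in-$\delta$ savings for the direct numerical check.
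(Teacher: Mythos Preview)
Your approach is essentially the paper's: start from \eqref{e:AL}, substitute the bounds on $\lambda_0,\lambda_1,(1-D)^{m-1},\beta$ from Claims~\ref{A:lambdas}, \ref{A:Dpower}, \ref{A:beta}, use the monotonicity of $\alpha(1-\alpha)^m$ on $[0,1/(m+1)]$, and verify failure at $\alpha_0=(1-\delta)/(m+1)$. The one tactical difference is that you insert $\alpha\le 1/m$ into the second error term before passing to the boundary, whereas the paper keeps $\alpha$ and substitutes $\alpha\le 1/(m+1)$ there; this loses exactly the factor $(m+1)/m$ you identify, so in the paper the cruder bound $(1-\delta)e^{\delta}\le 1-\delta^2/2$ already suffices at $m=6$, while your version needs the refined estimate $(1+\delta/m)^m\le e^{\delta-\delta^2/(2m)}$ (and explicit constants in place of the $O(\cdot)$ terms) to close the numerical gap.
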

\begin{proof}
	By \cref{A:beta} we have $$\beta^m \leq \s{1+\frac{1}{m(m+1)}}^m\frac{1}{(m+1)^m} \leq \frac{\exp\fs{1}{m+1}}{(m+1)^m} \leq \frac{1}{m(m+1)^{m-1}}.$$ Substituting this bound, as well as the bounds on $\lambda_0,\lambda_1$ and $(1-D)^{m-1}$ from \cref{A:lambdas} and  \cref{A:Dpower} into \eqref{e:AL}, we obtain
	\begin{equation}\label{e:AAL}
	\frac{(m+1)^{m+1}}{m^m} \x (1-\x)^{m} + \frac{(m+1)^{3}}{m(m-1)^{m-1}}\x + 	\frac{(m+1)^{2}}{m^{m+1}}(1-\x) \geq 1.
	\end{equation}
	As the left part of \eqref{e:AAL} is increasing for $\x \leq \frac{1}{m+1}$, it suffices to show that \eqref{e:AAL} does not hold for $\x = \left(1 - \frac{2}{(m-1)^{2}} \right)\frac{1}{m+1}$. Let $\eps =  \frac{2}{(m-1)^{2}}$. We upper bound different terms in  \eqref{e:AAL} as follows:
	\begin{align*}
	\frac{(m+1)^{m+1}}{m^m}& \x (1-\x)^{m} = (1-\eps)\s{1+\frac{\eps}{m}}^m \\  &\leq (1-\eps)e^{\eps} \leq (1-\eps)\s{1 + \eps +\frac{\eps^2}{2}+\frac{\eps^3}{2}} \leq  1 - \frac{\eps^2}{2} = 1 - \frac{2}{(m-1)^4} ,
	\end{align*} where the third inequality holds as $\eps \leq 1$, and
	\begin{align*}
	\frac{(m+1)^{3}}{m(m-1)^{m-1}}\x + 	\frac{(m+1)^{2}}{m^{m+1}}(1-\x) \leq \frac{(m+1)^{2}}{m(m-1)^{m-1}}+\frac{m+1}{m^{m}}.
		\end{align*}
	Plugging these bound in \eqref{e:AAL} yields
	\begin{align*}
	 0 \leq \frac{(m+1)^{2}}{m(m-1)^{m-1}} + \frac{(m+1)}{m^{m}} - \frac{2}{(m-1)^4}  \leq \frac{1}{(m-1)^{m-2}} \s{\frac{(m+1)^{2}}{m(m-1)} + \frac{(m+1)(m-1)^{m-2}}{m^{m}} - 2},
	\end{align*}
 where the last inequality holds, as $\frac{(m+1)^{2}}{m(m-1)} + \frac{(m+1)(m-1)^{m-2}}{m^{m}}$ decreases with $m$ and evaluates to about $1.727$ for $m=6$. Thus, as desired, \eqref{e:AAL} does not hold for our choice of $\eps$.
\end{proof}

\begin{claim}\label{A:S1} 
	$$S_1 \geq \frac{44}{45}S.$$
\end{claim}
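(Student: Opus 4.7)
The plan is to combine Claim~\ref{c:AM} with the quantitative estimates on $\lambda_1$, $(1-D)^{m-1}$, and $\alpha$ developed earlier in the appendix. Concretely, the last inequality in \eqref{e:alpha} gives
$$ S_1 \geq S - \lambda_1(1+\alpha)(1-D)^{m-1}, $$
so it suffices to show that the error term is at most $S/45$. I would bound it by applying Claim~\ref{A:lambdas} to $\lambda_1$, Claim~\ref{A:Dpower} with $p=m-1$ to $(1-D)^{m-1}$, and Claim~\ref{c:alphaU} (via $1+\alpha \leq (m+1)/m$) to the $(1+\alpha)$ factor. This yields
$$ \lambda_1(1+\alpha)(1-D)^{m-1} \leq \frac{(m+1)^m}{(m-1)^{m-1}}\cdot \frac{m+1}{m}\cdot \frac{1}{m(m+1)^{m-2}}\cdot S = \frac{(m+1)^3}{m^2(m-1)^{m-1}}\cdot S. $$
Then I need the numerical inequality $\frac{(m+1)^3}{m^2(m-1)^{m-1}} \leq \frac{1}{45}$ for all $m \geq 6$.

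For the numerical inequality I would verify the base case $m=6$ directly: $\frac{7^3}{36\cdot 5^5}=\frac{343}{112500}<\frac{1}{45}$. For the induction (or monotonicity) step, the ratio $\frac{(m+1)^3}{m^2(m-1)^{m-1}}$ is easily seen to be decreasing in $m$ for $m \geq 6$, since when $m$ is replaced by $m+1$ the numerator grows by a factor at most $(1+1/m)^3 \leq 2$, while the denominator grows by a factor of at least $(m-1)\cdot\bigl(\frac{m}{m-1}\bigr)^{m-1}\bigl(\frac{m+1}{m}\bigr)^2$, which is far larger than $2$ already at $m=6$.

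Combining, one obtains $S_1 \geq (1 - 1/45)S = \frac{44}{45}S$, as claimed. There is no real obstacle here: the proof is a short calculation once one packages Claims~\ref{c:AM}, \ref{A:lambdas}, \ref{A:Dpower} and \ref{c:alphaU}, and the only mild care required is in checking that the base case $m=6$ already gives slack against the constant $1/45$ rather than a weaker constant.
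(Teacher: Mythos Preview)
Your proposal is correct and follows essentially the same approach as the paper: use the last inequality in \eqref{e:alpha}, bound $\lambda_1$ via Claim~\ref{A:lambdas}, $(1-D)^{m-1}$ via Claim~\ref{A:Dpower}, and $1+\alpha$ via Claim~\ref{c:alphaU}, then check the resulting numerical inequality at $m=6$ and invoke monotonicity. The only difference is cosmetic: the paper's computation records the slightly weaker bound $\frac{(m+1)^4}{m^2(m-1)^{m-1}}$ (an extra factor of $m+1$), which still satisfies the required $\leq \tfrac{1}{45}$ at $m=6$, so both versions go through.
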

\begin{proof} By \eqref{e:alpha}, plugging the previously obtained bounds we have
	\begin{align*}
\frac{	S_1}{S} &\geq 1 -  \frac{\lambda_1}{S}(1+\x)(1-D)^{m-1} \\&\geq 1 - \frac{(m+1)^{m+1}}{(m-1)^{m-1}}\frac{m+1}{m}\frac{1}{m(m+1)^{m-2}} = 1 - \frac{(m+1)^4}{m^2(m-1)^{m-1}} \geq \frac{44}{45},
	\end{align*}
	where the last inequality holds for $m=6$, and so for all $m \geq 6$.
\end{proof}

\begin{proof}[Proof of \cref{c:gamma}] We have
	\begin{align*}
	\gamma^\l(1-\x - \gamma)^k \leq \frac{\gamma}{\x + \gamma} \s{(\gamma+\x)^\l(1-\x - \gamma)^k} \leq \frac{k^k\l^\l}{m^m}\frac{\gamma}{\x + \gamma}.
	\end{align*}
 On the other hand, by \eqref{e:alpha} and Claims \ref{A:S} and \ref{A:S1},
 \begin{align*}
 \gamma^\l(1-\x - \gamma)^k \geq S_1 \geq   \frac{44}{45}S \geq \frac{44}{45}\frac{k^k\l^\l}{(m+1)^{m}} \geq \s{\frac{44}{45}\fs{m}{m+1}^m} \frac{k^k\l^\l}{m^{m}} \geq \frac{1}{3}\frac{k^k\l^\l}{m^{m}}.
 \end{align*}
 It follows that $3 \gamma \geq \x + \gamma$, as desired.
\end{proof}

Having estimated all the quantities present in the statements of Claims~\ref{c:S2} and~\ref{c:final}, we are now ready to prove them.
\begin{proof}[Proof of \cref{c:S2}] 
By Claims~\ref{c:alphaU}, \ref{A:S}, \ref{A:Dpower} and \ref{A:beta}, we have
	\begin{align*}
	\frac{1}{S} &\frac{(m-1)}{(m+1)\binom{m}{k}}(\alpha+\beta)(1-\alpha)(1-D)^{m-2} \\ &\leq \frac{k!}{k^k}\frac{(m-1)(m+1)^{m-1}}{m!}\cdot \frac{2}{m}\cdot \frac{1}{m(m+1)^{m-3}} \\ &\leq 
	\s{ \frac{2}{9}\cdot \frac{2(m+1)^2}{m^2}} \frac{m-1}{m!} \leq \frac{1}{144}.
	\end{align*}
	It follows from \cref{A:S1} that
	\begin{align*}
	\frac{S_2}{S} = \frac{S_1}{S} - \frac{1}{S} &\frac{(m-1)}{(m+1)\binom{m}{k}}(\alpha+\beta)(1-\alpha)(1-D)^{m-2}  \geq \frac{44}{45}-\frac{1}{144} \geq \frac{32}{33}.
	\end{align*}
	On the other hand.	
	\begin{align*}
	\frac{\lambda_1}{S} \frac{1 - \alpha}{\alpha}\beta^{m-1} \leq \frac{(m+1)^{m}}{(m-1)^{m-1}} \frac{(m+1)}{1 -\frac{2}{(m-1)^2}}\frac{1}{m(m+1)^{m-2}} \leq \frac{(m+1)^3}{(m-1)^{m}} \leq 0.03,
	\end{align*}	
	where the last inequality holds for $m=6$ and so for all $m$. Therefore \cref{c:S2} holds.
\end{proof}

\begin{proof}[Proof of \cref{c:final}]
The statement of \cref{c:final} can be rewritten as	
		\begin{equation}\label{e:final2}
		\s{1 - \frac{k^k}{(k+1)^{k+1}}\frac{\x}{\gamma}}\fs{\x}{\y}^{m} \geq  \frac{1}{2}.
		\end{equation}
As $k \geq 5$, and $\x/\gamma \leq 2$ by \cref{c:gamma}, we have
 	$$1 - \frac{k^k}{(k+1)^{k+1}}\frac{\x}{\gamma} \geq \frac{20203}{23328}.$$
 Meanwhile, by Claims~\ref{A:beta} and~\ref{A:alphaL}, we have
 	\begin{align*} 	
 	\frac{\x}{\y} \geq \frac{1 - \frac{2}{(m-1)^{2}}}{1+\frac{2}{m(m-1)^3}} \geq 1 - \frac{2}{(m-1)^{2}}-\frac{2}{m(m-1)^3} \geq 1 -\frac{2}{m(m-2)}.
 	\end{align*}
Therefore 
$$
   	\fs{\x}{\y}^{m} \geq	\s{1-\frac{2}{m(m-2)}}^m \geq \exp\s{-\frac{m}{m(m-2)/2-1}} \geq e^{-\frac{6}{11}}.
$$
   	Combining these estimates we have
   	$$\s{1 - \frac{k^k}{(k+1)^{k+1}}\frac{\x}{\gamma}}\fs{\x}{\y}^{m} \geq  \frac{20203}{23328}e^{-\frac{6}{11}} \geq \frac{1}{2}, $$
   	and so \eqref{e:final2} holds.	
\end{proof}

\section{Flagmatic}\label{sec:fm}
Here we use Flagmatic maintained by Slia\v can~\cite{JS}. First install Sage~\cite{sage}, then download Flagmatic from \url{https://github.com/jsliacan/flagmatic}, which also contains directions on how to install and run it. The following code will give a numerical result for $S_{2,1}$.

\begin{verbatim}
from flagmatic.all import *
P = OrientedGraphProblem(5,density="4:121341")
P.solve_sdp(solver="csdp")
\end{verbatim}

The output would be 0.2025, which is the same as the conclusion from Theorem~\ref{Thm:main} for $k=2$ and $\l=1$, where the maximum is achieved by $(\x,d) = (3/10,9/14)$.

\end{document}